\definecolor{vio}{rgb}{0.54, 0.17, 0.89}
\newtheorem{theorem}{Theorem}[section]
\newtheorem{lemma}[theorem]{Lemma}
\newtheorem{proposition}[theorem]{Proposition}
\newtheorem{conjecture}[theorem]{Conjecture}
\newtheorem{corollary}[theorem]{Corollary}
\numberwithin{equation}{section}
\theoremstyle{remark}
\newtheorem*{remark}{Remark}
\DeclareMathOperator{\PP}{\mathcal{P}}
\def\reals{\hbox{\rm I\kern-.18em R}}
\def\complexes{\hbox{\rm C\kern-.43em
\vrule depth 0ex height 1.4ex width .05em\kern.41em}}
\def\field{\hbox{\rm I\kern-.18em F}} 
\let\svthefootnote\thefootnote
\newcommand\freefootnote[1]{%
  \let\thefootnote\relax%
  \footnotetext{#1}%
  \let\thefootnote\svthefootnote%
}
\newenvironment{section*}[2][A]{
  \section*{#2}
  \renewcommand\thesection{#1}
  \setcounter{theorem}{0}}{}
\begin{document}

\title[The sum of a prime power and an almost prime]{The sum of a prime power and an almost prime}

\author{Daniel R. Johnston and Simon N. Thomas}
\address{School of Science, UNSW Canberra, Australia}
\email{daniel.johnston@unsw.edu.au}
\address{School of Mathematics and Physics, The University of Queensland}
\email{simon.thomas1@uq.edu.au}
\date\today

\begin{abstract}
    For any fixed $k\geq 2$, we prove that every sufficiently large integer can be expressed as the sum of a $k$th power of a prime and a number with at most $M(k)=6k$ prime factors. For sufficiently large $k$ we also show that one can take $M(k)=(2+\varepsilon)k$ for any $\varepsilon>0$, or $M(k)=(1+\varepsilon)k$ under the assumption of the Elliott--Halberstam conjecture. Moreover, we give a variant of this result which accounts for congruence conditions and strengthens a classical theorem of Erd\H{o}s and Rao. The main tools we employ are the weighted sieve method of Diamond, Halberstam and Richert, bounds on the number of representations of an integer as the sum of two $k$th powers, and results on $k$th power residues. We also use some simple computations and arguments to conjecture an optimal value of $M(k)$, as well as a related variant of Hardy and Littlewood's Conjecture H.
\end{abstract}

\maketitle

\freefootnote{\textit{Corresponding author}: Daniel Johnston (daniel.johnston@unsw.edu.au).}
\freefootnote{\textit{Affiliation}: School of Science, The University of New South Wales Canberra, Australia.}
\freefootnote{\textit{Key phrases}: Goldbach's conjecture, Chen's theorem, sieve methods, almost primes.}
\freefootnote{\textit{2020 Mathematics Subject Classification}: 11N36, 11P32 (Primary) 11A15, 11D59 (Secondary)}

\section{Introduction}
In this paper we are interested in representing integers $N\geq 1$ as
\begin{equation}\label{mainrepeq}
    N=p^k+\eta,
\end{equation}
where $p$ is prime, $k\geq 1$ is fixed, and $\eta$ has a uniformly bounded number of prime factors. That is, $\Omega(\eta)\leq M(k)$, where $\Omega(\eta)$ is the number of prime factors of $\eta$ counting multiplicity, and $M(k)$ is only a function of $k$. Historically, the case $k=1$ has been of significant interest due to its close link to Goldbach's conjecture. Here, we focus on the case $k\geq 2$.

\subsection{Overview of existing literature}\label{overviewsect}
We refer the reader to any of the recent surveys \cite{kumchev2005invitation,vaughan2016goldbach,li2023some} on Goldbach's conjecture and additive number theory for a broad history of similar problems and results. However, in what follows, we highlight some of the most relevant results in our setting.

To begin with, we recall that in 1938 Hua \cite{hua1938some} proved the following as an application of the circle method.
\begin{theorem}[Hua]\label{huathm}
    Let $k\geq 1$, $X\geq 1$ and consider positive integers $N\leq X$ with  $N\not\equiv 1\pmod{q}$ for any prime $q$ with $q-1\mid k$. Then, with at most $o(X)$ exceptions, $N$ can be expressed as
    \begin{equation*}
        N=p_1^k+p_2,
    \end{equation*}
    where $p_1$ and $p_2$ are prime.
\end{theorem}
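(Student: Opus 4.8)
The natural approach is the Hardy--Littlewood circle method. Writing $e(x)=e^{2\pi i x}$ and letting $\Lambda$ denote the von Mangoldt function, I would introduce the exponential sums
\[
S(\alpha)=\sum_{m\le X^{1/k}}\Lambda(m)e(\alpha m^k),\qquad T(\alpha)=\sum_{n\le X}\Lambda(n)e(\alpha n),
\]
the key point being that their cutoffs depend only on $X$, not on $N$. By orthogonality the weighted representation count satisfies
\[
R(N)=\sum_{m^k+n=N}\Lambda(m)\Lambda(n)=\int_0^1 S(\alpha)T(\alpha)e(-\alpha N)\,d\alpha
\]
for every $N\le X$. The terms with $m$ or $n$ a proper prime power contribute negligibly, so a positive lower bound for $R(N)$ yields a genuine representation $N=p_1^k+p_2$, and passing between the $\Lambda$-weighted count and the count of primes is routine. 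I would then dissect $[0,1]$ into major arcs $\mathfrak{M}$ (neighbourhoods of rationals $a/q$ with $q$ small) and minor arcs $\mathfrak{m}$.

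On the major arcs I would approximate $S$ and $T$ via the prime number theorem in arithmetic progressions, producing the expected main term $\mathfrak{S}(N)\,J(N)$, where $J(N)\asymp N^{1/k}$ is the singular integral and $\mathfrak{S}(N)=\prod_q\sigma_q(N)$ is the singular series. The heart of this step is the lower bound $\mathfrak{S}(N)\gg 1$, and this is exactly where the congruence hypothesis enters. The factor $\sigma_q(N)$ measures the residues $a,b$ modulo $q$ with $q\nmid ab$ and $a^k+b\equiv N\pmod q$; for a fixed admissible $a$ one needs $a^k\not\equiv N\pmod q$. When $q-1\mid k$, Fermat's little theorem gives $a^k\equiv 1\pmod q$ for every $a$ with $q\nmid a$, so an admissible $a$ exists if and only if $N\not\equiv 1\pmod q$; for all remaining $q$ at least $q-1-\gcd(k,q-1)>0$ residues are admissible. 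Thus $\sigma_q(N)>0$ for every $q$ precisely under the stated condition, and since square-root cancellation in the associated Gauss sums gives $\sigma_q(N)=1+O(q^{-3/2})$ the product converges to a quantity bounded away from $0$. Hence the major arcs contribute $\gg N^{1/k}$ for all $N$ satisfying the hypothesis.

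For the minor arcs I would not attempt a pointwise bound, but a mean-square estimate, which is what produces an almost-all statement. Setting $E(N)=\int_{\mathfrak m}S(\alpha)T(\alpha)e(-\alpha N)\,d\alpha$, Bessel's inequality gives
\[
\sum_{N\le X}\abs{E(N)}^2\le\int_{\mathfrak m}\abs{S(\alpha)}^2\abs{T(\alpha)}^2\,d\alpha\le\Big(\sup_{\alpha\in\mathfrak m}\abs{S(\alpha)}\Big)^2\int_0^1\abs{T(\alpha)}^2\,d\alpha\ll\Big(\sup_{\alpha\in\mathfrak m}\abs{S(\alpha)}\Big)^2 X\log X,
\]
using $\int_0^1\abs{T(\alpha)}^2\,d\alpha=\sum_{n\le X}\Lambda(n)^2\ll X\log X$. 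A Weyl/Vinogradov estimate for the prime sum then furnishes $\sup_{\mathfrak m}\abs{S(\alpha)}\ll X^{1/k-\delta}$ for some $\delta>0$, whence $\sum_{N\le X}\abs{E(N)}^2\ll X^{1+2/k-2\delta}\log X$. Restricting to $N\in(X/2,X]$, where the main term is $\asymp X^{1/k}$, Chebyshev's inequality shows that the number of such $N$ with $\abs{E(N)}$ large enough to cancel the main term is $\ll X^{1-2\delta}\log X=o(X)$; a dyadic decomposition extends this to all $N\le X$. For every remaining $N$ satisfying the congruence condition one has $R(N)\ge\tfrac12\mathfrak{S}(N)J(N)>0$, giving the representation.

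The main obstacle is the minor-arc input: obtaining the uniform power-saving bound $\sup_{\mathfrak m}\abs{S(\alpha)}\ll X^{1/k-\delta}$ for the exponential sum $\sum_{p}(\log p)e(\alpha p^k)$ over primes. This requires combining Weyl differencing (or, for large $k$, Vinogradov's mean value theorem) with a bilinear decomposition of $\Lambda$ through Vaughan's identity to accommodate the prime restriction, together with Dirichlet approximation on $\mathfrak m$. The remaining ingredients---the singular-series lower bound and the mean-square bookkeeping---are comparatively standard once this estimate is secured.
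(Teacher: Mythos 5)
The paper itself offers no proof of Theorem \ref{huathm}; it is quoted from Hua's 1938 paper, which establishes it by exactly the circle-method strategy you describe. So your overall architecture --- orthogonality, major/minor arc dissection, Bessel's inequality plus Chebyshev to produce the almost-all statement, and Hua's power-saving minor-arc bound for $\sum_{m}\Lambda(m)e(\alpha m^k)$ --- is the right one, and your treatment of the minor arcs and of the exceptional-set bookkeeping is sound.

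However, your major-arc analysis contains a genuine error. The claim $\sigma_q(N)=1+O(q^{-3/2})$, and with it the conclusion that $\mathfrak{S}(N)\gg 1$ uniformly over all $N$ satisfying the congruence hypothesis, is false. Writing $\rho_q(N)$ for the number of solutions of $x^k\equiv N\pmod q$ with $q\nmid x$, the exact local computation gives
\[
\sigma_q(N)=\frac{q\left(q-1-\rho_q(N)\right)}{(q-1)^2}=1+\frac{(q-1)-q\,\rho_q(N)}{(q-1)^2},
\]
so whenever $\rho_q(N)\geq 2$ the deviation from $1$ has genuine size $(\rho_q(N)-1)/q$, not $O(q^{-3/2})$; the product over $q$ is only conditionally convergent, and it can diverge to $0$ even under the stated congruence condition. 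Concretely, take $k=2$ and $N=M^2$ with $M\equiv 0\pmod 6$: this $N$ satisfies the hypothesis, yet $\rho_q(N)=2$ for every prime $q\nmid 2M$, so $\sigma_q(N)\approx 1-1/q$ and $\mathfrak{S}(N)=0$. This is no accident: for such $N$ one has $N-p^2=(M-p)(M+p)$, which is prime for at most one $p$, so the major arcs certainly do not contribute $\gg N^{1/2}$. Thus "the major arcs contribute $\gg N^{1/k}$ for all $N$ satisfying the hypothesis" fails, and the exceptional set cannot be blamed entirely on the minor arcs. The correct treatment shows $\mathfrak{S}(N)>0$ if and only if, in addition to the congruence conditions, $x^k-N$ is irreducible over $\mathbb{Q}$ (equivalently, $N$ is not a perfect $j$th power for any prime $j\mid k$); the excluded $N\leq X$ number $O(X^{1/2})=o(X)$ and are absorbed into the exceptional set. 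For the remaining $N$, the tail $\sum_q(\rho_q(N)-1)/q$ is governed by the residue at $s=1$ of the Dedekind zeta function of $\mathbb{Q}(N^{1/k})$, and a Siegel--Brauer type (ineffective) bound yields $\mathfrak{S}(N)\gg_{\varepsilon}N^{-\varepsilon}$ --- weaker than your claimed $\gg 1$, but still sufficient against the power saving $X^{-2\delta}$ in your Chebyshev step.
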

In particular, Hua's result gives that for all $k\geq 1$ and \emph{almost all} $N\geq 1$ (subject to suitable congruence conditions), $N$ can be represented as \eqref{mainrepeq}, with $\Omega(\eta)~=~1$. Note that Hua's result has since been refined, with stronger bounds on the exceptional set of $N$ (see \cite[Satz 2]{schwarz1961darstellung} and \cite{bauer1999exceptional}).

In what follows we only consider results for \emph{all} sufficiently large $N$. First we recall the celebrated result of Chen \cite{chen1973} for $k=1$ and $N$ even, along with a recent variant for odd $N$ due to Li \cite{li2019representation}. Both of these results were proven using techniques from sieve theory.

\begin{theorem}[Chen]\label{chenthm}
    Every sufficiently large even number $N$ can be expressed as
    \begin{equation}\label{Cheneq}
        N=p+\eta,
    \end{equation}
    where $p$ is prime and $\eta>0$ has at most 2 prime factors counting multiplicity.
\end{theorem}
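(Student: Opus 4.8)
The plan is to defeat the parity obstruction by means of Chen's weighted sieve. Fix a large even $N$ and consider the sequence $\mathcal{A}=\{N-p : p\leq N\}$. Setting $z=N^{1/10}$ and $P(z)=\prod_{q<z}q$, any $n\in\mathcal{A}$ with $(n,P(z))=1$ has all prime factors $\geq N^{1/10}$ and, being $<N$, has at most nine of them. Writing $T$ for the number of primes $p\le N$ with $(N-p,P(z))=1$ and $\Omega(N-p)\leq 2$, we have $\#\{p : N-p=P_2\}\ge T$, so it suffices to show $T>0$.

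The heart of the method is a pointwise weight inequality. For $n$ surviving the sieve, set $w(n)=1-\tfrac{1}{2}\#\{q : z\le q<N^{1/3},\ q\mid n\}$. A short case analysis on $\Omega(n)$ shows $\mathbf{1}_{\Omega(n)\le 2}\ge w(n)$ for every surviving $n$ except those of the exceptional shape $n=q_1q_2q_3$ with $z\le q_1<N^{1/3}\le q_2\le q_3$, for which the inequality fails by exactly $\tfrac{1}{2}$. Summing yields
\begin{equation*}
T\ge S_1-\frac{1}{2} S_2-\frac{1}{2} S_3,
\end{equation*}
where $S_1=\#\{p : (N-p,P(z))=1\}$, $S_2=\sum_{z\le q<N^{1/3}}\#\{p : q\mid N-p,\ (N-p,P(z))=1\}$, and $S_3$ counts the exceptional triple products.

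The analytic engine is the linear (Jurkat--Richert) sieve combined with the Bombieri--Vinogradov theorem, which furnishes level of distribution $\theta=\tfrac{1}{2}$ and so controls the remainder terms on average over moduli. I would apply the lower-bound sieve to $S_1$, the upper-bound sieve (uniformly in $q$, then summed) to $S_2$, and for $S_3$ invoke Chen's \emph{switching principle}: the three-dimensional count $N=p+q_1q_2q_3$ is recast as a sieve over a new sequence in which one of the large prime variables plays the role of the sifted variable, reducing it to an upper-bound linear sieve that Bombieri--Vinogradov again renders admissible. Each piece then takes the shape $c_i\,N/(\log N)^2\,(1+o(1))$ with $c_i$ built from the classical sieve functions $f,F$ and the singular series.

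The final step is to verify the numerical inequality, namely that the main-term constant $c_1-\tfrac{1}{2} c_2-\tfrac{1}{2} c_3$ arising from the lower bound on $S_1$ and the upper bounds on $S_2,S_3$ is strictly positive; this amounts to evaluating the integrals defining $f$ and $F$ over the chosen exponent ranges and checking that the margin is positive, which for $z=N^{1/10}$ it comfortably is. I expect the principal obstacle to be the \emph{parity problem}: no single application of the upper or lower bound sieve can separate $P_2$ from $P_3$, and it is only the weighting together with the switching principle for $S_3$ --- turning an otherwise intractable three-dimensional sieve into a tractable one --- that produces a positive main term. Controlling $S_3$, and confirming that the switched sieve's error terms are genuinely absorbed by Bombieri--Vinogradov, is the delicate core of the argument.
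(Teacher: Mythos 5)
The paper does not actually prove this statement: it is Chen's theorem, quoted from the literature (\cite{chen1973}) and used only as a known ingredient for the $k=1$ case of the main results, so there is no in-paper proof to compare against. What you have written is an outline of Chen's original argument, and as an outline it is faithful: the sequence $\mathcal{A}=\{N-p\}$, the weight $1-\tfrac12\#\{q:\, z\le q<N^{1/3},\ q\mid n\}$, the decomposition $T\ge S_1-\tfrac12 S_2-\tfrac12 S_3$, Bombieri--Vinogradov as the level-of-distribution input, and the switching principle for $S_3$ are exactly the components of the standard proof.

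As a proof, however, it has gaps precisely at the points where Chen's theorem is hard. First, the claim that for the switched sum $S_3$ ``Bombieri--Vinogradov again renders admissible'' the error terms is not correct as stated: after switching, one sieves the sequence $\{N-q_1q_2q_3\}$, and the mean-value theorem one needs concerns the distribution of the products $q_1q_2q_3$ (in the prescribed ranges) in arithmetic progressions to moduli up to $N^{1/2-\varepsilon}$. That is not a consequence of the classical Bombieri--Vinogradov theorem for primes; it requires a separate large-sieve/bilinear-form argument, and this is the technical heart of Chen's paper and of the expositions in Halberstam--Richert and Nathanson. Second, your case analysis of the weight inequality misses non-squarefree survivors, e.g.\ $n=q_1q_2^2$ with $z\le q_1<N^{1/3}\le q_2$, for which $w(n)=\tfrac12>0=\mathbf{1}_{\Omega(n)\le 2}$; these must be removed by a separate (easy) bound of shape $O(N/z)$ for integers up to $N$ divisible by $q^2$ for some prime $q\ge z$. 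Third, the positivity of $c_1-\tfrac12 c_2-\tfrac12 c_3$ is asserted (``comfortably'') rather than verified; the numerical margin is exactly what dictates the admissible parameters (standard treatments take $z=N^{1/8}$, $y=N^{1/3}$, and the margin is not large), so without evaluating the sieve functions $f,F$ at the relevant arguments the argument is incomplete. In short: right strategy, correctly identified delicate points, but the three steps above are the substance of the theorem and are left undone.
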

\begin{theorem}[Li]\label{lithm}
    Every sufficiently large odd number $N$ can be expressed as
    \begin{equation}\label{Lieq}
        N=p+2\eta,
    \end{equation}
    where $p$ is prime and $\eta>0$ has at most 2 prime factors counting multiplicity.
\end{theorem}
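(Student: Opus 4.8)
The plan is to treat Theorem~\ref{lithm} as the odd-number analogue of Chen's theorem (Theorem~\ref{chenthm}) and to prove it by a Chen-type weighted linear sieve, rather than by any direct reduction to the even case. Since $N$ is odd and every prime $p>2$ is odd, the difference $N-p$ is even, so writing $N-p=2\eta$ the problem becomes the lower-bound count
\[
\#\{\, p\le N : \eta=(N-p)/2,\ \Omega(\eta)\le 2 \,\}\gg \frac{N}{(\log N)^2}.
\]
First I would set up the sieve with the sequence $\mathcal{A}=\{(N-p)/2 : 2<p\le N\}$ and the sifting set $\mathcal{P}$ of odd primes not dividing $N$. The quantity $|\mathcal{A}_d|=\#\{a\in\mathcal{A}:d\mid a\}$ counts primes $p\le N$ in a fixed residue class modulo $2d$, so by the prime number theorem for arithmetic progressions it is well approximated by $\omega(d)X/d$ with $X\sim\li(N)$ and a one-dimensional sieve density $\omega$. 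The hypothesis that $N$ is odd enters here, guaranteeing that the local densities at every prime are admissible and that the associated singular series is positive.

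The key analytic input is the level of distribution. I would bound the error terms $r_d=|\mathcal{A}_d|-\omega(d)X/d$ on average over $d\le Q$ with $Q=N^{1/2-\varepsilon}$; this is precisely the Bombieri--Vinogradov theorem, which supplies level of distribution $1/2$ unconditionally and is exactly what makes a Chen-type conclusion possible without any unproved hypothesis. With this in hand, the Jurkat--Richert linear sieve furnishes matching lower and upper bounds for the sifting functions $S(\mathcal{A},z)$ and $S(\mathcal{A}_q,z)$ appearing below.

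Next I would run Chen's weighted sieve, studying a weighted counting function of the shape
\[
S(\mathcal{A},N^{1/8})-\frac12\sum_{N^{1/8}\le q< N^{1/3}} S(\mathcal{A}_q,N^{1/8})-\frac12\,\Sigma ,
\]
where $\Sigma$ collects those $\eta$ of the form $\eta=q_1q_2q_3$ with three comparable large prime factors. The weights are arranged so that any $p$ receiving positive total weight forces $\eta$ to have at most two prime factors, and the first two terms are estimated from below and above, respectively, by the linear sieve bounds together with the Bombieri--Vinogradov level. The delicate term is $\Sigma$: a direct sieve cannot control it because of the parity obstruction.

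The main obstacle, therefore, is exactly the point where Chen's argument is subtle, namely Chen's \emph{switching principle}. To bound $\Sigma$ one reinterprets the condition ``$(N-p)/2=q_1q_2q_3$ with $p$ prime'' as a statement about a different sequence, sieving the values $N-2q_1q_2q_3$ for primality in the variable $p$ and applying the linear sieve to \emph{that} sequence; Bombieri--Vinogradov again furnishes the level of distribution needed after the switch. Carrying out the numerical optimisation of the sieve parameters so that the combined lower bound stays strictly positive, and verifying that the singular-series constant for odd $N$ (which factors as a positive absolute constant times $\prod_{q\mid N,\, q>2}\frac{q-1}{q-2}$) does not vanish, are the two places where care is required; both go through precisely because $N$ is odd, yielding the stated positive proportion of representations.
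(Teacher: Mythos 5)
The first thing to note is that the paper does not actually prove this statement: Theorem \ref{lithm} is imported from the literature (Li \cite{li2019representation}), and the paper later relies on it --- together with Chen's theorem --- to dispose of the $k=1$ case of Theorem \ref{mainthmtech}, whose proof opens with ``the case $k=1$ is covered by Chen and Li's theorems.'' So any attempt to deduce Theorem \ref{lithm} from the paper's own machinery would be circular, and your instinct to give a direct Chen-type argument rather than a reduction is the right one; it is also, in substance, the route Li actually takes. Your skeleton is correct: the sequence $\mathcal{A}=\{(N-p)/2 : 2<p\le N\}$, so that $|\mathcal{A}_d|$ for odd squarefree $d$ coprime to $N$ counts primes in a single residue class modulo $2d$; a linear (one-dimensional) sieve with level of distribution $1/2$ supplied by Bombieri--Vinogradov; Chen's weighted inequality with the three-prime term $\Sigma$; and the switching principle to bound $\Sigma$ from above. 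The role you assign to the oddness of $N$ (integrality of $(N-p)/2$, and positivity of the local factor $\prod_{q\mid N,\,q>2}\frac{q-1}{q-2}$, which is in fact $\geq 1$, so positivity is immediate) is also correct.

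Judged as a proof rather than a roadmap, however, the proposal has genuine gaps, and they sit exactly at the two points you wave through. First, the switched sum is not controlled by the Bombieri--Vinogradov theorem in the form of Lemma \ref{bomvinthm}: after switching, one needs an averaged error estimate, over moduli up to $N^{1/2-\varepsilon}$, for the sequence $\{N-2q_1q_2q_3\}$, i.e.\ a Bombieri--Vinogradov-type theorem for a sequence supported on products of three primes in prescribed ranges rather than on primes. This is a different statement from \eqref{bomvineq}; it is standard but requires real work (a bilinear/vaughan-type decomposition), and both Chen and Li devote a substantial part of their papers to it. Second, the conclusion of the whole method is a numerical inequality: the lower bound for $S(\mathcal{A},N^{1/8})$ minus the upper bounds for the terms $S(\mathcal{A}_q,N^{1/8})$ and for $\Sigma$ must come out strictly positive, and this depends delicately on the constants in the Jurkat--Richert sifting functions and on the exponents $1/8$ and $1/3$. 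Asserting that the optimisation ``stays strictly positive'' is asserting precisely the content of the theorem; nothing in the outline certifies it, and with poorly chosen parameters the combination is negative. So what you have is an accurate description of Li's (and Chen's) strategy, not yet a proof of the statement.
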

\noindent
Here, the factor of 2 in \eqref{Lieq} is to force $p+2\eta$ to be odd; provided of course $p\neq 2$.

Combining Theorems \ref{chenthm} and \ref{lithm}, we have that every large $N\geq 1$ can be represented as the sum of a prime and a number with at most 3 prime factors. Any further lowering of the number of prime factors in Chen or Li's results appears out of reach, so we do not attempt this here.

Now, representing $N$ of the form \eqref{mainrepeq} when $k\geq 2$ is naturally more difficult, owing to sparseness of the set of $k$th prime powers, and additional ``forced" prime factors of $N-p^k$. Thus in this setting, much less is known. However, the following classical result, due to Erd\H{o}s \cite{erdos1935representation} and Rao \cite{rao1940representation}, can be viewed as partial progress towards representing large $N$ in the form \eqref{mainrepeq}.

\begin{theorem}[Erd\H{o}s--Rao]\label{erdraothm}
    Let $k\geq 2$. Then every sufficiently large integer $N$ such that
    \begin{equation}\label{erdraocong}
        N\not\equiv 1
        \begin{cases}
            \pmod{16},&\text{when $k=4$},\\
            \pmod{4},&\text{when $k=2$},
        \end{cases}
    \end{equation}
    can be expressed as
    \begin{equation*}
        N=p^k+\eta    
    \end{equation*}
    where $p$ is prime and $\eta>0$ is $k$th power-free.
\end{theorem}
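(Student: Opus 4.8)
The plan is to fix $x=N^{1/k}$ and to count
\[
G(N)=\#\{p\le x:\ p\text{ prime},\ N-p^k\text{ is }k\text{th power-free}\},
\]
aiming to show $G(N)\ge 1$ for all large $N$ satisfying \eqref{erdraocong}. I would detect the $k$th power-free condition with the Möbius-type identity $\sum_{d^k\mid m}\mu(d)=\mathbf 1_{m\ \text{is}\ k\text{-free}}$, so that, writing $\pi_d(x;N)=\#\{p\le x:\ p^k\equiv N\pmod{d^k}\}$,
\[
G(N)=\sum_{d\ge 1}\mu(d)\,\pi_d(x;N).
\]
Everything then reduces to understanding $\pi_d(x;N)$, and in particular to the number $\rho_k(d^k;N)$ of solutions of $y^k\equiv N\pmod{d^k}$ with $\gcd(y,d)=1$; this is exactly where results on $k$th power residues enter.

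For the main term I would take $d$ up to a fixed power of $\log N$ and invoke the prime number theorem in arithmetic progressions (Siegel--Walfisz), giving $\pi_d(x;N)\sim \tfrac{\rho_k(d^k;N)}{\phi(d^k)}\pi(x)$. Summing the main terms produces $G(N)\sim \mathfrak{S}(N)\,\pi(x)$ with the singular series
\[
\mathfrak{S}(N)=\prod_{q\ \text{prime}}\Bigl(1-\frac{\rho_k(q^k;N)}{\phi(q^k)}\Bigr),
\]
each Euler factor lying in $[0,1)$. The crucial point is positivity of $\mathfrak{S}(N)$: a factor vanishes precisely when $\rho_k(q^k;N)=\phi(q^k)$, i.e.\ when $N\equiv y^k\pmod{q^k}$ for \emph{every} unit $y$ modulo $q^k$. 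Analysing the $k$th power map on $(\mathbb{Z}/q^k)^{*}$, this degeneracy forces $\phi(q^k)\mid k$ for odd $q$ (impossible for $k\ge 2$) and, for $q=2$, forces $\lambda(2^k)=2^{k-2}\mid k$, which among $k\ge 2$ holds only for $k=2$ and $k=4$; in those cases the obstruction is exactly $N\equiv 1\pmod 4$ and $N\equiv 1\pmod{16}$. Thus condition \eqref{erdraocong} is precisely what guarantees $\mathfrak{S}(N)\gg 1$, and this computation simultaneously explains why those and only those congruences must be excluded.

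The remaining task — and the main obstacle — is to show that the tail $\sum_{d>D}|\mu(d)|\,\pi_d(x;N)$ is $o(\pi(x))$. For moderate $d$ with $d^k\le x$ I would use Brun--Titchmarsh together with the multiplicative bound $\rho_k(d^k;N)\le k^{\omega(d)}\ll_{k,\varepsilon} d^{\varepsilon}$, so that this part of the tail is $\ll x\sum_{d>D}d^{\varepsilon-k}=o(\pi(x))$ once $D=D(N)$ is a sufficiently large power of $\log N$. The delicate range is $d^k>x$, where each admissible residue class contains at most one integer up to $x$, and a naive union bound fails: averaged over $N$ the resulting count is already of size $\pi(x)$. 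Here I would pass to a representation count, writing each contributing pair as $N=n^k+m\,d^k$ and bounding $\sum_{d^k>x}\pi_d(x;N)$ by the number of solutions of $N=n^k+m\,d^k$. Splitting according to the cofactor $m$, the small-$m$ part is controlled by uniform bounds on the number of representations of $N$ as a (weighted) sum of two $k$th powers, while the large-$m$ part is controlled by an averaged bound on the large $k$th power divisors of the values $N-n^k$. Establishing that this combined contribution is genuinely $o(\pi(x))$ for \emph{every} large $N$ is the technical heart of the argument; the case $k=2$, where the underlying form $X^2+mY^2$ yields the weakest savings, is the most demanding and must be handled with particular care. With the tail controlled, $G(N)\sim\mathfrak{S}(N)\,\pi(x)\to\infty$, so in particular $G(N)\ge 1$, which completes the proof.
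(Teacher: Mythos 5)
The paper never proves Theorem \ref{erdraothm} directly: it is quoted from Erd\H{o}s and Rao, and within the paper it is only recovered as a special case of the stronger Theorem \ref{mainthmtech}, whose proof goes through the weighted DHR sieve instead. So your proposal must stand on its own as a reconstruction of the classical argument. Its skeleton is the right one, and the best part of it is fully correct: your analysis of when an Euler factor of $\mathfrak{S}(N)$ vanishes. Indeed, the factor at $q$ vanishes iff every unit is a $k$th root of $N$ modulo $q^k$, i.e.\ iff $N\equiv 1\pmod{q^k}$ and the exponent of $(\mathbb{Z}/q^k\mathbb{Z})^{*}$ divides $k$; for odd $q$ that exponent is $q^{k-1}(q-1)>k$, and for $q=2$ it divides $k$ only when $k=2$ or $k=4$, which reproduces \eqref{erdraocong} exactly. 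Combined with the uniform bound $\mathfrak{S}(N)\gg_k 1$ for admissible $N$ (only $O_k(1)$ factors differ appreciably from $1$, and each nonvanishing one is at least $1/\varphi(q^k)$) and Siegel--Walfisz for moduli $d^k\le(\log N)^{O(1)}$, your main term is in order.

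The genuine gap is the one you flag yourself: the range $d^k>x$ is described, not proved, and that range is precisely where the content of Erd\H{o}s's and Rao's papers lies, so as written the proposal is a programme rather than a proof. Moreover, the tool you name for the ``large-$m$'' part --- an averaged bound on large $k$th power divisors of the values $N-n^k$ --- is the wrong one: for each $n$ the number of primes $q$ with $q^k\mid N-n^k$ and $q^k>x^{1-\delta}$ is $O_k(1)$, so averaging over $n$ only gives $O_k(\pi(x))$, the same failure as the union bound you already rejected. What closes the gap is a three-range split in which the middle range is saved by counting \emph{moduli}, not values: for $x^{1-\delta}<q^k\le N^{1-\sigma}$ (with $0<\delta<\sigma$) each modulus contributes trivially $\le k(x/q^k+1)\ll_k x^{\delta}$, and there are at most $x^{1-\sigma}$ primes $q$ in this range, so the total is $\ll_k x^{1+\delta-\sigma}=o(\pi(x))$. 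Only for $q^k>N^{1-\sigma}$ is the cofactor small, $m<N^{\sigma}$, and there you need, for each fixed $m$, a bound $\ll_k N^{\varepsilon}$ on the solutions of $p^k+mq^k=N$ that is \emph{uniform in $m$}; this is a real issue your sketch glosses over, since the paper's Proposition \ref{thueprop} covers only $m=1$ (Estermann; Bombieri--Schmidt for $x^k+y^k$). For $k\ge 3$ one needs coefficient-uniform Bombieri--Schmidt plus a divisor argument when $x^k+my^k$ is reducible, and for $k=2$ one needs the $\ll N^{\varepsilon}$ count of square roots of $-4m$ modulo $4N$, uniformly in $m$. Note finally why you cannot borrow the paper's own device: in Proposition \ref{akvprop} the sieve condition $P^-(a)\ge X^{1/v}$ forces the cofactor to be $1$ or $\ge X^{1/v}$, collapsing everything onto Proposition \ref{thueprop}; in the Erd\H{o}s--Rao setting no small primes have been removed, so $m$ genuinely ranges up to $N^{\sigma}$ and the $m$-uniform bound is unavoidable. (A minor repair in your moderate range: Brun--Titchmarsh loses its logarithmic saving as $d^k$ approaches $x$, so cap that range at $d^k\le x^{1-\delta}$ and absorb the rest into the trivial-counting range above.)
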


Here, the condition that $\eta$ is $k$th power-free is much simpler than $\eta$ having a bounded number of prime factors. In particular, most integers ($>60\%$) are square-free (and thus $k$th power-free), whereas the set of integers with a bounded number of prime factors has a natural density of 0.

\subsection{Statement of results}
Our main result is as follows.
\begin{theorem}\label{mainthmsimple}
    Let $k\geq 2$. There exists an integer $M(k)$ such that every sufficiently large integer $N$ can be expressed as
    \begin{equation}\label{firstNeq}
        N=p^k+\eta
    \end{equation}
    where $p$ is prime and $\eta>0$ has at most $M(k)$ prime factors counting multiplicity. Here, $M(k)=6k$ is admissible for all even $k\geq 2$, and $M(k)=4k$ is admissible for all odd $k\geq 3$. In addition, for sufficiently large $k\geq k_{\varepsilon}$ one can set
    \begin{equation}\label{unconasym}
        M(k)=(2+\varepsilon)k,
    \end{equation}
    for any $\varepsilon>0$. Or, under assumption of the Elliott--Halberstam conjecture,
    \begin{equation}\label{ehasym}
        M(k)=(1+\varepsilon)k.
    \end{equation}
\end{theorem}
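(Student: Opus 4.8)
The plan is to recast \eqref{firstNeq} as a sieve problem and feed it into the weighted linear sieve of Diamond, Halberstam and Richert. Fix a large $N$ and consider the finite sequence
\[
\mathcal{A} = \{\, N - p^k : p \text{ prime},\ 2 \le p^k \le N \,\},
\]
so that $|\mathcal{A}| \asymp \pi(N^{1/k}) \asymp N^{1/k}/\log N$ while every element satisfies $0 < N - p^k \le N$. The elementary observation driving everything is that if, after deleting from $\mathcal{A}$ all elements divisible by some prime below $z$, at least one element survives, then that surviving $\eta = N - p^k$ has all prime factors $\ge z$ and hence $\Omega(\eta) \le \log N / \log z$. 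Thus it suffices to sift $\mathcal{A}$ up to the largest $z$ permitted by its distribution and to read off $M(k)$ from $\log N/\log z$.

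First I would pin down the sieve density and its dimension. For a prime $q \nmid kN$ one has $|\mathcal{A}_q| = \sum_{a^k \equiv N \,(q)} \pi(N^{1/k}; q, a)$, so the local density is $\omega(q)/q$ with $\omega(q) = q\,\rho_k(q;N)/(q-1)$, where $\rho_k(q;N)$ counts the solutions of $x^k \equiv N \pmod q$. Writing $\rho_k(q;N) = \sum_{\chi^k = \chi_0}\chi(N) = 1 + \sum_{\chi \ne \chi_0,\, \chi^k = \chi_0}\chi(N)$, the principal character contributes $1$ and the non-principal $k$th-order characters cancel on average over $q$, giving
\[
\sum_{q \le x} \frac{\omega(q)\log q}{q} = \log x + O(1),
\]
so $\mathcal{A}$ is governed by a \emph{linear} (dimension $\kappa=1$) sieve, exactly as in Chen's and Li's theorems. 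It is here that the arithmetic of $k$th power residues enters, and the local factors at the primes dividing $k$ — above all $q=2$ when $k$ is even — must be handled by hand; it is precisely these $2$-adic obstructions, the same ones visible in \eqref{erdraocong}, that will cost us a larger $M(k)$ in the even case.

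Next I would establish the level of distribution, which is the decisive input. I need $\sum_{d \le D}\abs{R_d} \ll |\mathcal{A}|/(\log N)^A$ with $R_d = |\mathcal{A}_d| - (\omega(d)/d)|\mathcal{A}|$, and since $|\mathcal{A}_d| = \sum_{a^k \equiv N\,(d)}\pi(N^{1/k};d,a)$ counts primes up to $N^{1/k}$ in the progressions cut out by the $k$th roots of $N$ modulo $d$, the Bombieri--Vinogradov theorem supplies level $D = (N^{1/k})^{1/2} = N^{1/(2k)}$. The genuine complication, absent when $k=1$, is that the number of roots $\rho_k(d;N)$ fluctuates with $d$ and can be as large as $d^{o(1)}$, so I must sum Bombieri--Vinogradov over a variable family of residue classes; this is exactly where I would deploy bounds on the number of representations of an integer as a sum of two $k$th powers, which control the diagonal and off-diagonal contributions once one sums over all $d \le D$. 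Under the Elliott--Halberstam conjecture the level improves to $D = N^{(1-\varepsilon)/k}$.

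Finally I would apply the DHR weighted sieve. Taking $z$ as a suitable power of $D$, the logarithmic weights produce a positive lower bound for the weighted count of surviving $\eta$, hence an $\eta = N - p^k$ with $\Omega(\eta) \le M(k)$. With $D = N^{1/(2k)}$ the combinatorial ceiling is $\log N/\log D = 2k$; the weights let one approach it, so that as $k\to\infty$ the sieve constants tend to their optimum and give $M(k)=(2+\varepsilon)k$, while for small $k$ the explicit weighted computation together with a safely positive choice of sifting level yields $M(k)=4k$ for odd $k$, the extra $2$-adic loss degrading this to $6k$ for even $k$; substituting the Elliott--Halberstam level $N^{(1-\varepsilon)/k}$ halves the ceiling to $k$ and gives $M(k)=(1+\varepsilon)k$. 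The hard part, I expect, is not the (now standard) application of the weighted sieve but the level of distribution itself: making Bombieri--Vinogradov uniform over the fluctuating family of $k$th-root progressions and taming the summed error via the sum-of-two-$k$th-powers bounds, together with the delicate small-prime (especially $2$-adic) analysis that certifies dimension exactly $1$ uniformly in $N$ and is responsible for the even/odd dichotomy in $M(k)$.
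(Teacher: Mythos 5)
There is a genuine gap, and it sits at the heart of your argument: the claim that the sieve is linear. You assert that the non-principal characters in $\rho_k(q;N)=1+\sum_{\chi\neq\chi_0,\,\chi^k=\chi_0}\chi(N)$ cancel on average over $q$, so that $\kappa=1$. This is false for general $N$, and the theorem must cover \emph{every} sufficiently large $N$. Take $N=M^k$ a perfect $k$th power: then for every character with $\chi^k=\chi_0$ and every $q\nmid M$ one has $\chi(N)=\chi(M)^k=1$, so $\rho_k(q;N)=\gcd(k,q-1)$ identically, and by Menon's identity
\begin{equation*}
\sum_{\substack{1\leq a\leq k\\(a,k)=1}}\gcd(a-1,k)=\varphi(k)d(k)
\end{equation*}
the average of $\rho_k(q;N)$ over primes $q$ is $d(k)$, not $1$. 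This is exactly why the paper runs the DHR sieve at dimension $\kappa=d(k)$ (Proposition \ref{omegaprop}), and why the sifting functions $f_{d(k)}$, $F_{d(k)}$ and the sifting limit $\beta_{d(k)}$ enter the final numerology; Proposition \ref{cycloprop} even shows that for $N=M^k$ one genuinely needs $\widetilde{M}(k)\geq d(k)$, so no dimension-one analysis can be uniform in $N$. Even in the favourable case where $x^k-N$ is irreducible (so the Chebotarev average is $1$), the error term in that equidistribution statement depends on $N$, which destroys the uniformity required by the sieve axioms as $N\to\infty$; the paper points out precisely this obstruction in Section \ref{furthersect} when explaining why a $\kappa=1$ sieve cannot be applied for non-$k$th-power $N$. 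Since your values $4k$, $6k$, $(2+\varepsilon)k$ and $(1+\varepsilon)k$ are all read off from a linear-sieve computation, none of them is supported as stated.

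Two secondary points. First, the bounds on representations as a sum of two $k$th powers play no role in the level of distribution: the fluctuating number of residue classes $\rho_k(d)\leq k^{\omega(d)}$ is absorbed into a weight $(4k)^{\omega(d)}$ and handled by the weighted Bombieri--Vinogradov estimate (Lemma \ref{bomvinthm2}, proved by Cauchy--Schwarz plus Brun--Titchmarsh); in the paper the Thue-equation bounds (Proposition \ref{thueprop}) are used only to make $\eta$ $k$th power-free (Proposition \ref{akvprop}), i.e.\ for the Erd\H{o}s--Rao strengthening in Theorem \ref{mainthmtech}, which your proof does not need. Second, the even/odd dichotomy $6k$ versus $4k$ is not a ``$2$-adic loss'' inside the sieve: the paper divides out the full forced divisor $Q=\prod_i q_i^{m_i}$ over \emph{all} primes with $q_i-1\mid k$ (for even $k$ this includes $3$ and possibly many others, not just $2$) before sieving, and then adds the prime factors of $Q$ back at the end via Corollary \ref{tildeMcor}; for odd $k$ only $q=2$ can occur, costing $+1$, while for even $k$ the cost $S(k)$ can be as large as roughly $2k$, which is where $6k$ comes from. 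Your proposal gestures at handling small primes ``by hand'' but never performs this removal, and without it the unweighted set $\{N-p^k\}$ can be annihilated entirely by sieving (e.g.\ $k=2$, $N\equiv 1\pmod 3$ forces $3\mid N-p^2$), so even the qualitative conclusion would fail.
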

Similar to the statement of Li's theorem (Theorem \ref{lithm}), there may be forced divisors of $\eta$ depending on the congruence class of $N$. For example, if $k=2$ and $N\equiv 1\pmod{3}$, then $\eta=N-p^2\equiv 0\pmod{3}$ for all $p\neq 3$. We thus in fact prove the following more technical result, which accounts for congruence conditions on $N$, and from which Theorem \ref{mainthmsimple} will follow.

\begin{theorem}\label{mainthmtech}
    Fix $k\geq 1$ and let $N$ be sufficiently large. Let $q_1,q_2,\ldots,q_{\ell_k}$ be all the primes such that $q_i-1\mid k$. For each $i\in\{1,2,\ldots,\ell_k\}$, let $m_i\geq 0$ be the largest integer such that $N\equiv 1\pmod{q_i^{m_i}}$ and 
    \begin{align}
        q_i^{m_i-1}(q_i-1)&\mid k,\quad\text{if $q_i\neq 2$ or $m_i\leq 2$,}\label{qcon1}\\
        2^{m_i-2}&\mid k,\quad \text{if $q_i=2$, $k$ is even and $m_i\geq 3$.}\label{qcon2}
    \end{align} 
    Then, there exists some $\widetilde{M}(k)>0$ (independent of $N$) such that
    \begin{equation}\label{secondNeq}
        N=p^k+q_1^{m_1}q_2^{m_2}\cdots q_{\ell_k}^{m_{\ell_k}}\cdot \eta,
    \end{equation}
    where $\eta>0$ is $k$th power-free (provided $k\geq 2$), not divisible by any of the $q_i$ and has at most $\widetilde{M}(k)$ prime factors counting multiplicity. Here, $\widetilde{M}(k)=4k+1$ is admissible for all even $k\geq 2$, and $\widetilde{M}(k)=4k-1$ is admissible for all odd $k\geq 1$. For $1\leq k\leq 10$, smaller values of $\widetilde{M}(k)$ are also given in Table \ref{Mktable}. In addition, for sufficiently large $k\geq k_{\varepsilon}$ one can set
    \begin{equation}\label{unconasym2}
        \widetilde{M}(k)=(2+\varepsilon)k,
    \end{equation}
    for any $\varepsilon>0$. Or, under assumption of the Elliott--Halberstam conjecture,
    \begin{equation}\label{ehasym2}
        \widetilde{M}(k)=(1+\varepsilon)k.
    \end{equation}
\end{theorem}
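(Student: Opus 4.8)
The plan is to prove Theorem \ref{mainthmtech} by a weighted sieve applied to the sequence
\[
\mathcal{A} = \{\, N - p^k : p \text{ prime},\ p^k \le N \,\},
\]
after first stripping off the unavoidable prime factors. First I would explain the shape of \eqref{secondNeq}. If $q$ is a prime with $q-1\mid k$ then $x^k \equiv 1 \pmod q$ for every unit $x$, so $q \mid N - p^k$ whenever $N\equiv 1\pmod q$ and $p\neq q$; these are exactly the $q_i$. More generally $q^{m}\mid N-p^k$ is forced precisely when the exponent of $(\mathbb{Z}/q^{m}\mathbb{Z})^\times$ divides $k$, and computing this exponent (namely $q^{m-1}(q-1)$ for odd $q$, and $2^{m-2}$ for $q=2$, $m\ge 3$) yields exactly the conditions \eqref{qcon1} and \eqref{qcon2}; the maximality of $m_i$ pins down the forced factor $Q := \prod_i q_i^{m_i}$. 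Since each $q_i\le k+1$ and each $m_i$ is bounded through \eqref{qcon1}--\eqref{qcon2}, we have $Q = O_k(1)$, so writing $N-p^k = Q\eta$ leaves $\eta \asymp N-p^k$ and reduces the problem to producing one admissible $p$ for which the cofactor $\eta$ has few prime factors, is coprime to $Q$, and is $k$th power-free (the finitely many $p = q_i$ being discarded).

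Next I would verify the sieve axioms. The local density is governed by $k$th power residues: for a prime $\ell\nmid Qk$ the number of $p\pmod\ell$ with $p^k\equiv N$ is $\rho(\ell)$, the number of roots of $X^k-N$ modulo $\ell$, so $|\mathcal{A}_\ell| \approx \tfrac{\rho(\ell)}{\ell-1}|\mathcal{A}|$. By the Frobenius density theorem the mean value of $\rho(\ell)$ over primes equals the number of irreducible factors of $X^k-N$ over $\mathbb{Q}$, which is $1$ for all $N$ outside a sparse set of perfect powers; hence the sieve has dimension $\kappa=1$. (For the exceptional perfect-power $N$ the dimension can be larger and these need a separate, but analogous, treatment.) The level of distribution comes from counting $p\le N^{1/k}$ in the $\rho(d)$ residue classes modulo $d$ cut out by $p^k\equiv N$; the Bombieri--Vinogradov theorem gives level $\theta=\tfrac12$ in the variable $x:=N^{1/k}$, and the Elliott--Halberstam conjecture upgrades this to $\theta=1-\varepsilon$. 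The requirement that $\eta$ be coprime to each $q_i$ (equivalently $v_{q_i}(N-p^k)=m_i$) just removes finitely many further residue classes modulo $q_i^{m_i+1}$ and is absorbed into the sieve.

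With these data I would invoke the weighted sieve of Diamond, Halberstam and Richert. A crude lower-bound (fundamental-lemma) sieve up to $z=x^{\theta/2}=x^{1/4}$, where positivity for the linear sieve is guaranteed, leaves every surviving $\eta\le N = x^k$ with all prime factors exceeding $x^{1/4}$, so $\Omega(\eta) < \log(x^k)/\log(x^{1/4}) = 4k$; after the bookkeeping for $Q$ this yields $\widetilde{M}(k)=4k\pm 1$, the sign depending on the parity of $k$ through $\ell_k$ and the forced factor at $q=2$, and the optimised weights give the smaller entries in Table \ref{Mktable}. Pushing the weighted sieve so that $z\to x^{\theta}$ makes its efficiency factor tend to $1$ as the permitted number of prime factors grows, giving $\Omega(\eta)\le (1+o(1))\log(x^k)/\log(x^{\theta}) = (1+o(1))\,k/\theta$; with $\theta=\tfrac12$ this is $(2+\varepsilon)k$ for $k\ge k_\varepsilon$, and with $\theta=1-\varepsilon$ it is $(1+\varepsilon)k$ under Elliott--Halberstam, matching \eqref{unconasym2}--\eqref{ehasym2}.

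The step I expect to be the main obstacle is guaranteeing that some survivor is genuinely $k$th power-free, since the sieve only removes prime factors below $z$. A survivor fails this only if $\ell^k\mid N-p^k$ for some prime $\ell>z$, and I would bound the number $S$ of such $p$ by writing $N = p^k + m\ell^k$ and splitting on the size of $m$. For large $m$ (hence $\ell$ not too large) the count is controlled by the main term $x\sum_{\ell>z}\rho(\ell)/\ell^k$ plus a trivial lattice-point error over a short range of $\ell$, both $o(x/\log x)$. For small $m$ one fixes $m$ and counts solutions of $p^k+m\ell^k=N$, which is a representation of $N$ by the binary form $X^k+mY^k$; the bounds on the number of representations of $N$ as a sum of two $k$th powers (the third tool advertised in the abstract) give $O_\varepsilon(N^\varepsilon)$ per $m$, and summing over the short admissible range of $m$ keeps this $o(x/\log x)$. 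Since the weighted sieve produces $\gg x/\log x$ survivors, the exceptional $p$ (non-$k$th power-free, or spoiling coprimality to $Q$) cannot exhaust them, so an admissible $p$ exists and Theorem \ref{mainthmtech} follows; deducing Theorem \ref{mainthmsimple} is then a matter of dropping the congruence conditions and bounding $\Omega(Q)$.
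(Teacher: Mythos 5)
Your overall architecture (strip the forced factor $Q$, sieve $\{(N-p^k)/Q\}$ with a weighted sieve at level $1/2$ from Bombieri--Vinogradov or $1-\varepsilon$ from Elliott--Halberstam, plus a Thue-equation argument for $k$th-power-freeness) matches the paper's, and your $k$th-power-free step is essentially Proposition \ref{akvprop} in spirit. The genuine gap is your assertion that the sieve has dimension $\kappa=1$. You justify it by the Frobenius density theorem: the average of $\rho(\ell)$ equals the number of irreducible factors of $X^k-N$ over $\mathbb{Q}$, which is $1$ for non-perfect-power $N$. Two things go wrong. First, uniformity in $N$: the axiom \eqref{omegacon} must be verified with $\kappa$ and $L$ independent of $N$, since the theorem is a statement about \emph{every} sufficiently large $N$ and the sieve's lower bound in Lemma \ref{DHRsieve} carries an implied constant $\gg_L$, with $X\to\infty$ as $N\to\infty$. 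The Chebotarev/Frobenius error term for the splitting field of $X^k-N$ depends on $N$ (on the discriminant of that field), so the one-dimensional density statement cannot be converted into \eqref{omegacon} with $N$-independent data. The paper makes exactly this point in Section \ref{furthersect}: even for $k=2$ and non-square $N$, where the heuristic dimension is $1$, ``the DHR sieve can not be applied as in the proof of Theorem \ref{mainthmtech}'' because of this $N$-dependence. Second, the perfect powers you set aside are not a removable exceptional set: Theorem \ref{mainthmtech} must cover $N=M^k$, and Proposition \ref{cycloprop} shows that for infinitely many such $N$ every $N-p^k$ with $p<N^{1/k}$ has at least $d(k)$ prime factors, so for these $N$ the problem genuinely has dimension of order $d(k)$; your ``separate, but analogous, treatment'' is in fact the whole difficulty, not a footnote.

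What the paper does instead is give up the exact local density and use the $N$-uniform upper bound $g(p)\le\gcd(k,p-1)\,p/(p-1)$; averaging this over primes via Mertens' theorem in arithmetic progressions and Menon's identity (Lemmas \ref{mertenaplem} and \ref{menonlem}) verifies \eqref{omegacon} with $\kappa=d(k)$ and $L=O_k(1)$, uniformly in $N$ (Proposition \ref{omegaprop}). Consequently the quantitative bookkeeping is not your linear-sieve count $\Omega(\eta)<\log(x^k)/\log(x^{1/4})=4k$: the admissible values $4k+1$ (even $k$) and $4k-1$ (odd $k$) come from bounding the DHR weighted-sieve quantity $\widetilde{N}\bigl(d(k),2k+\varepsilon,1/2;\zeta,\xi\bigr)$ of Lemma \ref{Nklem}, partly numerically, and the even/odd distinction enters through the size of $d(k)$, not through parity bookkeeping on $Q$ as you suggest. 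Your asymptotics $(2+\varepsilon)k$ and $(1+\varepsilon)k$ do survive this correction, but only because $d(k)=O(k^{\varepsilon})$ and $\beta_{d(k)}\ll d(k)$ make the dimension a lower-order effect; as written, every quantitative claim in your proposal rests on the unjustified $\kappa=1$.
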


\def\arraystretch{1.4}
\begin{table}[h]
\centering
\caption{Smaller values for $\widetilde{M}(k)$, appearing in Theorem \ref{mainthmtech}, for $1\leq k\leq 10$. Corresponding values of $M(k)$ in Theorem \ref{mainthmsimple} are also given, computed via Corollary \ref{tildeMcor}.}
\begin{tabular}{|c|c c|}
\hline
$k$ & $\widetilde{M}(k)$ & $M(k)$\\
\hline
1 & 2 & 3\\
2 & 8 & 12\\
3 & 11 & 12\\
4 & 17 & 23\\
5 & 16 & 17\\
\hline
\end{tabular}
\begin{tabular}{|c|c c|}
\hline
$k$ & $\widetilde{M}(k)$ & $M(k)$\\
\hline
6 & 25 & 31\\
7 & 20 & 21\\
8 & 30 & 37\\
9 & 29 & 30\\
10 & 35 & 40\\
\hline
\end{tabular}
\label{Mktable}
\end{table}

Theorem \ref{mainthmtech} relates more directly to the literature discussed in Section \ref{overviewsect}. Firstly, the case where $m_1=m_2=\cdots=m_{\ell_k}=0$ corresponds to the congruence condition in Hua's Theorem (Theorem \ref{huathm}) and recovers Chen's theorem (Theorem~\ref{chenthm}) when $k=1$. Secondly, if $k$ and $N$ are odd then $m_1=1$ and $m_2=\cdots=m_{\ell_k}=0$, thereby reducing \eqref{secondNeq} to $N=p^k+2\eta$. So in particular, Theorem \ref{mainthmtech} also recovers Li's theorem (Theorem \ref{lithm}).

In relation to Erd\H{o}s and Rao's result (Theorem \ref{erdraothm}), one sees that the product
\begin{equation}\label{qprod}
    q_1^{m_1}q_2^{m_2}\cdots q_{\ell_k}^{m_{\ell_k}}
\end{equation}
appearing in \eqref{secondNeq} is $k$th power-free if and only if the condition \eqref{erdraocong} is satisfied. Since $\eta$ in \eqref{secondNeq} is $k$th power-free and coprime to the product \eqref{qprod}, Theorem \ref{mainthmtech} is also a direct strengthening of Theorem \ref{erdraothm}.

To prove Theorem \ref{mainthmtech} we apply the weighted Diamond--Halberstam--Richert (DHR) sieve, combined with bounds on the number of representations of an integer as the sum of two $k$th powers, and results on $k$th power residues. As in many sieve-theoretic applications, we also use the Bombieri--Vinogradov theorem.

With a more involved argument, it seems likely that our values of $\widetilde{M}(k)$ and $M(k)$ could be lowered even further. In this regard, we note that recent (unpublished) work by Lichtman \cite[Theorem 1.7]{lichtman2023primes} improves upon a variant of the Bombieri--Vinogradov theorem applicable to Goldbach-like problems. In addition, one could try constructing a hybrid sieve, similar to that due to Irving \cite{irving2015almost}, in an attempt to improve upon our results for large $k$.  

\subsection{Paper outline}
An outline of the rest of the paper is as follows. In Section \ref{litsect} we state some preliminary results from the literature. This includes a discussion of $k$th power residues, representations of integers as the sum of two $k$th powers, the DHR weighted sieve, and the Bombieri--Vinogradov theorem. In Section \ref{techsect} we prove Theorem \ref{mainthmtech}. Using Theorem \ref{mainthmtech} we then prove Theorem \ref{mainthmsimple} in Section \ref{simplesect}. Finally, in Section \ref{furthersect} we provide simple computations and arguments that allow us to conjecture an optimal value of $\widetilde{M}(k)$. Here, we also discuss the case where $N$ is not a $k$th power, and present a related variant of Hardy and Littlewood's ``Conjecture H".

\section{Preliminary results}\label{litsect}
In this section we recall some results from the literature that will be used to form the basis of our proof of Theorem \ref{mainthmtech}.

\subsection{Results on $k$th power residues}\label{kthressect}
We begin by discussing some results on $k$th power residues. This will motivate the rather technical statement of Theorem \ref{mainthmtech}.

So, suppose $k\geq 1$ is fixed. As mentioned in the introduction, there exist moduli $Q$, such that for certain congruence classes of $N$, we have 
\begin{equation}\label{npkexeq}
    N-p^k\equiv 0\pmod{Q}
\end{equation}
for all but finitely many primes $p$. By Dirichlet's theorem for primes in arithmetic progressions, for any modulus $Q\geq 1$ there exist infinitely many primes with $p\equiv 1$ (mod $Q$) and thus $p^k\equiv 1\pmod{Q}$. Hence, the only way for \eqref{npkexeq} to occur is if $N\equiv 1$ (mod $Q$) and each $p$ with $(p,Q)=1$ satisfies $p^k\equiv 1$ (mod $Q$). In other words, we are interested in classifying all $Q$ such that the number of solutions to $x^k\equiv 1$ (mod $Q$) is equal to $\varphi(Q)$, where $\varphi$ is the Euler totient function. Here we note that it suffices to consider the case where $Q$ is a prime power, since then we can extend to all moduli $Q$ using the Chinese remainder theorem. To solve this problem, we appeal to the following classical result.

\begin{lemma}[{See e.g.\ \cite[p.\ 45]{ireland1990classical}}]\label{localoblem}
    Let $k\geq 1$, $q$ be prime, $m\geq 1$ and $a$ be a $k$th power residue mod $q^m$. Then, the number of solutions to $x^k\equiv a$ (mod $q^m$) for $x\in\mathbb{Z}/q^m\mathbb{Z}$ is
    \begin{equation*}
        \gcd(k,\varphi(q^m))
    \end{equation*}
    if $q$ is odd and
    \begin{equation*}
        \begin{cases}
            \gcd(k,2^{m-1}),& m\leq 2,\\
            1,&m\geq 3\text{ and $k$ odd},\\
            2\cdot\gcd(k,2^{m-2}),&m\geq 3\text{ and $k$ even},\\
        \end{cases}
    \end{equation*}
    if $q=2$.
\end{lemma}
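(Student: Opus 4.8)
The plan is to translate the problem into the language of the finite abelian group $G=(\Z/q^m\Z)^*$ and to compute fibers of the $k$th power endomorphism $\psi\colon G\to G$, $\psi(x)=x^k$. First I would note that all of the stated counts are at most $\varphi(q^m)$, so the relevant hypothesis is that $a$ is coprime to $q$; any solution $x$ of $x^k\equiv a\pmod{q^m}$ is then automatically coprime to $q$, and nothing is lost by working inside $G$. The solution set is thus exactly the fiber $\psi^{-1}(a)$. Since $G$ is finite abelian, $\psi$ is a group homomorphism, and the assumption that $a$ is a $k$th power residue says precisely that $a\in\psi(G)$; hence $\psi^{-1}(a)$ is a coset of $\ker\psi$ and so has cardinality $\abs{\ker\psi}$. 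The problem therefore reduces to counting the solutions of $x^k\equiv 1\pmod{q^m}$, i.e.\ to determining $\abs{\ker\psi}$ independently of $a$.

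Next I would invoke the structure of $G$. For $q$ odd, $G$ is cyclic of order $\varphi(q^m)=q^{m-1}(q-1)$, and in any cyclic group of order $n$ the equation $x^k=1$ has exactly $\gcd(k,n)$ solutions: writing $x=g^y$ for a generator $g$, the equation becomes $ky\equiv 0\pmod n$, a linear congruence with $\gcd(k,n)$ solutions modulo $n$. This immediately gives $\abs{\ker\psi}=\gcd(k,\varphi(q^m))$, the claimed value for odd $q$. The same cyclic computation handles $q=2$ with $m\le 2$, since there $G$ is cyclic of order $\varphi(2^m)=2^{m-1}$, yielding $\gcd(k,2^{m-1})$.

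The remaining and genuinely different case is $q=2$ with $m\ge 3$, where $G$ is no longer cyclic but instead $G\cong C_2\times C_{2^{m-2}}$, generated by $-1$ and $5$. Here I would count $\ker\psi$ factorwise, so that the number of solutions of $x^k=1$ is the product of the counts in each cyclic factor, namely $\gcd(k,2)\cdot\gcd(k,2^{m-2})$. When $k$ is odd both factors equal $1$, giving $\abs{\ker\psi}=1$; when $k$ is even the order-$2$ factor contributes $\gcd(k,2)=2$, giving $\abs{\ker\psi}=2\gcd(k,2^{m-2})$. These are exactly the two values recorded in the statement. The main obstacle is precisely this last case: one must correctly use the two-factor decomposition of $(\Z/2^m\Z)^*$ for $m\ge 3$ and keep track of the extra factor of $2$ coming from the order-$2$ summand, since treating $G$ as though it were cyclic would produce the wrong answer. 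Everything else is a routine consequence of the fiber-counting reduction together with the classical structure theorem for $(\Z/q^m\Z)^*$.
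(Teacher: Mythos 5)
Your proof is correct and follows essentially the same route as the paper's source for this lemma: the paper gives no in-text proof, citing Ireland--Rosen instead, and that classical treatment argues exactly as you do, reducing the count to $\lvert\ker\psi\rvert$ by coset-counting in $(\mathbb{Z}/q^m\mathbb{Z})^*$ and then using cyclicity for odd $q$ (and for $q=2$, $m\le 2$) together with the decomposition $\langle -1\rangle\times\langle 5\rangle$ when $q=2$, $m\ge 3$. Your case analysis, including the extra factor of $2$ from the order-$2$ summand when $k$ is even, reproduces the stated values exactly.
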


Noting that $\varphi(q^m)=q^{m-1}(q-1)$, it follows from Lemma \ref{localoblem} that the congruence $x^k\equiv 1$ (mod $q^m$) has $\varphi(q^m)$ solutions if and only if
\begin{align}
    q^{m-1}(q-1)&\mid k,\quad\text{if $q\neq 2$ or $m\leq 2$}\label{qeq1},\\
    2^{m-2}&\mid k,\quad \text{if $q=2$, $k$ is even and $m\geq 3$.}\label{qeq2}
\end{align}
In particular, by the Chinese remainder theorem, we have the following result.
\begin{proposition}\label{Qprop}
    For any $k\geq 1$, let $\mathcal{Q}=\{q_1,q_2,\ldots,q_{\ell}\}$ be a finite set of primes and $m_1,m_2,\ldots,m_{\ell}$ be nonnegative integers. If 
    \begin{equation*}
        Q=\prod_{1\leq i\leq\ell}q_i^{m_i}
    \end{equation*}
    then every prime $x$ with $x\notin\mathcal{Q}$ satisfies $x^k\equiv 1\pmod{Q}$ if and only if each pair $(q_i,m_i)$ satisfies \eqref{qeq1} and \eqref{qeq2}.
\end{proposition}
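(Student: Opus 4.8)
The plan is to reduce to the prime-power case via the Chinese remainder theorem (CRT) and then connect the hypothesis, which is stated only for primes $x$, to the count of solutions appearing in Lemma \ref{localoblem}, which concerns all residues. The bridge between these two viewpoints is Dirichlet's theorem on primes in arithmetic progressions.

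First I would observe that a prime $x\notin\mathcal{Q}$ is coprime to $Q$, so by CRT one has $x^k\equiv 1\pmod{Q}$ if and only if $x^k\equiv 1\pmod{q_i^{m_i}}$ for every $i$. Thus it suffices to prove, for each fixed $i$, that the statement ``every prime $x\notin\mathcal{Q}$ satisfies $x^k\equiv 1\pmod{q_i^{m_i}}$'' is equivalent to $(q_i,m_i)$ satisfying \eqref{qeq1} and \eqref{qeq2}. (Factors with $m_i=0$ are trivial on both sides and may be discarded, so I assume $m_i\geq 1$.) The key reformulation is that $x^k\equiv 1\pmod{q_i^{m_i}}$ holds for \emph{every} unit $x$ modulo $q_i^{m_i}$ exactly when this congruence attains the maximal number $\varphi(q_i^{m_i})$ of solutions, which by the discussion following Lemma \ref{localoblem} is precisely \eqref{qeq1} and \eqref{qeq2}.

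For sufficiency, assuming \eqref{qeq1} and \eqref{qeq2}, Lemma \ref{localoblem} gives that every residue coprime to $q_i$ satisfies $x^k\equiv 1\pmod{q_i^{m_i}}$; in particular every prime $x\notin\mathcal{Q}$ does, and CRT then yields $x^k\equiv 1\pmod{Q}$. For necessity, suppose every prime $x\notin\mathcal{Q}$ satisfies $x^k\equiv 1\pmod{Q}$. Fix $i$ and an arbitrary residue $a$ with $\gcd(a,q_i)=1$. By Dirichlet's theorem there are infinitely many primes $p\equiv a\pmod{q_i^{m_i}}$, and since only finitely many primes lie in $\mathcal{Q}$, I may choose such a $p\notin\mathcal{Q}$. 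Then $p^k\equiv 1\pmod{Q}$, hence $p^k\equiv 1\pmod{q_i^{m_i}}$, and therefore $a^k\equiv 1\pmod{q_i^{m_i}}$. As $a$ ranges over all units, the congruence has $\varphi(q_i^{m_i})$ solutions, so \eqref{qeq1} and \eqref{qeq2} hold.

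The main obstacle is the necessity direction: the hypothesis constrains only the sparse set of primes, whereas Lemma \ref{localoblem} is phrased in terms of all residues modulo $q_i^{m_i}$. Dirichlet's theorem is exactly what lets me realise an arbitrary unit $a$ as the reduction of some admissible prime $p$, closing this gap; the only care needed is to select $p$ outside the finite exceptional set $\mathcal{Q}$, which is possible because each coprime residue class contains infinitely many primes.
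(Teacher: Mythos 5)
Your proof is correct and follows essentially the same route as the paper: the classification of when $x^k\equiv 1\pmod{q^m}$ has the maximal number $\varphi(q^m)$ of solutions via Lemma \ref{localoblem}, the Chinese remainder theorem to assemble the prime-power moduli, and Dirichlet's theorem to pass from primes outside $\mathcal{Q}$ to arbitrary units. The paper leaves these steps compressed in the surrounding discussion (Dirichlet's theorem is invoked just before Lemma \ref{localoblem}), whereas you spell out the necessity direction explicitly, but the ingredients and structure are identical.
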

This explains the statement of Theorem \ref{mainthmtech}, whereby all $q^m$ satisfying \eqref{qeq1} and \eqref{qeq2} with $N\equiv 1$ (mod $q^m$) appear as factors of $N-p^k$ in \eqref{secondNeq}.

\subsection{Representations of $N$ as the sum of two $k$th powers}
Let $k\geq 2$. In order to prove the $\eta$ is $k$th power-free in Theorem \ref{mainthmtech}, we require sufficiently strong bounds on the number of solutions $(a,b)$ to the equation
\begin{equation}\label{sumkthpowers}
    N=a^k+b^k,
\end{equation}
where $(a,N)=(b,N)=1$ and $N\to\infty$. Such bounds have been well-studied in the literature and are often related to $d(N)$, the number of divisors of $N$ and $\omega(N)$, the number of distinct prime factors of $N$. We thus first recall standard bounds for $d(N)$ and $\omega(N)$, which will be used extensively throughout this paper.
\begin{lemma}[{\cite[Theorem 1]{nicolas1983majorations} and \cite[Th\'eor\`eme 11]{robin1983estimation}}]\label{divisorlem}
    For all $N\geq 3$,
    \begin{align*}
        2\leq d(N)\leq N^{1.07/\log\log N},\\
        1\leq\omega(N)\leq 1.3841\frac{\log N}{\log\log N}.
    \end{align*}
    In particular, $\omega(N)=o(\log N)$ and for any $\varepsilon>0$ we have $d(N)=O(N^{\varepsilon})$.
\end{lemma}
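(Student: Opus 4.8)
The plan is to treat the two lower bounds as immediate and concentrate on the upper bounds, which carry all the content. For the lower bounds, every $N\geq 2$ has the two distinct divisors $1$ and $N$, so $d(N)\geq 2$, and has at least one prime factor, so $\omega(N)\geq 1$.

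For the upper bound on $d(N)$, I would exploit multiplicativity. Writing $N=\prod_i p_i^{a_i}$ gives $d(N)=\prod_i(a_i+1)$ and $\log N=\sum_i a_i\log p_i$, so it suffices to bound $\sum_i\log(a_i+1)$. I would split the primes at a threshold $y=(\log N)^{1-\delta}$ for a small fixed $\delta>0$. For $p_i>y$ the elementary inequality $\log(a_i+1)\leq(\log 2)\,a_i$ (valid for all $a_i\geq 1$, since $\log(a+1)/a$ is maximised at $a=1$) bounds their total contribution by $\frac{\log 2}{\log y}\sum_{p_i>y}a_i\log p_i\leq\frac{\log 2}{1-\delta}\cdot\frac{\log N}{\log\log N}$. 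For $p_i\leq y$ there are at most $\pi(y)$ terms, and each exponent obeys $a_i\leq(\log N)/\log 2$, so each contributes at most $\log\log N+O(1)$, for a total of $O(\pi(y)\log\log N)=O((\log N)^{1-\delta})=o\!\left(\frac{\log N}{\log\log N}\right)$. Hence $\log d(N)\leq\frac{\log 2}{1-\delta}\cdot\frac{\log N}{\log\log N}+o\!\left(\frac{\log N}{\log\log N}\right)$, and taking $\delta$ small pushes the leading constant towards $\log 2\approx 0.693$, comfortably below $1.07$ for all large $N$.

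For the upper bound on $\omega(N)$, I would use the extremality of primorials. If $\omega(N)=r$, then $N$ is at least the product $P_r=p_1p_2\cdots p_r$ of the first $r$ primes, so $\log N\geq\vartheta(p_r)$, where $\vartheta$ denotes Chebyshev's function. Inverting this relation through the prime number theorem ($\vartheta(p_r)\sim p_r\sim r\log r$) yields $r\leq(1+o(1))\frac{\log N}{\log\log N}$, and sharper effective estimates of Rosser--Schoenfeld type refine the constant to $1.3841$. The two ``in particular'' assertions then follow at once: $\frac{\log N}{\log\log N}=o(\log N)$ gives $\omega(N)=o(\log N)$, while $1.07/\log\log N\to 0$ gives $d(N)=O(N^{\varepsilon})$ for every $\varepsilon>0$.

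The main obstacle is not the asymptotic shape of either bound, since both follow from the classical arguments above, but rather the requirement that the stated inequalities hold for \emph{every} $N\geq 3$ with the explicit constants $1.07$ and $1.3841$. Promoting the asymptotic estimates to fully effective inequalities over this range demands effective forms of the prime number theorem together with a finite numerical verification for small $N$, which is precisely the content of the cited works of Nicolas--Robin and Robin. I would therefore invoke those explicit constants directly rather than rederive them.
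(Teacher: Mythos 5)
Your proposal is correct and ultimately takes the same route as the paper: the paper gives no proof of this lemma at all, treating the explicit inequalities as direct citations of Nicolas--Robin and Robin, which is exactly where you land after correctly observing that the explicit constants $1.07$ and $1.3841$ over the full range $N\geq 3$ are the real content. Your supplementary sketches (the lower bounds, the Wigert-style splitting argument for $d(N)$, the primorial argument for $\omega(N)$, and the two ``in particular'' deductions) are sound but not needed beyond the citation.
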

We now give an upper bound for the number of solutions to \eqref{sumkthpowers}.
\begin{proposition}\label{thueprop}
    For any $\varepsilon>0$, the number of solutions $r_k(N)$ to \eqref{sumkthpowers} satisfies 
    \begin{equation*}
        r_k(N)=O_k(N^{\varepsilon}).
    \end{equation*}
\end{proposition}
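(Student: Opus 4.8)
The plan is to treat $k=2$ separately from $k\geq 3$: in the former case the equation is the classical sum of two squares, while in the latter $N=a^k+b^k$ is a Thue equation and genuinely Diophantine input is needed. In both cases the final step converts a bound phrased in terms of $d(N)$ or $\omega(N)$ into the desired $O_k(N^\varepsilon)$ via Lemma \ref{divisorlem}. As a preliminary reduction, I would record that the hypotheses $(a,N)=(b,N)=1$ force $\gcd(a,b)=1$, since any prime dividing both $a$ and $b$ would divide $a^k+b^k=N$, contradicting $(a,N)=1$. Thus it suffices to count \emph{primitive} solutions. For $k=2$, I would appeal to the arithmetic of the Gaussian integers: writing $N=a^2+b^2=(a+bi)(a-bi)$, the number of representations is governed by the factorization of $N$ in $\mathbb{Z}[i]$ and is bounded by $4\,d(N)$, whence $r_2(N)\ll d(N)=O(N^\varepsilon)$ by Lemma \ref{divisorlem}.

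For $k\geq 3$, the binary form $F(X,Y)=X^k+Y^k$ is separable, since $t^k+1$ is coprime to its derivative $kt^{k-1}$ over $\mathbb{Q}$ and so has distinct roots; hence $F$ has nonzero discriminant and $F(a,b)=N$ is a Thue equation of degree at least $3$. The naive geometric count is far too weak: for each $b$ in the admissible range $|b|\ll N^{1/k}$ the value of $a$ is essentially determined, giving only $r_k(N)\ll N^{1/k}$, and the whole point is to replace this exponent by an arbitrarily small $\varepsilon$ using the arithmetic of the equation. Concretely, I would factor $N=\prod_{\theta^k=-1}(a-b\theta)$ over the cyclotomic field $K=\mathbb{Q}(\zeta_{2k})$; grouping the roots into Galois orbits corresponds to the factorization $t^k+1=\prod_{d\mid 2k,\ d\nmid k}\Phi_d(t)$ into irreducible cyclotomic factors. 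Each primitive solution then produces, for every $d$, an algebraic integer $\alpha_d=a-b\zeta_d$ whose generated ideal divides $(N)$ in $\mathcal{O}_{\mathbb{Q}(\zeta_d)}$. The factors of degree $1$ and $2$ (which occur only for $\Phi_d$ with $\varphi(d)\leq 2$) contribute a divisor or a sum-of-two-squares-type constraint and are handled as above; the work is in the factors of degree at least $3$.

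For these, the number of ideal divisors of $(N)$ in a fixed number field is $O_\varepsilon(N^\varepsilon)$, so the count reduces to bounding, by a constant depending only on $k$, the number of primitive solutions that give \emph{associate} values of $\alpha_d$. This last step is the main obstacle, and it is precisely the quantitative content of Thue's theorem: the ambiguity up to units in $\mathcal{O}_{\mathbb{Q}(\zeta_d)}$ (whose group is infinite once $\varphi(d)\geq 3$) must be controlled using the size constraint $|a|,|b|\ll N^{1/k}$ together with a gap principle and Baker's theory of linear forms in logarithms (or the Thue–Siegel–Roth method). Rather than redeveloping this machinery, I would invoke the known uniform bounds on the number of primitive solutions of Thue equations, due to Bombieri–Schmidt, Evertse and Stewart, which apply to forms of nonzero discriminant and yield $r_k(N)\ll_k 2^{\omega(N)}$.

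Finally, combining the two cases and applying the estimate $\omega(N)=o(\log N)$ from Lemma \ref{divisorlem}, we obtain $2^{\omega(N)}=N^{o(1)}$, so that $r_k(N)=O_k(N^\varepsilon)$ for every $\varepsilon>0$ and all $k\geq 2$, as claimed. The only genuinely deep ingredient is the per-ideal (equivalently, Thue-equation) bound for $k\geq 3$; everything else is bookkeeping with the divisor and prime-counting estimates already available in Lemma \ref{divisorlem}.
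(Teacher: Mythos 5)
Your proposal is correct and takes essentially the same route as the paper: reduce to coprime solutions, bound $r_2(N)$ by a constant multiple of $d(N)$ (the paper cites Estermann's $r_2(N)\leq 2d(N)$ where you sketch the Gaussian-integer argument), and for $k\geq 3$ invoke the uniform Bombieri--Schmidt-type bound $r_k(N)\ll_k C^{\omega(N)}$ for Thue equations, finishing with the estimates of Lemma \ref{divisorlem}. Your cyclotomic/unit-group discussion is motivational scaffolding that the paper omits, but the operative citation and conclusion are identical.
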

\begin{proof}
    First, as proven by Estermann \cite{estermann1931einige},
    \begin{equation*}
        r_2(N)\leq 2d(N),
    \end{equation*}
    which is $O(N^{\varepsilon})$ by Lemma \ref{divisorlem}. For $k\geq 3$, \eqref{sumkthpowers} is a special case of the Thue equation. Here it suffices to consider $(a,b)=1$ since otherwise $N$ would share a factor with $a$ and $b$. Thus, by the main theorem of \cite{bombieri1987thue}, the equation \eqref{sumkthpowers} has
    \begin{equation*}
        r_k(N)\ll k^{1+{\omega(N)}}
    \end{equation*}
    solutions, which again reduces to $O_k(N^{\varepsilon})$ by Lemma \ref{divisorlem}.
\end{proof}

\subsection{The weighted Diamond-Halberstam-Richert sieve}
We now introduce a weighted sieve of Diamond, Halberstam and Richert, which we shall abbreviate as the weighted ``DHR" sieve. The DHR sieve is commonly used for higher dimensional sieve problems, and most of its theory is given in the monograph \cite{diamond2008higher}. In particular, the weighted sieve we apply is introduced in \cite[Chapter 11]{diamond2008higher} and is a variant of a weighted sieve due to Richert \cite{richert1969selberg}. We note that in some scenarios there may be slightly better sieve methods available (see e.g.\ \cite{franze2011sifting}). However, we have chosen to use the DHR sieve as it is simple to apply and has been comprehensively developed in the literature.

As in all sieve-theoretic problems, we wish to sieve some finite set of integers $\mathcal{A}$ by an infinite set of primes $\mathcal{P}$, such that no element of $\mathcal{A}$ is divisible by a prime in the complement $\overline{\mathcal{P}}=\{p\ \text{prime}:p\notin\mathcal{P}\}$. For the proof of Theorem \ref{mainthmtech}, we choose $\mathcal{A}$ and $\mathcal{P}$ to be
\begin{equation*}
    \mathcal{A}=\{N-p^k:p<N^{1/k},\:(p,N)=1\}\quad\text{and}\quad\mathcal{P}=\{p\ \text{prime}:(p,N)=1\}
\end{equation*}
or some variant thereof, depending on the value of $k$ and congruence class of $N$.

Further, for square-free integers $d$, let $X>1$ (independent of $d$) and $g(d)$ be a multiplicative function with $0\leq g(p)<p$ for all primes $p\in\mathcal{P}$. We then define
\begin{equation*}
    |\mathcal{A}_d|=\{a\in \mathcal{A}:d\mid a\}
\end{equation*}
and $r(d)$ to be such that
\begin{equation}\label{Adeqgen}
    |\mathcal{A}_d|=\frac{g(d)}{d}X+r(d),\qquad (d,\overline{\mathcal{P}})=1,
\end{equation}
where by $(d,\overline{\mathcal{P}})=1$ we mean that $(d,p)=1$ for all $p\in\overline{\PP}$.

To apply the weighted DHR sieve, we need four conditions to be satisfied. These can be found in \cite[Chapter 11.1]{diamond2008higher} but we state them here (with slightly different notation) for clarity. 

Firstly, we require a standard condition on $g(p)$ denoted $\mathbf{\Omega}(\kappa,L)$:
\begin{equation}\tag{$\mathbf{\Omega}(\kappa,L)$}\label{omegacon}
    \prod_{\substack{z_1\leq p<z_2\\p\in\mathcal{P}}}\left(1-\frac{ g(p)}{p}\right)^{-1}<\left(\frac{\log z_2}{\log z_1}\right)^{\kappa}\left\{1+\frac{L}{\log z_1}\right\},\quad \text{for}\ z_2>z_1\geq 2.
\end{equation}
Here, $L>0$ and $\kappa$ is called the \emph{dimension} of the sieve.

The next condition is an averaged bound for the remainder term $r(d)$. Namely, for some $B\geq 1$, we require
\begin{equation}\tag{$\mathbf{R_0(\kappa,\tau)}$}\label{r0con}
    \sum_{\substack{d<X^{\tau}/(\log X)^{B}\\ (d,\overline{\mathcal{P}})=1}}\mu^2(d)4^{\omega(d)}|r(d)|\ll\frac{X}{(\log X)^{\kappa+1}},
\end{equation}
where $\mu$ is the M\"obius function, $\omega(d)$ is the number of distinct prime divisors of $d$, the \emph{level of distribution} $\tau$ is preferably as large as possible, and $\kappa$ is as in \eqref{omegacon}.

For the third condition, we require that $\mathcal{A}$ does not have too many elements divisible by a square:
\begin{equation}\tag{$\mathbf{Q_0}$}\label{q0con}
    \sum_{\substack{z\leq q<y\\q\in\mathcal{P}}}|\mathcal{A}_{q^2}|\ll\frac{X\log X}{z}+y,\qquad 2\leq z<y.
\end{equation}
Finally, we need a measure of the largest element of $\mathcal{A}$:
\begin{equation}\tag{$\mathbf{M_0(\tau,\mu_0)}$}\label{m0con}
    \max_{a\in \mathcal{A}}|a|\leq X^{\tau\mu_0},
\end{equation}
with $\mu_0$ preferably as small as possible.

The weighted DHR sieve is then given as follows.

\begin{lemma}[{\cite[Theorem 11.1]{diamond2008higher}}]\label{DHRsieve}
    Given a sieve problem of dimension $\kappa\geq 1$ concerning a finite set of integers $\mathcal{A}$ and an infinite set of primes $\mathcal{P}$, suppose that the properties \eqref{omegacon}, \eqref{r0con}, \eqref{q0con} and \eqref{m0con} are satisfied. As defined in \cite[Theorem 6.1]{diamond2008higher}, let $f_{\kappa}$ and $F_{\kappa}$ denote the lower and upper sifting functions in the DHR sieve, and $\beta_{\kappa}$ denote the sifting limit. Let $r$ be a natural number satisfying 
    \begin{equation*}
        r>N(\kappa,\mu_0,\tau; u,v),
    \end{equation*}
    where 
    \begin{equation}\label{Ngenexp}
        N(\kappa,\mu_0,\tau;u,v):=\tau\mu_0u-1+\frac{\kappa}{f_{\kappa}(\tau v)}\int_{u}^vF_{\kappa}\left(v\left(\tau-\frac{1}{s}\right)\right)\left(1-\frac{u}{s}\right)\frac{\mathrm{d}s}{s},
    \end{equation}
    with $u$ and $v$ satisfying $\tau v>\beta_{\kappa}$ and $1/\tau<u<v$. Then
    \begin{equation}\label{repasym}
        |\{a\in\mathcal{A}:\Omega(a)\leq r,\: P^-(a)\geq X^{1/v}\}|\gg_L\  X\cdot \prod_{\substack{p<X^{1/v}\\ p\in\mathcal{P}}}\left(1-\frac{g(p)}{p}\right),\qquad X\to\infty
    \end{equation}
    where $P^-(a)$ denotes the smallest prime factor of $a$. In particular, if $X$ is sufficiently large and $L$ is independent of $X$, then $\mathcal{A}$ contains integers having at most $r$ prime factors (counting multiplicity), with each such factor greater than $X^{1/v}$.
\end{lemma}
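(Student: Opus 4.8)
The plan is to treat Lemma~\ref{DHRsieve} as a direct application of \cite[Theorem 11.1]{diamond2008higher}: once the four hypotheses \eqref{omegacon}, \eqref{r0con}, \eqref{q0con} and \eqref{m0con} have been verified for the chosen set \(\mathcal{A}\), prime set \(\mathcal{P}\), density function \(g\) and parameter \(X\), the conclusion is quoted verbatim from the monograph. In this sense the substantive work is not to reprove the sieve but to exhibit these four conditions for the specific data arising from \(N-p^k\); the present statement merely packages the output in the form we shall use later. Nevertheless, I would sketch the mechanism so that the role played by each hypothesis, and by the threshold \(N(\kappa,\mu_0,\tau;u,v)\), is transparent.

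The underlying device is a weighted sieve in the style of Richert. Setting \(z=X^{1/v}\), one attaches to each \(a\in\mathcal{A}\) with \(P^-(a)\geq z\) a logarithmic weight \(w(a)\) consisting of \(1\) minus a correction that grows with the number of prime factors of \(a\) lying in an intermediate range \([z,X^{1/u})\). The weights are calibrated so that \(w(a)>0\) forces \(\Omega(a)\leq r\): an element with more than \(r\) prime factors accumulates enough correction to render its weight non-positive, the bookkeeping relying on the fact that every surviving prime factor exceeds \(z\) together with the size bound \eqref{m0con}, which controls the total number of prime factors an element of \(\mathcal{A}\) can carry.

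Next I would estimate the weighted sum \(S=\sum_{P^-(a)\geq z}w(a)=S_1-S_2\). The leading term \(S_1\) is the sifted count \(|\{a\in\mathcal{A}:p\mid a\Rightarrow p\geq z\}|\), which is bounded below by the DHR lower-bound sieve through the function \(f_\kappa\); here \eqref{omegacon} supplies the dimension \(\kappa\) and the product \(\prod_{p<z,\,p\in\mathcal{P}}(1-g(p)/p)\), while \eqref{r0con} guarantees that the accumulated remainders \(r(d)\) are negligible against the main term up to level of distribution \(\tau\). The correction term \(S_2\), a weighted count of pairs (element, prime divisor), is bounded above by the DHR upper-bound sieve through \(F_\kappa\), with the contribution of repeated prime factors \(p^2\mid a\) controlled by \eqref{q0con}. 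Combining the lower bound for \(S_1\) with the upper bound for \(S_2\) gives \(S\gg_L X\prod_{p<z,\,p\in\mathcal{P}}(1-g(p)/p)\) precisely when the parameters satisfy \(r>N(\kappa,\mu_0,\tau;u,v)\); positivity of \(S\) then yields at least one \(a\) with \(w(a)>0\), hence with \(\Omega(a)\leq r\) and every prime factor exceeding \(z=X^{1/v}\), as claimed.

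The main obstacle—and the reason one defers to \cite[Theorem 11.1]{diamond2008higher} rather than reproving the estimate in place—is the optimization producing the explicit threshold \(N(\kappa,\mu_0,\tau;u,v)\) recorded in \eqref{Ngenexp}. This demands sharp quantitative control of the sifting functions \(f_\kappa\) and \(F_\kappa\) near the sifting limit \(\beta_\kappa\), together with the evaluation of the integral balancing the gain from the lower-bound sieve against the loss from the weighted correction. Carrying this through from first principles would reproduce a substantial part of \cite[Chapters 6 and 11]{diamond2008higher}, so the efficient route is to invoke the theorem as stated and to devote our own effort to checking the four hypotheses for the sets \(\mathcal{A}\) and \(\mathcal{P}\) attached to the representation \(N=p^k+\eta\).
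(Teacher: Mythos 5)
Your proposal takes the same route as the paper: this lemma is quoted directly from \cite[Theorem 11.1]{diamond2008higher} with no proof given in the paper itself, the real work being the verification of \eqref{omegacon}, \eqref{r0con}, \eqref{q0con} and \eqref{m0con} for the specific sets $\mathcal{A}$ and $\mathcal{P}$, exactly as you say. Your additional sketch of the Richert-style weighting mechanism is accurate and harmless, but not part of what the paper supplies or needs.
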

After setting up the relevant sieve problem, the goal will be to compute \eqref{Ngenexp}. Although one can numerically compute \eqref{Ngenexp} for select values of $k$, here we use the following bound which makes our computations easy to reproduce and simple to extend to large $k$.

\begin{lemma}\label{Nklem}
    Let $N(\kappa,\mu_0,\tau,u,v)$ be as in Lemma \ref{DHRsieve} with $\kappa>1$. Let $\zeta$ and $\xi$ be parameters with $0<\zeta<\beta_{\kappa}$ and $\xi\geq\beta_{\kappa}$. Also let
     \begin{equation}\label{tauvtauu}
        \tau v=\xi+\frac{\xi}{\zeta}-1\qquad\text{and}\qquad\tau u=\zeta+1-\frac{\zeta}{\xi}.
    \end{equation}
    Then $N(\kappa,\mu_0,\tau;u,v)<\widetilde{N}(\kappa,\mu_0,\tau;\zeta,\xi)$, where
    \begin{align}\label{tildeNeq}
        &\widetilde{N}(\kappa,\mu_0,\tau;\zeta,\xi):=\\
        &\quad(1+\zeta)\mu_0-1-\kappa-(\mu_0-\kappa)\frac{\zeta}{\xi}+\left\{\kappa+\zeta\left(1-\frac{f_\kappa(\xi)}{f_\kappa(\xi+\xi/\zeta-1)}\right)\right\}\log\left(\frac{\xi}{\zeta}\right).\notag
    \end{align}
\end{lemma}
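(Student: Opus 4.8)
The plan is to bound the integral term in the definition \eqref{Ngenexp} from above by exploiting the explicit substitution \eqref{tauvtauu}, thereby converting the optimization over $(u,v)$ into one over the more tractable parameters $(\zeta,\xi)$. The key structural fact I would lean on is monotonicity of the sifting functions: in the DHR theory $F_\kappa$ is decreasing and $f_\kappa$ is increasing on the relevant range, with $f_\kappa$ vanishing precisely at the sifting limit $\beta_\kappa$. First I would verify that the chosen $\tau v = \xi + \xi/\zeta - 1$ and $\tau u = \zeta + 1 - \zeta/\xi$ satisfy the admissibility hypotheses of Lemma \ref{DHRsieve}, namely $\tau v > \beta_\kappa$ (immediate since $\xi \geq \beta_\kappa$ and $\xi/\zeta > 1$) and $1/\tau < u < v$; the latter amounts to checking $\tau u > 1$ and $\tau u < \tau v$, which follow from $0 < \zeta < \beta_\kappa \leq \xi$ by elementary inequalities.

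Next I would attack the integral
\[
I := \frac{\kappa}{f_\kappa(\tau v)}\int_u^v F_\kappa\!\left(v\left(\tau - \tfrac{1}{s}\right)\right)\left(1 - \tfrac{u}{s}\right)\frac{\mathrm{d}s}{s}.
\]
The idea is to bound the factor $F_\kappa\bigl(v(\tau - 1/s)\bigr)$ crudely. Since $s \leq v$ we have $v(\tau - 1/s) \geq v\tau - v/s \cdot \text{(something)}$; more usefully, the argument $v(\tau - 1/s)$ is smallest at $s = u$, where it equals $v\tau - v/u = \tau v (1 - u/v)$, and one checks via \eqref{tauvtauu} that this lower endpoint value of the argument is exactly $\xi$. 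By the monotonicity of $F_\kappa$ and the standard relation $F_\kappa(\xi) f_\kappa$-type identities, I would replace $F_\kappa$ by a bound involving $f_\kappa(\xi)$ and $f_\kappa(\tau v) = f_\kappa(\xi + \xi/\zeta - 1)$. The remaining weight $\int_u^v (1 - u/s)\,\mathrm{d}s/s = \log(v/u) - u(1/u - 1/v) = \log(v/u) - 1 + u/v$ is an elementary integral; using \eqref{tauvtauu} one computes $v/u$ and $u/v$ explicitly in terms of $\zeta$ and $\xi$, and the logarithm collapses to $\log(\xi/\zeta)$ after cancellation. Assembling these pieces yields the bracketed coefficient of $\log(\xi/\zeta)$ in \eqref{tildeNeq}.

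Finally I would handle the non-integral contribution $\tau\mu_0 u - 1$ to \eqref{Ngenexp}. Substituting $\tau u = \zeta + 1 - \zeta/\xi$ gives $\tau\mu_0 u = \mu_0(1 + \zeta) - \mu_0 \zeta/\xi$, and combining with the $-1$ and the $-\kappa + \kappa\zeta/\xi$ pieces extracted from the integral bound produces the terms $(1+\zeta)\mu_0 - 1 - \kappa - (\mu_0 - \kappa)\zeta/\xi$ in \eqref{tildeNeq}. The strict inequality $N < \widetilde{N}$ (rather than $\leq$) is inherited from the strictness of the $F_\kappa$ monotonicity bound over a set of positive measure. I expect the main obstacle to be pinning down the precise behaviour of $F_\kappa$ and $f_\kappa$ on the interval to justify the replacement cleanly — in particular confirming that the endpoint argument genuinely equals $\xi$ and that the DHR functional relations (from \cite[Theorem 6.1]{diamond2008higher}) give the stated $f_\kappa(\xi)/f_\kappa(\xi + \xi/\zeta - 1)$ ratio rather than some other combination; the algebraic simplifications, while tedious, are routine once the analytic bound on $F_\kappa$ is correctly set up.
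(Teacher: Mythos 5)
Your plan goes wrong at its central step, and the error is already visible in your endpoint computation. At $s=u$ the argument of $F_\kappa$ is $v\tau - v/u$; since \eqref{tauvtauu} gives $v/u = \xi/\zeta$ and $\tau v = \xi + \xi/\zeta - 1$, this equals $\xi - 1$, \emph{not} $\xi$ (the identity you use, $v\tau - v/u = \tau v(1-u/v)$, is false: $\tau v(1-u/v) = \tau v - \tau u$). This off-by-one is not cosmetic. The reason \eqref{tildeNeq} features $f_\kappa(\xi)$ rather than $F_\kappa(\xi-1)$ is the DHR differential--difference relation $\left(t^{\kappa}f_{\kappa}(t)\right)' = \kappa t^{\kappa-1}F_{\kappa}(t-1)$, which shifts arguments by exactly $1$ and converts $F_\kappa$ into a derivative of $f_\kappa$; no monotonicity argument ever turns an upper bound for $F_\kappa$ into the ratio $f_\kappa(\xi)/f_\kappa(\tau v)$.

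More decisively, no pointwise replacement of $F_\kappa$ inside the integral of \eqref{Ngenexp} can produce \eqref{tildeNeq}, because the prefactor $1/f_\kappa(\tau v)$ then survives on the main term. Since the argument $v(\tau-1/s)$ is increasing in $s$ and $F_\kappa$ is decreasing, the best such bound is
\begin{equation*}
    \frac{\kappa}{f_{\kappa}(\tau v)}\int_{u}^{v}F_{\kappa}\left(v\left(\tau-\frac{1}{s}\right)\right)\left(1-\frac{u}{s}\right)\frac{\mathrm{d}s}{s}
    \;\leq\;
    \frac{\kappa F_{\kappa}(\xi-1)}{f_{\kappa}(\tau v)}\left(\log\frac{\xi}{\zeta}-1+\frac{\zeta}{\xi}\right),
\end{equation*}
whose coefficient of $\log(\xi/\zeta)$ exceeds $\kappa F_\kappa(\xi-1)\geq\kappa F_\kappa(\xi -1)f_\kappa(\tau v)^{-1}f_\kappa(\tau v) > \kappa$, whereas the corresponding coefficient in \eqref{tildeNeq} is $\kappa+\zeta\left(1-f_\kappa(\xi)/f_\kappa(\tau v)\right)\leq\kappa+\zeta$. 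Taking $\xi=\beta_\kappa$ (so $f_\kappa(\xi)=0$) and $\zeta$ small (so $f_\kappa(\tau v)$ is near $1$), the crude bound carries leading coefficient about $\kappa F_\kappa(\beta_\kappa-1)>\kappa+\zeta$, strictly weaker than the claim; your argument therefore cannot close. The actual derivation --- which the paper does not reproduce, since its proof is a citation of \cite[Section 11.4]{diamond2008higher} with a typo corrected --- substitutes $t=v(\tau-1/s)$, uses the differential--difference relation to write $\kappa F_\kappa(t)\,\mathrm{d}t$ as an exact derivative of $w^{\kappa}f_\kappa(w)$ with $w=t+1$, and integrates by parts; the choice \eqref{tauvtauu} is engineered precisely so that the boundary term at $w=\xi$ vanishes, because $1/(\tau v-\xi+1)=\zeta/\xi=u/v$, after which the two-sided monotonicity bounds $f_\kappa(\xi)\leq f_\kappa(w)\leq f_\kappa(\tau v)$ yield the bracketed coefficient. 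Your admissibility checks, the elementary integral $\int_u^v(1-u/s)\,\mathrm{d}s/s=\log(v/u)-1+u/v$, and your treatment of $\tau\mu_0u-1$ are all correct, but they are the routine parts; the integration-by-parts mechanism is the missing idea.
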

\begin{proof}
    This inequality is proven in \cite[Section 11.4]{diamond2008higher}. Note however that there is a small typo in \cite{diamond2008higher}, whereby the authors (erroneously) set $\tau v=\zeta+\xi/\zeta-1$ and thereby give a slightly different expression for $\widetilde{N}$.
\end{proof}

In order to apply Lemma \ref{Nklem} we require explicit values or bounds for $f_{\kappa}(\xi)$ and $\beta_{\kappa}$. For small $\kappa$, we achieve this by using Booker and Browning's supplementary code \cite{bookercode} and tables \cite{bookertable} for the paper \cite{booker2016square}. For larger values of $\kappa$, we instead use the following result.

\begin{lemma}\label{largebetalem}
    Let $\kappa\geq 2$ be a half-integer or integer, and $\beta_{\kappa}$ be the sifting limit of the DHR sieve as in Lemma \ref{DHRsieve}. Then
    \begin{equation}\label{betaupper}
        2\kappa<\beta_{\kappa}<3.75\kappa
    \end{equation}
    and
    \begin{equation}\label{fbeta}
        f_{\kappa}(\beta_{\kappa})=0.
    \end{equation}
\end{lemma}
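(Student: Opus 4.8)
The plan is to treat the two assertions separately: the identity \eqref{fbeta} is soft and follows from the construction of the sifting functions, whereas the two-sided bound \eqref{betaupper} is the substantive part and requires quantitative control of $\beta_\kappa$ as a function of $\kappa$, handled by splitting into an asymptotic regime and a finite range of small values.

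For \eqref{fbeta}, recall that in the DHR framework of \cite[Chapter 6]{diamond2008higher} the lower sifting function $f_\kappa$ is continuous and nondecreasing past the sifting limit, and $\beta_\kappa$ is by definition the infimum of those $u$ with $f_\kappa(u)>0$; equivalently $f_\kappa(u)\le 0$ for $u\le\beta_\kappa$ and $f_\kappa(u)>0$ for $u>\beta_\kappa$. Approaching $\beta_\kappa$ from either side and invoking continuity forces $f_\kappa(\beta_\kappa)=0$. So the first step is simply to record this sign characterization precisely and deduce \eqref{fbeta}.

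For \eqref{betaupper}, the approach I would take exploits the known linear growth of $\beta_\kappa$ in $\kappa$. There is an asymptotic $\beta_\kappa=(c+o(1))\kappa$ as $\kappa\to\infty$ with a fixed constant $c$, which one verifies to satisfy $2<c<3.75$, arising from the analysis of the differential-difference equations governing $f_\kappa$ and its adjoint. First I would fix an explicit threshold $\kappa_0$ beyond which the error term is small enough that $2\kappa<\beta_\kappa<3.75\kappa$ holds directly. Then, for the finitely many half-integers and integers with $2\le\kappa\le\kappa_0$, I would verify both inequalities from the tabulated values of $\beta_\kappa$ in Booker and Browning's data \cite{bookertable}, already used elsewhere in the paper, checking in each case that the listed $\beta_\kappa$ lies strictly between $2\kappa$ and $3.75\kappa$.

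The hard part will be making the upper bound $\beta_\kappa<3.75\kappa$ effective and uniform in $\kappa$: this amounts to showing that $f_\kappa$ has already turned positive by $u=3.75\kappa$, and lower bounds on the DHR lower sifting function are the delicate direction. I would handle this either by importing an explicit, $\kappa$-uniform upper bound for the sifting limit from \cite{diamond2008higher}, or by analyzing the sign of $f_\kappa(3.75\kappa)$ directly through the integral relation defining $f_\kappa$ near its first positive values. By contrast, the lower bound $\beta_\kappa>2\kappa$ should come more readily, since it only requires $f_\kappa(2\kappa)\le 0$, i.e.\ that the sieve does not yet yield a positive lower bound at level $2\kappa$, and upper bounds on $f_\kappa$ are the easier direction; the strictness for $\kappa\ge 2$ reflects that the boundary value $\beta_1=2$ is attained only in dimension one. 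Finally, care is needed at the junction between the asymptotic range and the tabulated range so that no value of $\kappa$, in particular no half-integer, is omitted.
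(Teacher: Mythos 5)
Your handling of \eqref{fbeta} is fine and is exactly the paper's argument: the identity follows directly from the definition of the lower sifting function $f_{\kappa}$ and the sifting limit $\beta_{\kappa}$ in \cite[Theorem 6.1]{diamond2008higher}, via the sign characterization plus continuity that you describe.

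The gap is in \eqref{betaupper}: your primary strategy never actually establishes the bound. You invoke an asymptotic $\beta_{\kappa}=(c+o(1))\kappa$ with $2<c<3.75$ and propose to ``fix an explicit threshold $\kappa_0$ beyond which the error term is small enough,'' but no effective error term is provided (nor is a source given for one), so the threshold $\kappa_0$ cannot be fixed; and the finite-range check against the Booker--Browning tables \cite{bookertable} can only ever cover the finitely many dimensions those tables contain, not an unspecified $\kappa_0$'s worth of integers and half-integers. You partially recognize this, but your two fallback options are respectively exactly the missing ingredient (importing a $\kappa$-uniform explicit bound from \cite{diamond2008higher}) and an unspecified direct analysis of the sign of $f_{\kappa}(3.75\kappa)$, which is not carried out. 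In addition, the lower bound is not as soft as you suggest: showing $f_{\kappa}(2\kappa)\le 0$, i.e.\ that the sieve gives no positive lower bound at level $2\kappa$ for every $\kappa\ge 2$, is itself a quantitative theorem about the differential-difference system, not a formality. The paper's proof is a pure citation, and both directions are known uniformly in $\kappa$: the two-sided bound $2\kappa<\beta_{\kappa}<3.75\kappa$ follows from \cite[Proposition 17.3]{diamond2008higher} together with \cite[Theorem 3.5]{diamond1993sieve}, which makes the whole asymptotic-plus-tables scaffolding unnecessary. To repair your write-up, replace the threshold/table argument by these two references (or prove explicit analogues of them), at which point your proof collapses to the paper's.
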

\begin{proof}
    The bound \eqref{betaupper} follows from \cite[Proposition 17.3]{diamond2008higher} and \cite[Theorem 3.5]{diamond1993sieve}. Then, \eqref{fbeta} follows directly from the definition of $f_{\kappa}$ (see \cite[Theorem 6.1]{diamond2008higher}).
\end{proof}

\subsection{The Bombieri--Vinogradov theorem and \hbox{Elliott--Halberstam conjecture}}
Let
\begin{equation*}
    \pi(x)=\#\{p\ \text{prime}:p\leq x\}\quad\text{and}\quad\pi(x;d,a)=\#\{p\ \text{prime}:p\leq x,\: p\equiv a\ \text{(mod $d$)}\}
\end{equation*}
be the standard prime counting functions. As in many sieve-theoretic applications, we require the Bombieri-Vinogradov theorem to bound the average value of
\begin{equation*}
    \left|\pi(x;d,a)-\frac{\pi(x)}{\varphi(d)}\right|
\end{equation*}
with the goal of satisfying the sieve condition \eqref{r0con}. A standard statment of the Bombieri--Vinogradov theorem is as follows (see e.g.\ \cite[Theorem 9.18]{friedlander2010opera}).
\begin{lemma}[Bombieri--Vinogradov]\label{bomvinthm}
    For any $A>0$, one has 
    \begin{equation}\label{bomvineq}
        \sum_{d\leq D}\max_{(a,d)=1}\left|\pi(x;d,a)-\frac{\pi(x)}{\varphi(d)}\right|\ll_{h,A}\frac{x}{(\log x)^A},
    \end{equation}
    where $D=x^{\frac{1}{2}}/(\log x)^B$, for some constant $B>0$ depending on $A$. 
\end{lemma}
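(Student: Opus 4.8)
This is the classical Bombieri--Vinogradov theorem; in the present paper it is simply cited, but I would reproduce the standard large-sieve argument (as in \cite{friedlander2010opera}, or Davenport's account) along the following lines. First I would pass from $\pi$ to the Chebyshev-type count $\psi(x;d,a)=\sum_{n\leq x,\,n\equiv a\,(d)}\Lambda(n)$ by partial summation, the difference (including the contribution of proper prime powers) being negligible after summation over $d\leq D$. By orthogonality of Dirichlet characters,
\begin{equation*}
    \psi(x;d,a)-\frac{\psi(x)}{\varphi(d)}=\frac{1}{\varphi(d)}\sum_{\chi\neq\chi_0}\overline{\chi}(a)\,\psi(x,\chi),\qquad \psi(x,\chi)=\sum_{n\leq x}\chi(n)\Lambda(n),
\end{equation*}
so the main term $\psi(x)/\varphi(d)$ is carried by the principal character and the error is governed by the non-principal $\psi(x,\chi)$. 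Taking absolute values and replacing each $\chi\pmod d$ by the primitive character $\chi^{*}\pmod r$, with $r\mid d$, that induces it (up to a negligible error from prime powers dividing $d$), I would reorganise the sum over $d\leq D$ into a sum over primitive characters of conductor $r\leq D$, each weighted by $\sum 1/\varphi(d)$ over the multiples $d$ of $r$.

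The analytic heart is then to bound
\begin{equation*}
    \sum_{r\leq D}\frac{1}{\varphi(r)}{\sum_{\chi\bmod r}^{*}}\ \max_{y\leq x}\left|\psi(y,\chi)\right|,
\end{equation*}
and the decisive tool here is the large sieve inequality for primitive Dirichlet characters. To feed the von Mangoldt function into a bilinear form to which the large sieve applies, I would expand $\Lambda$ using Vaughan's identity (or equivalently Heath--Brown's identity), splitting $\psi(x,\chi)$ into ``Type I'' sums $\sum_{m}a_m\sum_{n}\chi(mn)$, in which $m$ ranges over a short interval and the inner variable is essentially unrestricted, and ``Type II'' bilinear sums $\sum_{m}\sum_{n}a_m b_n\chi(mn)$ with both variables in genuine ranges. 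The Type I terms are controlled by bounding a short character sum and applying the large sieve to the outer variable; the Type II terms are handled by Cauchy--Schwarz followed by the large sieve (the duality/double large sieve), which is precisely where the bilinear structure is exploited. The troublesome $\max_{y\leq x}$ is absorbed either by a completion device of Gallagher type or by folding a unimodular factor into the coefficients.

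The level of distribution $D=x^{1/2}/(\log x)^{B}$ emerges from the large sieve bound, which contributes a factor $Q^{2}+N$ with $Q\asymp D$ and $N\asymp x$: the two terms balance exactly at $D\asymp x^{1/2}$, so each contributes $\ll x$, and after accounting for the saved $\varphi$-factors and gaining $(\log x)^{-A}$ through a suitable choice of $B$ one arrives at \eqref{bomvineq}. I expect the main obstacle to be the Type II estimate together with this balance: the threshold $x^{1/2}$ is intrinsic to the large sieve and cannot be crossed by this method, which is exactly why pushing $D$ to $x^{\theta}$ with $\theta>1/2$ is the open Elliott--Halberstam conjecture invoked elsewhere in the paper. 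A secondary technical point is the range of small moduli $r\leq(\log x)^{C}$, where the large sieve is inefficient; there I would instead invoke the Siegel--Walfisz theorem, at the cost of rendering the implied constant ineffective through Siegel's theorem on the exceptional zero.
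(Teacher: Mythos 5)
Your proposal is correct: it is the standard large-sieve proof of Bombieri--Vinogradov (reduction from $\pi$ to $\psi$, orthogonality and passage to primitive characters, Vaughan's identity giving Type I/II sums, the large sieve with the $Q^2+N$ balance forcing $D\asymp x^{1/2}$, and Siegel--Walfisz for small moduli with the attendant ineffectiveness), which is exactly the argument carried out in the reference the paper cites. The paper itself gives no proof of this lemma --- it is quoted directly from \cite[Theorem 9.18]{friedlander2010opera} --- so your sketch supplies what the paper deliberately outsources, and it does so accurately.
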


For the purposes of Theorems \ref{mainthmsimple} and \ref{mainthmtech}, we refer to the Elliott--Halberstam conjecture as the following improvement over Lemma \ref{bomvinthm}.

\begin{conjecture}[Elliott--Halberstam]\label{ellhalbcon}
    For any $A>0$ and $\varepsilon>0$, we have
    \begin{equation}\label{ellhalbeq}
        \sum_{d\leq D}\max_{(a,d)=1}\left|\pi(x;d,a)-\frac{\pi(x)}{\varphi(d)}\right|\ll_{\varepsilon,A}\frac{x}{(\log x)^A},
    \end{equation}
    where $D=x^{1-\varepsilon}$.
\end{conjecture}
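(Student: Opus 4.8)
The statement above is the Elliott--Halberstam conjecture, which is one of the central open problems in analytic number theory rather than a theorem; the authors invoke it purely as a conditional hypothesis, to be contrasted with the unconditional Lemma \ref{bomvinthm}. Consequently no proof exists, and what follows is a sketch of the only known line of attack together with the reason it stalls. The objective would be to push the level of distribution for primes in arithmetic progressions from the Bombieri--Vinogradov range $D=x^{1/2}/(\log x)^B$ all the way out to $D=x^{1-\varepsilon}$, while still averaging over the moduli $d\leq D$ and taking the worst residue class $a$ coprime to $d$.

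The natural plan is to open the von Mangoldt function $\Lambda$ (equivalently, the indicator of the primes) using a combinatorial identity of Vaughan or Heath--Brown type, thereby reducing the average of $\abs{\pi(x;d,a)-\pi(x)/\varphi(d)}$ to a bounded number of bilinear sums. These split into \emph{Type I} sums, whose inner variable runs over a smooth, essentially unrestricted range, and \emph{Type II} sums, which are genuine bilinear forms $\sum_m\sum_n\alpha_m\beta_n$ with both variables of comparable size. The Type I contribution can be controlled up to level $x^{1/2}$ by the large sieve inequality, exactly as in the classical proof of Lemma \ref{bomvinthm}; to exceed level $1/2$ one would detect the congruence $mn\equiv a\pmod d$ via additive characters and be led to estimate incomplete Kloosterman sums on average, where the deep input would be spectral bounds for sums of Kloosterman sums in the spirit of Deshouillers--Iwaniec.

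The hard part---indeed the reason the conjecture remains open---is the Type II range. Here one needs nontrivial cancellation in bilinear forms with entirely general coefficients $\alpha_m,\beta_n$, uniformly over all moduli up to $x^{1-\varepsilon}$ and over the worst-case residue $a$. Current technology (the dispersion method, together with Weil and Deshouillers--Iwaniec bounds for Kloosterman sums) delivers such cancellation only for moduli with favourable factorisation, or for a \emph{fixed} residue class $a$ paired with a \emph{fixed} smooth coefficient sequence, as in the work of Bombieri--Friedlander--Iwaniec and later Zhang and Polymath. Demanding full uniformity in $a$ alongside arbitrary coefficients is beyond all present methods, and nothing is known that reaches level $1-\varepsilon$ in this generality. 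I would therefore not expect to prove the statement at all; the only honest move is to record it as a hypothesis and deduce the conditional bound \eqref{ehasym2} from it, which is precisely what the authors do.
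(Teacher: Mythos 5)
You are correct: the paper states this as a conjecture and offers no proof, invoking it only as a conditional hypothesis (contrasted with the unconditional Bombieri--Vinogradov theorem, Lemma \ref{bomvinthm}) to derive \eqref{ellhalbeq2} and the bounds \eqref{ehasym} and \eqref{ehasym2}. Your recognition that no proof exists and that the statement should simply be recorded as a hypothesis matches the paper's treatment exactly.
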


In particular, Conjecture \ref{ellhalbcon} is a modern version of the Elliott--Halberstam conjecture, after Friedlander and Granville \cite{friedlander1989limitations} showed that it was impossible to take $D=x/(\log x)^B$ in \eqref{ellhalbeq}.

Now, with a view to verify the remainder condition \eqref{r0con} in the DHR sieve, we require the following (separate) consequences of the Bombieri--Vinogradov theorem and Elliott--Halberstam conjecture.
\begin{lemma}\label{bomvinthm2}
    Let $h$ be a positive integer and $A>0$. Then 
    \begin{equation}\label{bomvineq2}
        \sum_{d\leq D}\mu^2(d)h^{\omega(d)}\max_{(a,d)=1}\left|\pi(x;d,a)-\frac{\pi(x)}{\varphi(d)}\right|\ll_{A}\frac{x}{(\log x)^A},
    \end{equation}
    where $D=x^{\frac{1}{2}}/(\log x)^B$, for some constant $B>0$ depending on $A$ and $h$. Assuming the Elliott--Halberstam conjecture, we also have, for any $\varepsilon>0$
    \begin{equation}\label{ellhalbeq2}
        \sum_{d\leq D}\mu^2(d)h^{\omega(d)}\max_{(a,d)=1}\left|\pi(x;d,a)-\frac{\pi(x)}{\varphi(d)}\right|\ll_{\varepsilon,h,A}\frac{x}{(\log x)^A},
    \end{equation}
    where $D=x^{1-\varepsilon}$.
\end{lemma}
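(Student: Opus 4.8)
The plan is to remove the weight $\mu^2(d)h^{\omega(d)}$ by a Cauchy--Schwarz argument, reducing both assertions to the unweighted statements already in hand: Lemma \ref{bomvinthm} for the unconditional bound \eqref{bomvineq2} and Conjecture \ref{ellhalbcon} for the conditional bound \eqref{ellhalbeq2}. Write $\Delta(d):=\max_{(a,d)=1}\left|\pi(x;d,a)-\pi(x)/\varphi(d)\right|$ for brevity. First I would split $\Delta(d)=\Delta(d)^{1/2}\cdot\Delta(d)^{1/2}$ and apply Cauchy--Schwarz to obtain
\[
\sum_{d\leq D}\mu^2(d)h^{\omega(d)}\Delta(d)\leq\Bigl(\sum_{d\leq D}\mu^2(d)h^{2\omega(d)}\Delta(d)\Bigr)^{1/2}\Bigl(\sum_{d\leq D}\Delta(d)\Bigr)^{1/2}.
\]
The second factor is exactly the unweighted Bombieri--Vinogradov sum, so by Lemma \ref{bomvinthm} it is $\ll_{A'}(x/(\log x)^{A'})^{1/2}$ for any $A'>0$, valid for $D=x^{1/2}/(\log x)^{B}$ with $B=B(A')$. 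Under the Elliott--Halberstam conjecture the same factor is $\ll_{\varepsilon,A'}(x/(\log x)^{A'})^{1/2}$ for $D=x^{1-\varepsilon}$, by Conjecture \ref{ellhalbcon}. This is the only place the level of distribution enters, so $D$ is preserved in both settings.

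For the first factor I would discard all cancellation and use the trivial bound $\Delta(d)\ll x/\varphi(d)$, which follows from $\pi(x;d,a)\leq x/d+1\ll x/\varphi(d)$ together with $\pi(x)/\varphi(d)\ll x/\varphi(d)$. This gives
\[
\sum_{d\leq D}\mu^2(d)h^{2\omega(d)}\Delta(d)\ll x\sum_{d\leq D}\frac{\mu^2(d)h^{2\omega(d)}}{\varphi(d)}.
\]
Since $\mu^2(d)h^{2\omega(d)}/\varphi(d)$ is multiplicative with value $h^2/(p-1)$ at each prime $p$, a standard Mertens-type estimate yields $\sum_{d\leq D}\mu^2(d)h^{2\omega(d)}/\varphi(d)\leq\prod_{p\leq D}\bigl(1+h^2/(p-1)\bigr)\ll_h(\log x)^{h^2}$, so the first factor is $\ll_h(x(\log x)^{h^2})^{1/2}$.

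Multiplying the two factors gives
\[
\sum_{d\leq D}\mu^2(d)h^{\omega(d)}\Delta(d)\ll_h\frac{x}{(\log x)^{(A'-h^2)/2}},
\]
and choosing $A'=2A+h^2$ delivers the desired bound $\ll_{A,h}x/(\log x)^A$ in \eqref{bomvineq2}; the constant $B$ inherits a dependence on $A$ and $h$ through $A'$, exactly as stated. The conditional bound \eqref{ellhalbeq2} is proven identically, with Conjecture \ref{ellhalbcon} replacing Lemma \ref{bomvinthm} in the second factor and the level raised to $D=x^{1-\varepsilon}$. I expect no genuine obstacle here: the only point requiring minor care is confirming that the weight removal does not degrade the level of distribution, which is automatic, since the trivial bound absorbs the inflated weight $h^{2\omega(d)}$ while the full strength of Bombieri--Vinogradov (or Elliott--Halberstam) is applied only to the unweighted sum. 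One could alternatively avoid Cauchy--Schwarz by a dyadic decomposition together with a Shiu-type mean-value bound for $h^{\omega(d)}$, but the Cauchy--Schwarz route is shorter and self-contained.
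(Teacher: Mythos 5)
Your proposal is correct and follows essentially the same route as the paper's proof: Cauchy--Schwarz to strip the weight $\mu^2(d)h^{\omega(d)}$, the unweighted Bombieri--Vinogradov (resp.\ Elliott--Halberstam) bound on one factor, and a Mertens-type product estimate $\sum_{d\leq D}\mu^2(d)h^{2\omega(d)}/\varphi(d)\ll_h(\log x)^{O_h(1)}$ on the other, followed by a rescaling of $A$. The only (harmless) difference is that where you use the trivial bound $\Delta(d)\ll x/\varphi(d)$, the paper invokes the Brun--Titchmarsh inequality to get $\Delta(d)\ll_{\varepsilon}x/(\varphi(d)\log x)$; both lose at most a fixed power of $\log x$, which the final choice of $A'$ absorbs.
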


In particular, if we restrict to square-free $d$, then the sums in \eqref{bomvineq} and \eqref{ellhalbeq} can be weighted by a factor of $h^{\omega(d)}$ with essentially no change to the final bound. The proof of Lemma \ref{bomvinthm2} is standard, but we include the details below for completeness.

\begin{proof}[Proof of Lemma \ref{bomvinthm2}]
    We only provide a proof of \eqref{ellhalbeq2} since the proof of \eqref{bomvineq2} is essentially identical. So, assume the Elliott--Halberstam conjecture. We write
    \begin{equation*}
        E(d)=\max_{(a,d)=1}\left|\pi(x;d,a)-\frac{\pi(x)}{\varphi(d)}\right|\quad\text{and}\quad f(d)=\mu^2(d)h^{\omega(d)}.
    \end{equation*}
    By the Cauchy--Schwarz inequality and Elliott--Halberstam conjecture,
    \begin{equation}\label{cseq}
        \sum_{d\leq D}f(d)E(d)\leq\sqrt{\sum_{d\leq D}f(d)^2E(d)}\sqrt{\sum_{d\leq D}E(d)}\ll_A \sqrt{\sum_{d\leq D}f(d)^2E(d)}\sqrt{\frac{x}{(\log x)^A}}
    \end{equation}
    for any $A>0$. Since $D=x^{1-\varepsilon}$, an application of the Brun--Titchmarsh inequality yields $E(d)\ll_{\varepsilon}x/(\varphi(d)\log x)$. Thus, \eqref{cseq} can be bounded further as
    \begin{equation}\label{bruntitcheq}
         \sum_{d\leq D}f(d)E(d)\ll_{A,\varepsilon}\frac{x}{(\log x)^{(A+1)/2}}\sqrt{\sum_{d\leq D}\frac{f(d)^2}{\varphi(d)}}.
    \end{equation}
    Here,
    \begin{equation*}
        \sum_{d\leq D}\frac{f(d)^2}{\varphi(d)}=\sum_{d\leq D}\frac{\mu^2(d)h^{2\omega(d)}}{\varphi(d)}\leq\prod_{p\leq D}\left(1+\frac{h^2}{p-1}\right)\leq\prod_{p\leq D}\left(1+\frac{2h^2}{p}\right).
    \end{equation*}
    To finish, we note that
    \begin{align}\label{logheq}
        \prod_{p\leq D}\left(1+\frac{2h^2}{p}\right)=\exp\left(\sum_{p\leq D}\log\left(1+\frac{2h^2}{p}\right)\right)&\ll\exp\left(\sum_{p\leq D}\frac{2h^2}{p}+O_h(1)\right)\notag\\
        &\ll_h(\log x)^{2h^2},
    \end{align}
    where in the last step we used Mertens' first theorem. Substituting \eqref{logheq} into \eqref{bruntitcheq} gives the desired result (after a potential rescaling of $A$).
\end{proof}
    
\section{Proof of Theorem \ref{mainthmtech}}\label{techsect}
In this section we prove Theorem \ref{mainthmtech}. The general approach will be to apply the weighted DHR sieve, after carefully proving all of the required conditions. Along the way, we also apply Proposition \ref{thueprop} so that we may necessitate that $\eta$, appearing in \eqref{secondNeq}, is $k$th power-free.

\subsection{Setting up the sieve}\label{setupsect}
Throughout we fix $k\geq 1$. Let $\mathcal{Q}_k=\{q_1,q_2,\ldots,q_{\ell_k}\}$ be the set all primes such that $q_i-1\mid k$. Without loss of generality, we may assume that
\begin{equation*}
    2=q_1<q_2<\cdots<q_{\ell_k}.
\end{equation*}
For any positive integer $N$, let $m_i$ be as defined in Theorem \ref{mainthmtech}. Then, for each $i\in\{1,\ldots,\ell_k\}$, let $n_i$ be the largest possible value (amongst all $N\geq 1$) of $m_i$. That is, $n_i$ is the largest integer such that
\begin{align}
    q_i^{n_i-1}(q_i-1)&\mid k,\quad\text{if $q_i\neq 2$ or $n_i\leq 2$},\label{nicon1}\\
    2^{n_i-2}&\mid k,\quad \text{if $q_i=2$, $k$ is even and $n_i\geq 3$.}\label{nicon2}
\end{align}
For any integer $N\geq 1$, we then set
\begin{align}
    Q&=\prod_{1\leq i\leq \ell_k}q_i^{m_i},\notag\\
    \mathcal{P}&=\{p\ \text{prime}:p\notin\mathcal{Q}_k,\:(p,N)=1\}\label{mainPdef}
\end{align}
and
\begin{equation}\label{mainAdef}
    \mathcal{A}=\left\{\frac{N-p^k}{Q}: 2\leq p<N^{1/k},\: p\not\in\mathcal{Q}_k,\:(p,N)=1,\: C_{N,p,k}\right\},
\end{equation}
where $C_{N,p,k}$ is the set of $\ell_k$ conditions
\begin{align}
    N-p^k\not\equiv 0\pmod{q_i^{n_i+1}},\quad\text{if $m_i=n_i$}.\label{cnpkeq}
\end{align}
Our aim is to sift the set $\mathcal{A}$ by $\mathcal{P}$, allowing us to find $a\in \mathcal{A}$ with a bounded number of prime factors. As discussed in Section \ref{kthressect}, each element of $\mathcal{A}$ is a positive integer. However, we also need to ensure that no element of $\mathcal{A}$ is divisible by a prime in $\overline{\mathcal{P}}$.

\begin{lemma}
    Let $\mathcal{A}$ and $\mathcal{P}$ be as defined in \eqref{mainAdef} and \eqref{mainPdef} above. Then for all $a\in\mathcal{A}$ and $\widetilde{p}\in\overline{\mathcal{P}}$, we have $\widetilde{p}\nmid a$.
\end{lemma}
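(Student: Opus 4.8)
The plan is to verify the claim separately for the two kinds of primes comprising $\overline{\mathcal{P}}$: those lying outside $\mathcal{Q}_k$, which by the definition \eqref{mainPdef} of $\mathcal{P}$ must divide $N$, and those lying in $\mathcal{Q}_k$. Throughout, fix $a=(N-p^k)/Q\in\mathcal{A}$, so that $2\le p<N^{1/k}$ with $p\notin\mathcal{Q}_k$, $(p,N)=1$, and the conditions $C_{N,p,k}$ of \eqref{cnpkeq} hold. The case $\widetilde{p}\notin\mathcal{Q}_k$ is immediate: here $\widetilde{p}\mid N$, and since $Q$ is a product of powers of primes in $\mathcal{Q}_k$ we have $\widetilde{p}\nmid Q$, so it suffices to show $\widetilde{p}\nmid N-p^k$. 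As $(p,N)=1$ forces $\widetilde{p}\neq p$, we have $\widetilde{p}\nmid p^k$ while $\widetilde{p}\mid N$, whence $\widetilde{p}\nmid N-p^k$ and a fortiori $\widetilde{p}\nmid a$.

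The substantive case is $\widetilde{p}=q_j\in\mathcal{Q}_k$. Since the exact power of $q_j$ dividing $Q$ is $q_j^{m_j}$, the assertion $q_j\nmid a$ is equivalent to $q_j^{m_j}$ exactly dividing $N-p^k$. For the lower bound I would invoke Proposition \ref{Qprop} (ultimately Lemma \ref{localoblem}): the pair $(q_j,m_j)$ satisfies \eqref{qeq1}--\eqref{qeq2} and $p\notin\mathcal{Q}_k$ gives $(p,q_j)=1$, so $p^k\equiv 1\pmod{q_j^{m_j}}$, while $N\equiv1\pmod{q_j^{m_j}}$ by the choice of $m_j$; subtracting yields $q_j^{m_j}\mid N-p^k$. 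It then remains to exclude divisibility by $q_j^{m_j+1}$, and here the argument splits according to whether $m_j<n_j$ or $m_j=n_j$. If $m_j<n_j$, then \eqref{qeq1}--\eqref{qeq2} persist at exponent $m_j+1$, so Proposition \ref{Qprop} again gives $p^k\equiv1\pmod{q_j^{m_j+1}}$; by the maximality in the definition of $m_j$, the only obstruction to enlarging $m_j$ must be $N\not\equiv1\pmod{q_j^{m_j+1}}$, whence $N-p^k\equiv N-1\not\equiv0\pmod{q_j^{m_j+1}}$. If instead $m_j=n_j$, the residue of $p^k$ modulo $q_j^{m_j+1}$ is no longer forced to be $1$ and this congruence argument fails; this is precisely the boundary case for which $C_{N,p,k}$ was imposed, and \eqref{cnpkeq} directly gives $N-p^k\not\equiv0\pmod{q_j^{n_j+1}}$. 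In either case $q_j^{m_j+1}\nmid N-p^k$, so $q_j^{m_j}$ exactly divides $N-p^k$ and $q_j\nmid a$.

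I expect the main obstacle to be the upper bound in the case $\widetilde{p}=q_j$, and in particular recognising that the argument must bifurcate at $m_j=n_j$: when $m_j<n_j$ maximality pushes the obstruction onto $N$, whereas at $m_j=n_j$ one loses control of $p^k$ modulo $q_j^{n_j+1}$ and must fall back on the tailored hypothesis $C_{N,p,k}$, which is the very reason that condition appears in the definition \eqref{mainAdef} of $\mathcal{A}$. The only remaining point needing care is to confirm that the conditions \eqref{qeq1}--\eqref{qeq2} are monotone in the exponent, so that they genuinely persist from $m_j$ to $m_j+1$ whenever $m_j<n_j$; this is read off directly from Lemma \ref{localoblem}.
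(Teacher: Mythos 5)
Your proof is correct and follows essentially the same route as the paper's: split $\overline{\mathcal{P}}$ into primes dividing $N$ (dispatched via $(p,N)=1$) and primes $q_i\in\mathcal{Q}_k$, then handle $m_i<n_i$ by the maximality of $m_i$ together with Proposition \ref{Qprop}, and $m_i=n_i$ by the condition \eqref{cnpkeq}. If anything, your write-up is more explicit than the paper's at the one delicate point, namely applying Proposition \ref{Qprop} at exponent $m_i+1$ (justified by the downward monotonicity of \eqref{qeq1}--\eqref{qeq2}) to rule out $q_i^{m_i+1}\mid N-p^k$ when $m_i<n_i$.
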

\begin{proof}
    To begin with, we note that
    \begin{equation*}
        \overline{\PP}=\{p\ \text{prime}:p\in\mathcal{Q}_k\ \text{or}\ p\mid N\}.
    \end{equation*}
    Now, let $a=(N-p^k)/Q$ be an element of $\mathcal{A}$, and $\widetilde{p}\in\overline{\mathcal{P}}$. First, suppose that $\widetilde{p}\mid N$. Then, since $(p,N)=1$ it follows that $(a,N)=1$ and therefore $\widetilde{p}\nmid a$.

    On the other hand suppose that $\widetilde{p}\in\mathcal{Q}_k$. That is, $\widetilde{p}=q_i$ for some $i\in\{1,\ldots,\ell_k\}$. We consider two cases: $m_i\neq n_i$ and $m_i=n_i$. If $m_i\neq n_i$ then $N\equiv 1\pmod{q_i^{m_i}}$ but $N\not\equiv 1\pmod{q_i^{m_i+1}}$. Additionally, $p^k\equiv 1\pmod{q_i^{m_i}}$ by Proposition \ref{Qprop}, so it follows that $\widetilde{p}=q_i\nmid a$ as required. Otherwise, if $m_i=n_i$ the result follows analogously by the condition \eqref{cnpkeq}.
\end{proof}

\subsection{The \eqref{omegacon} condition}
Having defined $\mathcal{A}$ and $\mathcal{P}$ for our problem, we now define a suitable multiplicative function $g(d)$ and prove the first condition \eqref{omegacon} for the weighted DHR sieve.

In particular, for any $d$ with $(d,\overline{\mathcal{P}})=1$, we let
\begin{equation}\label{rhokdef}
    \rho_k(d):=\prod_{p\mid d}\gcd(k, p-1)
\end{equation}
and set, for square-free $d>0$,
\begin{equation}\label{gdeqproof}
    g(d) = 
    \begin{cases}
        \rho_k(d)\frac{d}{\varphi(d)}, & \text{if $(d,\overline{\mathcal{P}})=1$ and $N$ is a $k$th power residue mod $d$},\\
        0,& \text{otherwise}.
    \end{cases}
\end{equation}

Now, $2\in\mathcal{Q}_k$ for all $k\geq 1$, so it follows that any $d$ with $(d,\overline{\mathcal{P}})=1$ is odd. Thus, by Lemma \ref{localoblem} and the Chinese remainder theorem, $\rho_k(d)$ counts the number of solutions to the congruence $x^k\equiv N\pmod{d}$. Note that a corresponding expression for $X$ and consequently $r(d)$ in \eqref{Adeqgen} will be given in the next subsection (see Proposition \ref{rdprop}).

With $g(d)$ defined as such, we now show that \eqref{omegacon} is satisfied for $\kappa=d(k)$ and an (unspecified) constant $L$ depending on $k$.  

\begin{proposition}\label{omegaprop}
    Let $k\geq 1$, $N\geq 1$, and $\mathcal{P}$ and $g(d)$ be as in \eqref{mainPdef} and \eqref{gdeqproof} respectively. Then,
    \begin{equation*}      
        \prod_{\substack{z_1\leq p<z_2\\p\in\mathcal{P}}}\left(1-\frac{ g(p)}{p}\right)^{-1}<\left(\frac{\log z_2}{\log z_1}\right)^{d(k)}\left\{1+O_k\left(\frac{1}{\log z_1}\right)\right\},\qquad z_2>z_1\geq 2,
    \end{equation*}
    where $d(k)$ is the number of divisors of $k$.
\end{proposition}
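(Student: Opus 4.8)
The plan is to take logarithms and reduce the claimed product bound to an upper estimate for $\sum_{z_1\le p<z_2,\,p\in\mathcal{P}} g(p)/p$. First I would simplify the summand: for a prime $p\in\mathcal{P}$ the definitions \eqref{rhokdef}--\eqref{gdeqproof} give $g(p)/p=\gcd(k,p-1)/(p-1)$ when $N$ is a $k$th power residue mod $p$ and $g(p)/p=0$ otherwise, so in all cases $g(p)/p\le\gcd(k,p-1)/(p-1)$. A crucial preliminary observation is that $p\in\mathcal{P}$ forces $p\notin\mathcal{Q}_k$, i.e.\ $p-1\nmid k$; hence $\gcd(k,p-1)$ is a \emph{proper} divisor of $p-1$, so $\gcd(k,p-1)\le(p-1)/2$ and therefore $0\le g(p)/p\le 1/2$. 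This uniform bound lets me expand $-\log(1-g(p)/p)=g(p)/p+O\bigl((g(p)/p)^2\bigr)$ with an absolute implied constant, and since $\gcd(k,p-1)\le k$ the square terms contribute $\sum_{p\ge z_1}O_k(1/(p-1)^2)=O_k(1/\log z_1)$, which is harmless.

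The heart of the argument is the estimate of $\sum_{z_1\le p<z_2,\,p\in\mathcal{P}} g(p)/p$. Here I would discard both the restriction $p\in\mathcal{P}$ and the $k$th power residue condition (all terms are nonnegative, so this only increases the sum) and use the elementary identity $\gcd(k,p-1)=\sum_{d\mid k,\ d\mid p-1}\varphi(d)$ to reorganize by the divisor $d\mid k$:
\[
\sum_{\substack{z_1\le p<z_2\\ p\in\mathcal{P}}}\frac{g(p)}{p}\ \le\ \sum_{d\mid k}\varphi(d)\sum_{\substack{z_1\le p<z_2\\ p\equiv 1\ (\mathrm{mod}\ d)}}\frac{1}{p-1}.
\]
To each inner sum I apply Mertens' theorem in arithmetic progressions (a consequence of the prime number theorem for progressions), namely $\sum_{p\le x,\ p\equiv 1\ (\mathrm{mod}\ d)}1/(p-1)=\varphi(d)^{-1}\log\log x+C_d+O_d(1/\log x)$, so that the inner sum equals $\varphi(d)^{-1}(\log\log z_2-\log\log z_1)+O_d(1/\log z_1)$. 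The factor $\varphi(d)$ then cancels, and since $k$ is fixed there are exactly $d(k)$ divisors $d\mid k$, each contributing $\log(\log z_2/\log z_1)$. This is precisely where the dimension $d(k)$ emerges, giving
\[
\sum_{\substack{z_1\le p<z_2\\ p\in\mathcal{P}}}\frac{g(p)}{p}\ \le\ d(k)\log\frac{\log z_2}{\log z_1}+O_k\!\left(\frac{1}{\log z_1}\right).
\]
Combining with the first paragraph, $\log\prod(1-g(p)/p)^{-1}\le d(k)\log(\log z_2/\log z_1)+O_k(1/\log z_1)$; exponentiating and using that the exponent $O_k(1/\log z_1)$ is bounded for $z_1\ge 2$, so that $e^{O_k(1/\log z_1)}=1+O_k(1/\log z_1)$, yields the stated inequality with $L=O_k(1)$.

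I expect the main obstacle to be keeping the error terms uniform in $z_1,z_2$ while depending only on $k$: one must invoke the prime number theorem for arithmetic progressions for the fixed finite set of moduli $d\mid k$ and verify that the constants $C_d$ cancel upon taking the difference over $[z_1,z_2)$, and that extending the sum to all primes (including the finitely many $p\in\mathcal{Q}_k$) costs only $O_k(1/\log z_1)$. It is worth emphasizing that this upper bound deliberately discards the $k$th power residue restriction and all cancellation in the associated character sums; consequently $d(k)$ is a clean, valid sieve dimension here, though likely not the smallest admissible one.
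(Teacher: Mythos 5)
Your proof is correct, and it extracts the dimension $d(k)$ by a genuinely different combinatorial route than the paper. The paper bounds $g(p)$ by an auxiliary function $\widetilde{g}(p)$, groups the primes by their residue class $a$ modulo $k$, applies Mertens' theorem in arithmetic progressions to the single modulus $k$ (Lemma \ref{mertenaplem}), and then extracts the factor $d(k)$ from Menon's identity $\sum_{(a,k)=1}\gcd(a-1,k)=\varphi(k)d(k)$ (Lemma \ref{menonlem}). You instead expand $\gcd(k,p-1)=\sum_{d\mid k,\ d\mid p-1}\varphi(d)$, swap the order of summation, and apply Mertens' theorem to the progressions $p\equiv 1\pmod{d}$ for each of the $d(k)$ divisors of $k$; the factors $\varphi(d)$ cancel and each divisor contributes one $\log\left(\log z_2/\log z_1\right)$. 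In effect you re-derive the averaged form of Menon's identity from the elementary identity $\sum_{d\mid n}\varphi(d)=n$, so your argument needs Mertens only in the residue class $1$ modulo divisors of $k$, whereas the paper needs it in all reduced classes modulo $k$ plus Menon's identity as a separate input. A second, smaller difference is your treatment of the quadratic error terms: you observe that $p\in\mathcal{P}$ forces $p-1\nmid k$, hence $\gcd(k,p-1)\le(p-1)/2$ and $g(p)/p\le 1/2$ uniformly, which legitimises the expansion of the logarithm for \emph{every} $p\in\mathcal{P}$; the paper instead works with $\widetilde{g}$, which satisfies $\widetilde{g}(p)\le p/2$ only for $p\ge 2k+1$, and so takes $z_1$ large and absorbs the small primes into constants. (One pedantic point: your cited Mertens estimate is for $\sum 1/(p-1)$ rather than $\sum 1/p$, but the two differ by the convergent sum $\sum 1/(p(p-1))$, whose tail is $O(1/z_1)$, so this is harmless.) Both routes give the same final estimate; yours is marginally more self-contained, while the paper's keeps all book-keeping at the single modulus $k$.
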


To prove Proposition \ref{omegaprop} we first state a couple of lemmas from the literature.
\begin{lemma}[Mertens' second theorem for primes in arithmetic progression]\label{mertenaplem}
    We have
    \begin{equation*}
        \sum_{\substack{p\leq x\\p\equiv a\ (\mathrm{mod}\ k)}}\frac{1}{p}=\frac{\log\log x}{\varphi(k)}+M_{k,a}+O_k\left(\frac{1}{\log x}\right),
    \end{equation*}
    where $M_{k,a}$ is constant depending only on $k$ and $a$.
\end{lemma}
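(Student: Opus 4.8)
The plan is to deduce this from the prime number theorem in arithmetic progressions by two successive partial summations. Throughout we may assume $\gcd(a,k)=1$, since otherwise the progression $p\equiv a\pmod{k}$ contains at most one prime and the sum is $O_k(1)$; the stated main term is present precisely in this coprime case. Since $k$ is fixed, all implied constants may depend on $k$, so it suffices to invoke the classical fixed-modulus prime number theorem for arithmetic progressions: for $\gcd(a,k)=1$,
\begin{equation*}
    \theta(t;k,a):=\sum_{\substack{p\le t\\ p\equiv a\,(\mathrm{mod}\ k)}}\log p=\frac{t}{\varphi(k)}+E(t),\qquad E(t)\ll_k \frac{t}{(\log t)^2},
\end{equation*}
which follows from the standard zero-free region for Dirichlet $L$-functions (Siegel--Walfisz being more than enough). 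Conceptually, the factor $1/\varphi(k)$ and this splitting are exactly what one gets from character orthogonality $\sum_{p\le t,\,p\equiv a}1=\frac{1}{\varphi(k)}\sum_{\chi}\bar\chi(a)\sum_{p\le t}\chi(p)$, the principal character supplying the main term and the nonprincipal characters contributing only lower-order terms.

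First I would convert $\theta$ into the logarithmically weighted sum $T(x):=\sum_{p\le x,\,p\equiv a}\frac{\log p}{p}$ by Abel summation, writing $T(x)=\frac{\theta(x;k,a)}{x}+\int_2^x\frac{\theta(t;k,a)}{t^2}\,\mathrm{d}t$ and substituting the expansion above. The main term produces $\frac{\log x}{\varphi(k)}$ plus a constant, while the error $\int_2^x E(t)/t^2\,\mathrm{d}t$ converges (since $E(t)/t^2\ll_k 1/(t(\log t)^2)$ is integrable), defining a constant $B_{k,a}$ whose tail is $O_k(1/\log x)$. This gives
\begin{equation*}
    T(x)=\frac{\log x}{\varphi(k)}+B_{k,a}+O_k\!\left(\frac{1}{\log x}\right).
\end{equation*}
A second Abel summation, via $\frac1p=\frac{\log p}{p}\cdot\frac{1}{\log p}$, yields $\sum_{p\le x,\,p\equiv a}\frac1p=\frac{T(x)}{\log x}+\int_2^x\frac{T(t)}{t(\log t)^2}\,\mathrm{d}t+O_k(1)$. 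Inserting the expansion of $T$, the integral of the main term $\frac{1}{\varphi(k)}\int_2^x\frac{\mathrm{d}t}{t\log t}$ produces the leading $\frac{\log\log x}{\varphi(k)}$, the boundary term gives $\frac{1}{\varphi(k)}+O_k(1/\log x)$, and every remaining bounded or convergent contribution (including those of $B_{k,a}$ and of the $O_k(1/\log t)$ remainder integrated against $\frac{\mathrm{d}t}{t(\log t)^2}$) collapses into a single constant $M_{k,a}$ depending only on $k$ and $a$, leaving a net error of $O_k(1/\log x)$.

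The main obstacle—and essentially the only analytic input—is securing the error $E(t)\ll_k t/(\log t)^2$ in the prime number theorem for the fixed modulus $k$, since this is what simultaneously guarantees that the intermediate constant $B_{k,a}$ (hence $M_{k,a}$) exists and that each error integral above is genuinely $O_k(1/\log x)$ rather than merely $o(1)$. The remaining bookkeeping is routine: weighting by $\log p$ directly avoids any prime-power terms, and should one prefer to begin from $\psi(t;k,a)$ instead, the prime-power correction is $O(\sqrt{t})$ and is absorbed harmlessly into the constant with exponentially small error.
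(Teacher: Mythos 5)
Your proposal is correct, but it takes a genuinely different route from the paper for the simple reason that the paper gives no argument at all: its ``proof'' of Lemma~\ref{mertenaplem} is a one-line citation to \cite{languasco2010computing}, where the result (together with numerically explicit values of the constants $M_{k,a}$) is established. Your derivation --- the fixed-modulus prime number theorem for arithmetic progressions with error $E(t)\ll_k t/(\log t)^2$, followed by two Abel summations, first to $T(x)=\sum_{p\le x,\,p\equiv a}\frac{\log p}{p}$ and then to $\sum_{p\le x,\,p\equiv a}\frac{1}{p}$ --- is the standard self-contained proof, and it correctly identifies the two points of substance: the only analytic input is a fixed-modulus PNT-type error term (Siegel--Walfisz is indeed more than enough), and it is precisely the $O_k(1/\log x)$ decay (rather than $o(1)$) that makes the constants $B_{k,a}$ and then $M_{k,a}$ exist as limits of convergent integrals with controlled tails. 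You also make explicit a hypothesis the paper leaves tacit: the stated asymptotic can only hold when $(a,k)=1$, since otherwise the progression contains at most one prime; this is harmless, as the paper only applies the lemma to residues coprime to $k$ in the proof of Proposition~\ref{omegaprop}. What the citation buys the paper is brevity and explicit constants; what your argument buys is self-containedness.

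One blemish worth fixing: in the second Abel summation you append a spurious ``$+O_k(1)$'' to the identity $\sum_{p\le x,\,p\equiv a}\frac{1}{p}=\frac{T(x)}{\log x}+\int_2^x\frac{T(t)}{t(\log t)^2}\,\mathrm{d}t$. This identity is exact, and the extra term should simply be deleted: a genuinely $x$-dependent bounded term could not ``collapse into a single constant $M_{k,a}$'' and would be fatal to the claimed $O_k(1/\log x)$ error. As written this is an internal inconsistency, but since the offending term does not actually exist, it is a typo-level fix rather than a gap.
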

\begin{proof}
    See, for example \cite{languasco2010computing}.
\end{proof}
\begin{lemma}[Menon's identity]\label{menonlem}
    For every integer $k\geq 1$
    \begin{equation*}
        \sum_{\substack{1\leq a\leq k\\(a,k)=1}}\gcd(a-1,k)=\varphi(k)d(k).
    \end{equation*}
\end{lemma}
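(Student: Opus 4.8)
The plan is to prove Menon's identity by exploiting the multiplicativity of the arithmetic function $k \mapsto \sum_{(a,k)=1} \gcd(a-1,k)$, where the sum ranges over $a$ in a reduced residue system mod $k$. First I would verify that this function is multiplicative: if $k = k_1 k_2$ with $(k_1,k_2)=1$, then by the Chinese Remainder Theorem the map $a \mapsto (a \bmod k_1, a \bmod k_2)$ is a bijection between reduced residues mod $k$ and pairs of reduced residues mod $k_1$ and mod $k_2$, and $\gcd(a-1,k) = \gcd(a-1,k_1)\gcd(a-1,k_2)$ factors accordingly. Since $\varphi$ and $d$ are both multiplicative, it then suffices to verify the identity on prime powers $k = p^m$.

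For the prime-power case, I would compute $\sum_{(a,p^m)=1} \gcd(a-1, p^m)$ directly by grouping the reduced residues $a$ according to the exact power $p^j$ dividing $a-1$. For each $j$ with $0 \le j \le m$, the value $\gcd(a-1,p^m)$ equals $p^{j}$ when $p^{j} \| (a-1)$ (with the convention that $j=m$ covers $p^m \mid (a-1)$), and one counts how many $a \in (\mathbb{Z}/p^m\mathbb{Z})^\times$ fall into each class. The residues with $p^j \mid (a-1)$ number $p^{m-j}$, and subtracting adjacent counts isolates those with exact valuation $j$. Summing $p^j$ against these counts yields a short geometric-type sum that I expect to collapse to $\varphi(p^m) d(p^m) = p^{m-1}(p-1)(m+1)$.

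An alternative I might present instead, or as a cross-check, is the slicker route via the formula $\gcd(n,k) = \sum_{e \mid \gcd(n,k)} \varphi(e) = \sum_{e \mid k,\ e \mid n} \varphi(e)$. Substituting $n = a-1$ and swapping the order of summation gives
\begin{equation*}
    \sum_{\substack{1\le a\le k\\(a,k)=1}} \gcd(a-1,k) = \sum_{e \mid k} \varphi(e) \cdot \#\{a : (a,k)=1,\ a \equiv 1 \pmod{e}\}.
\end{equation*}
The inner count is the number of units mod $k$ lying in a fixed residue class mod $e$; by standard character-sum or CRT arguments this equals $\varphi(k)/\varphi(e)$. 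Hence the whole sum becomes $\sum_{e \mid k} \varphi(e) \cdot \varphi(k)/\varphi(e) = \varphi(k)\sum_{e\mid k} 1 = \varphi(k) d(k)$, which is exactly the claim. This second approach has the advantage of producing the result in one line once the counting lemma for units in a progression is in hand.

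The main obstacle is establishing that $\#\{a \bmod k : (a,k)=1,\ a\equiv 1 \pmod e\} = \varphi(k)/\varphi(e)$ for $e \mid k$. This is where care is needed: one must check that the reduced residues distribute evenly across the residue classes mod $e$ that are themselves coprime to $e$, which follows cleanly from the CRT when $\gcd(e, k/e)=1$ but requires a slightly more attentive argument when $e$ and $k/e$ share prime factors. I would handle this by writing $k = \prod_p p^{v_p}$ and $e = \prod_p p^{w_p}$ with $w_p \le v_p$, reducing to prime powers where the count of units $\equiv 1 \pmod{p^{w}}$ among units mod $p^v$ is directly $p^{v-1}(p-1)/(p^{w-1}(p-1)) = p^{v-w} = \varphi(p^v)/\varphi(p^w)$ for $w \ge 1$, with the $w=0$ case giving the full $\varphi(p^v)$. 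Multiplying across primes then yields the claimed count, and the identity follows. The multiplicativity route of the first paragraph avoids this counting lemma entirely, so if the direct prime-power computation proves cleaner I would favor it as the primary argument.
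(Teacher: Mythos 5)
Your proposal is correct, but note that the paper does not actually prove this lemma: its ``proof'' is simply a pointer to Menon's original article and to T\'oth's survey of proofs, so any self-contained argument is by construction a different route from the paper's. Both of your arguments work. The second one (Gauss's formula $\gcd(n,k)=\sum_{e\mid \gcd(n,k)}\varphi(e)$ plus an interchange of summation) is essentially complete as written: the only nontrivial ingredient, namely that the units mod $k$ lying in the class $1 \bmod e$ number exactly $\varphi(k)/\varphi(e)$ for $e\mid k$, amounts to surjectivity of the reduction map $(\Z/k\Z)^\times \to (\Z/e\Z)^\times$, and your prime-by-prime count establishes it correctly, including the case where $e$ and $k/e$ share prime factors. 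In the first (multiplicativity) route there is one bookkeeping point to watch: among \emph{units} mod $p^m$, the count of $a$ with $p^j\mid a-1$ is $p^{m-j}$ only for $j\geq 1$; for $j=0$ it is $\varphi(p^m)$, not $p^m$, since the residues $a\equiv 0\pmod{p}$ must be excluded. With that correction the telescoping sum is $\bigl(\varphi(p^m)-p^{m-1}\bigr)\cdot 1+\sum_{j=1}^{m-1}\bigl(p^{m-j}-p^{m-j-1}\bigr)p^j+p^m=(m+1)\varphi(p^m)=\varphi(p^m)d(p^m)$, exactly as you predicted, so the slip is harmless. For context, the proof most often associated with this identity (and found in the cited survey) is the orbit-counting one: $(\Z/k\Z)^\times$ acts on $\Z/k\Z$ by multiplication, the orbits correspond to the divisors of $k$, and Burnside's lemma gives $d(k)=\frac{1}{\varphi(k)}\sum_{(a,k)=1}\gcd(a-1,k)$; your two arguments are more elementary in that they avoid group actions entirely, at the cost of the explicit counting above.
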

\begin{proof}
    See \cite{menon1965sum} or \cite{toth2021proofs}.
\end{proof}
We now prove Proposition \ref{omegaprop}.
\begin{proof}[Proof of Proposition \ref{omegaprop}]
    For any prime $p\geq 2$, we define
    \begin{equation}\label{gtildedef}
        \widetilde{g}(p):=
        \begin{cases}
            \gcd(k,p-1)\frac{p}{p-1},&\text{if $p-1\nmid k$},\\
            0,&\text{if $p-1\mid k$}.
        \end{cases}
    \end{equation}
    In particular, $g(p)\leq\widetilde{g}(p)$ for all $p\in\mathcal{P}$, and for $p-1>k$ we have $0\leq\widetilde{g}(p)<p$. As we are interested in an asymptotic result as $z_1\to\infty$, it thus suffices to bound, for sufficiently large $z_1$,
    \begin{equation*}
        \prod_{z_1\leq p<z_2}\left(1-\frac{\widetilde{g}(p)}{p}\right)^{-1}
    \end{equation*}
    which is independent of $N$. Since
    \begin{equation*}
        \frac{1}{p-1}=\frac{1}{p}+\frac{1}{p(p-1)},
    \end{equation*}
    it follows that for $x\geq k$
    \begin{align*}
        \sum_{p\leq x}\frac{\widetilde{g}(p)}{p}&= \sum_{\substack{p\leq x\\p-1\nmid k\\(p,k)=1}}\gcd(p-1,k)\left(\frac{1}{p}+\frac{1}{p(p-1)}\right)+\sum_{\substack{p\leq x\\p-1\nmid k\\p\mid k}}\frac{\gcd(p-1,k)}{p-1}\\
        &=\sum_{\substack{p\leq x\\(p,k)=1}}\frac{\gcd(p-1,k)}{p}+A_k
    \end{align*}
    for some constant $A_k$ depending only on $k$. Thus, by Lemmas \ref{mertenaplem} and \ref{menonlem},
    \begin{equation*}
        \sum_{p\leq x}\frac{\widetilde{g}(p)}{p}=d(k)\log\log x+B_k+O_k\left(\frac{1}{\log x}\right)
    \end{equation*}
    for some constant $B_k$ depending only on $k$. Therefore
    \begin{equation}\label{finalgpbound}
        \sum_{z_1\leq p< z_2}\frac{\widetilde{g}(p)}{p}=d(k)\log\frac{\log z_2}{\log z_1}+O_k\left(\frac{1}{\log z_1}\right).
    \end{equation}
    Next we note that $\widetilde{g}(p)\leq p/2$ for $p\geq 2k+1$. Thus, for $z_1\geq 2k+1$, 
    \begin{align}
        \prod_{z_1\leq p<z_2}\left(1-\frac{\widetilde{g}(p)}{p}\right)^{-1}&=\exp\left(-\sum_{z_1\leq p<z_2}\log\left(1-\frac{\widetilde{g}(p)}{p}\right)\right)\notag\\
        &<\exp\left(\sum_{z_1\leq p<z_2}\left(\frac{\widetilde{g}(p)}{p}+\frac{\widetilde{g}(p)^2}{p^2}\right)\right)\label{gpgp2eq}
    \end{align}
    since $\log(1-x)>-x-x^2$ for all $0<x\leq 1/2$. From the definition \eqref{gtildedef} of $\widetilde{g}(p)$, we have $\widetilde{g}(p)\leq 2k$ so that $\widetilde{g}(p)^2\leq 4k^2=O_k(1)$ as $z_1\to\infty$. Using this fact, along with \eqref{finalgpbound}, we can bound \eqref{gpgp2eq} above by
    \begin{align*}
        \exp\left(d(k)\log\frac{\log z_2}{\log z_1}+O_k\left(\frac{1}{\log z_1}\right)+4k^2\sum_{n\geq z_1}\frac{1}{n^2}\right)&=\left(\frac{\log z_2}{\log z_1}\right)^{d(k)}\exp\left(O_k\left(\frac{1}{\log z_1}\right)\right)\\
        &=\left(\frac{\log z_2}{\log z_1}\right)^{d(k)}\left\{1+O_k\left(\frac{1}{\log z_1}\right)\right\},
    \end{align*}
    as required.
\end{proof}

\subsection{The \ref{r0con} condition}
Next we prove the condition \ref{r0con} with $\tau=1/2$. To begin with, we require the following result which is an application of Lemma \ref{localoblem} and Proposition \ref{Qprop}.

\begin{lemma}\label{xk1blem}
    Let $k\geq 1$ and let $q_i$ and $n_i$ be as in Section \ref{setupsect} with $i\in\{1,\ldots,\ell_k\}$. Then, for every integer $x$ with $(x,q_i)=1$, there exists an integer $0\leq b<q_i$ with
    \begin{equation}\label{xkbeq}
        x^k\equiv 1+bq_i^{n_i}\pmod{q_i^{n_i+1}}.
    \end{equation}
    In addition, for each $0\leq b<q_i$, the congruence \eqref{xkbeq} has exactly $q_i^{n_i-1}(q_i-1)$ solutions for $x$ mod $q_i^{n_i+1}$ with $(x,q_i)=1$.
\end{lemma}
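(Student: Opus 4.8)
The plan is to interpret $x\mapsto x^k$ as a group homomorphism on the unit group $(\Z/q_i^{n_i+1}\Z)^*$ and to reduce both assertions of the lemma to a single count of its kernel, which is supplied by Lemma \ref{localoblem}.

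First I would dispose of the existence of $b$, which is the easy half. By the defining conditions \eqref{nicon1}--\eqref{nicon2} on $n_i$ together with Proposition \ref{Qprop}, every $x$ with $(x,q_i)=1$ satisfies $x^k\equiv 1\pmod{q_i^{n_i}}$. Hence $x^k-1=c\,q_i^{n_i}$ for some integer $c$, and reducing $c$ modulo $q_i$ produces a (unique) $0\leq b<q_i$ with $x^k\equiv 1+bq_i^{n_i}\pmod{q_i^{n_i+1}}$. Phrased group-theoretically, the image of $\phi\colon(\Z/q_i^{n_i+1}\Z)^*\to(\Z/q_i^{n_i+1}\Z)^*$, $\phi(x)=x^k$, lies inside the subgroup $H=\{y:y\equiv 1\pmod{q_i^{n_i}}\}$, whose elements are $1+bq_i^{n_i}$ for $0\leq b<q_i$; one checks directly that $H$ is a subgroup of order exactly $q_i$.

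For the counting assertion the key point is that the nonempty fibers of the homomorphism $\phi$ are precisely the cosets of $\ker\phi$, so they share a common size $|\ker\phi|$, while the number of attained values is $|\operatorname{im}\phi|=\varphi(q_i^{n_i+1})/|\ker\phi|$. Since the solutions $x$ of \eqref{xkbeq} for a fixed $b$ form exactly the fiber $\phi^{-1}(1+bq_i^{n_i})$, it suffices to prove that $|\ker\phi|=q_i^{n_i-1}(q_i-1)$. Granting this, $|\operatorname{im}\phi|=q_i^{n_i}(q_i-1)/\bigl(q_i^{n_i-1}(q_i-1)\bigr)=q_i=|H|$, and because $\operatorname{im}\phi\subseteq H$ this forces $\operatorname{im}\phi=H$; thus each of the $q_i$ values $1+bq_i^{n_i}$ is attained, every fiber is nonempty, and each has size exactly $q_i^{n_i-1}(q_i-1)$.

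It therefore remains to compute $|\ker\phi|$, the number of solutions of $x^k\equiv 1\pmod{q_i^{n_i+1}}$, and this is where Lemma \ref{localoblem} enters and where the bookkeeping is most delicate. For odd $q_i$ the lemma gives $|\ker\phi|=\gcd(k,\varphi(q_i^{n_i+1}))=\gcd(k,q_i^{n_i}(q_i-1))$; the maximality of $n_i$ in \eqref{nicon1} translates into the statements $v_{q_i}(k)=n_i-1$ (the $q_i$-adic valuation) and $(q_i-1)\mid k$, from which $\gcd(k,q_i^{n_i}(q_i-1))=q_i^{n_i-1}(q_i-1)$ follows by comparing valuations prime by prime. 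For $q_i=2$ I would instead invoke the $q=2$ branch of Lemma \ref{localoblem} applied to $2^{n_i+1}$: if $k$ is odd then \eqref{nicon1} gives $n_i=1$ and $|\ker\phi|=\gcd(k,2)=1=2^{n_i-1}$, while if $k$ is even then \eqref{nicon2} gives $n_i=v_2(k)+2\geq 3$ and the formula $2\gcd(k,2^{n_i-1})=2\cdot 2^{v_2(k)}=2^{n_i-1}$ again matches $q_i^{n_i-1}(q_i-1)=2^{n_i-1}$. The main obstacle is thus purely arithmetical, namely correctly pinning down $v_{q_i}(k)$ in terms of $n_i$ across the three cases of Lemma \ref{localoblem}; once this is done, the homomorphism argument closes both parts of the lemma uniformly.
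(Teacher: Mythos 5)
Your proof is correct and follows essentially the same route as the paper: both rest on Proposition \ref{Qprop} for the existence of $b$, then on Lemma \ref{localoblem} together with the counting identity $\varphi(q_i^{n_i+1})=q_i\cdot q_i^{n_i-1}(q_i-1)$ (and the same case analysis for odd $q_i$, and $q_i=2$ with $k$ odd or even) to show all $q_i$ values of $b$ are attained with equal multiplicity. Your kernel/coset framing of $x\mapsto x^k$ is just a group-theoretic dressing of the paper's ``generalised pigeonhole'' step, so the two arguments are the same in substance.
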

\begin{proof}
    For the first part of the lemma, we note that by the definition of $n_i$ and Proposition \ref{Qprop},
    \begin{equation*}
        x^k\equiv 1\pmod{q_i^{n_i}}
    \end{equation*}
    for all integers $x$ with $(x,q_i)=1$. Hence, \eqref{xkbeq} holds for some $b$ with $0\leq b<q_i$. 
    
    For the second part of the lemma, we split into the case where $q_i$ is odd, and the case where $q_i=2$. If $q_i$ is odd, and $1+bq_i^{n_i}$ is a $k$th power residue, then by Lemma \ref{localoblem} the congruence \eqref{xkbeq} has
    \begin{equation*}
        \gcd(k,\varphi(q_i^{n_i+1}))=\gcd(k,q_i^{n_i}(q_i-1))=q_i^{n_i-1}(q_i-1)
    \end{equation*}
    solutions with $(x,q_i)=1$, noting that
    \begin{equation*}
        q_i^{n_i-1}(q_i-1)\mid k\quad\text{but}\quad q_i^{n_i}(q_i-1)\nmid k
    \end{equation*}
    by \eqref{nicon1}. However, by the generalised pigeonhole principle this must hold for all $q_i$ choices of $b$ (meaning $1+bq_i^{n_i}$ is always a $k$th power residue) since there are
    \begin{equation*}
        \varphi(q_i^{n_i+1})=q_i\cdot q_i^{n_i-1}(q_i-1)
    \end{equation*}
    possible values of $x$ coprime to $q_i^{n_i+1}$. The case $q_i=2$ follows analogously, using \eqref{nicon1} if $k$ is odd (and $n_i=1$), or \eqref{nicon2} if $k$ is even (and $n_i\geq 3$).
\end{proof}

We now prove \eqref{r0con} for our sifting problem. This essentially boils down to a careful application of the Bombieri--Vinogradov theorem.

\begin{proposition}\label{rdprop}
    Let $k\geq 1$, $N\geq 1$, and $q_i$, $m_i$, $n_i$, $Q$, $\mathcal{P}$ and $\mathcal{A}$ be as in Section~\ref{setupsect}. Let
    \begin{equation*}
        \epsilon_i(N)=
        \begin{cases}
            1, & \text{if}\ m_i = n_i,\\
            0,& \text{if}\ m_i<n_i
        \end{cases}
    \end{equation*}
    and set
    \begin{equation}\label{mainXeq}
        X=\frac{\alpha_k(N)\gamma_k(N)\pi(N^{1/k})}{\varphi\left(Q\cdot\prod_i q_i^{\epsilon_i(N)}\right)}
    \end{equation}
    where
    \begin{equation*}
        \alpha_k(N)=\prod_{\substack{1\le i \le \ell_k\\\epsilon_i(N)=1}}q_i^{n_i-1}(q_i-1)^2,
    \end{equation*}
    and 
    \begin{equation*}
        \gamma_k(N)=
        \begin{cases}
            \prod_{i}\gcd\left(k, \varphi\left(q_i^{m_i(1-\epsilon_i(N))}\right)\right), &\text{if}\ m_1(1-\epsilon_1(N))\le 2,\\
            2\cdot \gcd(k, 2^{m_1-2})\left(\prod_{i>1}\gcd\left(k, \varphi\left(q_i^{m_i(1-\epsilon_i(N))}\right)\right)\right), &\text{if}\ m_1(1-\epsilon_1(N))\ge 3.
        \end{cases}
    \end{equation*}
    With $g(d)$ as in \eqref{gdeqproof} and $r(d)=|\mathcal{A}_d|-\frac{g(d)}{d}X$, we then have, for any $A>1$,
    \begin{equation}\label{rdproofeq}
        \sum_{\substack{d<X^{1/2}/(\log X)^{B}\\ (d,\overline{\mathcal{P}})=1}}\mu^2(d)4^{\omega(d)}|r(d)|\ll_{k,A}\frac{X}{(\log X)^{A}},
    \end{equation}
    where $B>0$ depends on $A$.
\end{proposition}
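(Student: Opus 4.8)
The plan is to evaluate $|\mathcal{A}_d|$ by sorting the admissible primes into arithmetic progressions, recognise the main term as $\tfrac{g(d)}{d}X$, and then control the remainder by a weighted Bombieri--Vinogradov estimate with level of distribution $\tau=1/2$. Throughout I write $x=N^{1/k}$ and set $D_1=\prod_{i:\,\epsilon_i(N)=1}q_i^{n_i+1}$, the modulus carrying the only nonvacuous local conditions in \eqref{mainAdef}. Indeed, since $m_i\le n_i$ for all $i$, Proposition \ref{Qprop} gives $p^k\equiv 1\equiv N\pmod{q_i^{m_i}}$ for every $p\notin\mathcal{Q}_k$, so the congruences hidden in the factor $Q$ hold automatically; the conditions \eqref{cnpkeq} for the indices with $m_i=n_i$ are the only ones that genuinely restrict $p$, and they live modulo $D_1$.

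First I would count, for squarefree $d$ with $(d,\overline{\mathcal{P}})=1$ (so $d$ is coprime to $D_1N$), the primes $p<x$ with $d\mid N-p^k$. By the Chinese remainder theorem $|\mathcal{A}_d|$ is the number of primes $p<x$ lying in the admissible reduced residue classes modulo $D_1d$, up to the $O(\omega(N))+O_k(1)=o(\log N)$ primes dividing $N$ or lying in $\mathcal{Q}_k$ that must be discarded. Modulo $d$ there are $\rho_k(d)$ classes solving $x^k\equiv N$ (and none when $N$ is a nonresidue, matching $g(d)=0$), while modulo $D_1$ Lemma \ref{xk1blem} shows that for each index with $m_i=n_i$ exactly $q_i^{n_i-1}(q_i-1)^2$ of the $\varphi(q_i^{n_i+1})$ reduced classes survive \eqref{cnpkeq}; multiplying gives $\alpha_k(N)$ admissible classes modulo $D_1$. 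Hence
\[
    |\mathcal{A}_d|=\sum_{t}\sum_{s}\pi\big(x;D_1d,\,\mathrm{CRT}(t,s)\big)+O(\omega(N)),
\]
the sums ranging over the $\alpha_k(N)$ classes $t\bmod D_1$ and the $\rho_k(d)$ classes $s\bmod d$.

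Next I would match the main term with \eqref{mainXeq}. Replacing each $\pi(x;D_1d,\cdot)$ by $\pi(x)/\varphi(D_1d)=\pi(x)/(\varphi(D_1)\varphi(d))$ yields $\alpha_k(N)\rho_k(d)\pi(x)/(\varphi(D_1)\varphi(d))$. Since $m_i\le n_i$, the conditions \eqref{nicon1}--\eqref{nicon2} (via Lemma \ref{localoblem}) force $x^k\equiv 1\pmod{q_i^{m_i}}$ to have its full complement of $\varphi(q_i^{m_i})$ reduced solutions, so $\gamma_k(N)=\prod_{i:\,\epsilon_i(N)=0}\varphi(q_i^{m_i})=\varphi(D_0)/\varphi(D_1)$ with $D_0=Q\prod_i q_i^{\epsilon_i(N)}$. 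Consequently the main term is exactly $\tfrac{\rho_k(d)}{\varphi(d)}X=\tfrac{g(d)}{d}X$ by \eqref{gdeqproof} and \eqref{mainXeq}, giving
\[
    |r(d)|\le \alpha_k(N)\,\rho_k(d)\,E(D_1d)+O(\omega(N)),\qquad E(m):=\max_{(a,m)=1}\Big|\pi(x;m,a)-\tfrac{\pi(x)}{\varphi(m)}\Big|.
\]
With $\rho_k(d)\le k^{\omega(d)}$ and $\alpha_k(N)=O_k(1)$, the sum in \eqref{rdproofeq} is $\ll_k\sum_{d<Y}\mu^2(d)(4k)^{\omega(d)}E(D_1d)$ plus a contribution $\ll\omega(N)\sum_{d<Y}\mu^2(d)4^{\omega(d)}\ll X^{1/2}(\log X)^{O(1)}$, which is negligible.

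It then remains to bound $\sum_{d<Y}\mu^2(d)(4k)^{\omega(d)}E(D_1d)$ with $Y=X^{1/2}/(\log X)^{B}$. Because $X\asymp_k x/\log x$, one has $D_1Y\ll_k x^{1/2}/(\log x)^{B'}$, so all moduli $D_1d$ lie in the Bombieri--Vinogradov range. I expect the only genuine obstacle here: $D_1$ is not squarefree, so Lemma \ref{bomvinthm2} does not apply verbatim; its \emph{proof}, however, does, since $D_1=O_k(1)$ is fixed. Concretely, Cauchy--Schwarz gives $\sum_d\mu^2(d)(4k)^{\omega(d)}E(D_1d)\le\big(\sum_d\mu^2(d)(4k)^{2\omega(d)}E(D_1d)\big)^{1/2}\big(\sum_dE(D_1d)\big)^{1/2}$, where the second factor is controlled by the unweighted theorem (Lemma \ref{bomvinthm}) applied to \emph{all} moduli up to $D_1Y$, while in the first factor Brun--Titchmarsh yields $E(D_1d)\ll_k x/(\varphi(d)\log x)$ and Mertens' theorem bounds $\sum_d\mu^2(d)(4k)^{2\omega(d)}/\varphi(d)\ll_k(\log x)^{O_k(1)}$, exactly as in the proof of Lemma \ref{bomvinthm2}. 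Taking $A$ sufficiently large gives $\ll_{k,A}x/(\log x)^{A}\ll_{k,A}X/(\log X)^{A}$ after renaming $A$, which is \eqref{rdproofeq}. Isolating this fixed nonsquarefree factor $D_1$ and verifying that the weighted Bombieri--Vinogradov argument survives with constants depending only on $k$ is the heart of the matter.
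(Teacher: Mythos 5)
Your proposal is correct and follows essentially the same route as the paper's own proof: express $|\mathcal{A}_d|$ as a count of primes in arithmetic progressions via the Chinese remainder theorem together with the local solution counts from Lemma \ref{localoblem}, Proposition \ref{Qprop} and Lemma \ref{xk1blem}, identify the main term with $\frac{g(d)}{d}X$, and control the remainder sum by the weighted Bombieri--Vinogradov machinery of Lemma \ref{bomvinthm2}. Your two deviations are refinements rather than a different method: you sieve modulo $D_1d$ with $D_1=\prod_{i:\,\epsilon_i(N)=1}q_i^{n_i+1}$ instead of the paper's $Qd\prod_i q_i^{\epsilon_i(N)}$ (equivalent, since $\gamma_k(N)=\varphi\bigl(Q\prod_i q_i^{\epsilon_i(N)}\bigr)/\varphi(D_1)$ exactly accounts for the congruences that hold automatically), and you explicitly rerun the Cauchy--Schwarz/Brun--Titchmarsh argument of Lemma \ref{bomvinthm2} to accommodate the fixed non-squarefree factor multiplying $d$ --- a point the paper silently glosses over when it asserts that \eqref{rdproofeq} ``follows by applying'' that lemma.
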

\begin{proof}
    To begin with, since the number of primes dividing $N$ is $\ll \log N$, we have    \begin{equation}\label{Admaindef}
        |\mathcal{A}_d|=\left|\left\{2\leq p<N^{1/k}: p^k\equiv N\ (\text{mod}\ Qd),\: C_{N,p,k}\right\}\right|+O(\log N),
    \end{equation}
    for any square-free $d$ with $(d,\overline{\mathcal{P}})=1$.
    The congruence classes of primes belonging to the set on the right-hand side of \eqref{Admaindef} correspond to the solutions of the system
    \begin{align*}
        &N-x^k\not\equiv 0\pmod{q_i^{n_i+1}},\quad\text{if}\ m_i=n_i,\\
        &N-x^k\equiv 0 \pmod{Qd}
    \end{align*}
    with $(x,q_1\cdots q_{\ell_k})=1$. In order to apply the Chinese remainder theorem, we recast this system as
    \begin{align}
        N-x^k&\not\equiv 0\pmod{q_i^{n_i+1}},\quad\text{if}\ m_i=n_i,\label{kcong1}\\
        N-x^k&\equiv 0 \quad \big(\mathrm{mod}\ Q/\prod_{i}q_i^{m_i\epsilon_i(N)}\big),\label{kcong2}\\
        N-x^k&\equiv 0\pmod{d}\label{kcong3}.
    \end{align}
    For each $i\in\{1,\ldots,\ell_k\}$ with $m_i=n_i$, we have $N\equiv 1\pmod{q_i^{n_i}}$ so that the number of solutions to \eqref{kcong1} is $\alpha_k(N)$ by Lemma \ref{xk1blem}. Then, by Lemma \ref{localoblem}, the number of solutions to \eqref{kcong2} is $\gamma_k(N)$, and the number of solutions to \eqref{kcong3} is $\rho_k(d)$ (as defined in \eqref{rhokdef}). Thus, by the Chinese remainder theorem, the system \eqref{kcong1}--\eqref{kcong3} has $\alpha_k(N)\gamma_k(N)\rho_k(d)$ solutions mod $Qd\cdot\prod_i q_i^{\epsilon_i(N)}$. 

    We now let $\delta_1,\delta_2,\ldots,\delta_{\alpha_k(N)\gamma_k(N)\rho_k(d)}$ denote all the solutions to \eqref{kcong1}--\eqref{kcong3} so that \eqref{Admaindef} can be rewritten as
    \begin{align*}
        |A_d|=\sum_{j=1}^{ \alpha_{k}(N)\gamma_k(N)\rho_k(d)}&\left|\left\{2\leq p<N^{1/k}:p\equiv \delta_j\ \big(\mathrm{mod}\ Qd\cdot \prod_i q_i^{\epsilon_i(N)}\big)\right\}\right|+O(\log N).
    \end{align*}
    Since
    \begin{equation*}
        \frac{g(d)}{d}X=\alpha_k(N)\gamma_k(N)\rho_k(d)\frac{\pi(N^{1/k})}{\varphi\left(Qd\cdot\prod_i q_i^{\epsilon_i(N)}\right)}
    \end{equation*}
    for any $d$ with $(d,\overline{\mathcal{P}})=1$, we then have, for $D\geq 1$,
    \begin{align}
        &\sum_{\substack{d<D\\ (d,\overline{\mathcal{P}})=1}}\mu^2(d)4^{\omega(d)}|r(d)|\notag\\
        &=\sum_{\substack{d<D\\ (d,\overline{\mathcal{P}})=1}}\mu^2(d)4^{\omega(d)}\left|\sum_{j=1}^{ \alpha_{k}(N)\gamma_k(N)\rho_k(d)}\pi(N^{1/k};Qd\prod_{i}q_i^{\epsilon_i(N)},\delta_j)-\frac{g(d)}{d}X\right|+O(\log N)\notag\\
        &\ll_k\sum_{\substack{d<D\\ (d,\overline{\mathcal{P}})=1}}\mu^2(d)\cdot (4k)^{\omega(d)}\left|\pi(N^{1/k};Qd\prod_{i}q_i^{\epsilon_i(N)},\delta_j)-\frac{\pi(N^{1/k})}{\varphi\left(Qd\cdot\prod_i q_i^{\epsilon_i(N)}\right)}\right|+\log N,\label{rdfinaleq}
    \end{align}
    where we have used that $\alpha_k(N)\gamma_k(N)=O_k(1)$ and $\rho_k(d)\leq k^{\omega(d)}$. The desired result \eqref{rdproofeq} then follows by applying the Bombieri--Vinogradov theorem (or more precisely, Lemma~\ref{bomvinthm2}) to \eqref{rdfinaleq}.
\end{proof}

\subsection{The \eqref{q0con} and \eqref{m0con} conditions}
We now prove that the conditions \eqref{q0con} and \eqref{m0con} are satisfied for our sifting problem. This is rather straightforward compared to our verifications of \eqref{omegacon} and \eqref{r0con} in the preceding sections.

\begin{lemma}\label{q0lem}
    Let $\mathcal{P}$ and $\mathcal{A}$ be as defined in \eqref{mainPdef} and \eqref{mainAdef}, and $X$ be as in \eqref{mainXeq}. Then, the condition \eqref{q0con} holds.
\end{lemma}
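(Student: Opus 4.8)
The plan is to bound $|\mathcal{A}_{q^2}|$ pointwise by a count of residue classes modulo $q^2$ and then sum over the primes $q\in[z,y)$. First I would note that any $q\in\mathcal{P}$ is coprime to $N$ and, since $2\in\mathcal{Q}_k$ and $q\notin\mathcal{Q}_k$, is an odd prime coprime to $Q$. Consequently $q^2\mid (N-p^k)/Q$ is equivalent to $q^2\mid N-p^k$, so dropping the primality of $p$ and the auxiliary conditions $C_{N,p,k}$, $(p,N)=1$, $p\notin\mathcal{Q}_k$ (which only shrink the set) gives
\begin{equation*}
    |\mathcal{A}_{q^2}|\le\#\{p<N^{1/k}:p^k\equiv N\pmod{q^2}\}.
\end{equation*}
By Lemma \ref{localoblem} applied with the odd prime $q$ and $m=2$, the congruence $x^k\equiv N\pmod{q^2}$ has either $0$ or $\gcd(k,\varphi(q^2))=\gcd(k,q(q-1))$ solution classes, each of which contains at most $N^{1/k}/q^2+1$ integers below $N^{1/k}$. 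This yields the clean pointwise bound $|\mathcal{A}_{q^2}|\le\gcd(k,q(q-1))\left(N^{1/k}/q^2+1\right)$.

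Next I would sum this over $z\le q<y$, splitting according to whether $q\mid k$. For the all-but-finitely-many primes $q\nmid k$ one has $\gcd(k,q(q-1))=\gcd(k,q-1)\le k$, so their contribution is
\begin{equation*}
    \le k\sum_{z\le q<y}\left(\frac{N^{1/k}}{q^2}+1\right)\ll_k N^{1/k}\sum_{n\ge z}\frac{1}{n^2}+k\,\pi(y)\ll_k\frac{N^{1/k}}{z}+y,
\end{equation*}
using $\sum_{n\ge z}n^{-2}\ll 1/z$ and $\pi(y)\le y$. The finitely many primes $q\mid k$ are fixed and satisfy $\gcd(k,q(q-1))\le kq=O_k(1)$, so each contributes $O_k(N^{1/k}/q+kq)$; since such $q$ appearing in the range satisfy $q\ge z$, this is absorbed into $\ll_k N^{1/k}/z+y$ as well.

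Finally I would recast $N^{1/k}$ in the form demanded by \eqref{q0con}. Because $\alpha_k(N)$, $\gamma_k(N)$, and $\varphi\!\left(Q\prod_i q_i^{\epsilon_i(N)}\right)$ are all bounded above and below in terms of $k$ alone, the definition \eqref{mainXeq} gives $X\asymp_k\pi(N^{1/k})\asymp_k N^{1/k}/\log N$; taking logarithms shows $\log X\asymp_k\log N$, whence $N^{1/k}\ll_k X\log X$. Substituting this into the preceding estimate produces $\sum_{z\le q<y,\,q\in\mathcal{P}}|\mathcal{A}_{q^2}|\ll_k (X\log X)/z+y$, which is exactly \eqref{q0con}. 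The argument is essentially elementary, matching the paper's remark that this condition is more straightforward than \eqref{omegacon} and \eqref{r0con}; the only mild bookkeeping is isolating the finitely many primes dividing $k$, where the $k$th power residue count can acquire an extra factor of $q$, and correctly identifying $N^{1/k}$ with $X\log X$ up to constants depending on $k$. Neither step is a genuine obstacle.
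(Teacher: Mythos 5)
Your proposal is correct and follows essentially the same route as the paper: bound $|\mathcal{A}_{q^2}|$ by the number of $p<N^{1/k}$ with $p^k\equiv N\pmod{q^2}$, use Lemma \ref{localoblem} to bound the number of residue classes by $O_k(1)$, sum the resulting $\ll_k N^{1/k}/q^2+1$ terms over $z\leq q<y$, and convert $N^{1/k}\ll_k X\log X$ via the prime number theorem. Your extra bookkeeping (coprimality of $q$ with $Q$, and the split over $q\mid k$ versus $q\nmid k$ in place of the paper's condition $q(q-1)>k$) is harmless and, if anything, slightly more careful than the published argument.
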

\begin{proof}
    For any prime $q\in\PP$, we have
    \begin{equation*}
        |\mathcal{A}_{q^2}|\leq \left|\left\{2\leq p\leq N^{1/k}:p^k\equiv N\ (\text{mod}\ q^2)\right\}\right|.
    \end{equation*}
    Provided $q(q-1)>k$, Lemma \ref{localoblem} gives that the congruence $x^k\equiv N$ (mod $q^2$) has at most $k$ solutions mod $q^2$. Hence, for such $q$,
    \begin{equation*}
        |\mathcal{A}_{q^2}|\leq k\left(\frac{N^{1/k}}{q^2}+1\right).
    \end{equation*}
    Thus,
    \begin{equation*}
        \sum_{\substack{z\leq q<y\\q\in\mathcal{P}}}|\mathcal{A}_{q^2}|\ll_k\frac{N^{1/k}}{z}+y\ll_k\frac{X\log X}{z}+y,
    \end{equation*}
    as desired. Here, $X\asymp_k N^{1/k}/\log N$ by the prime number theorem.
\end{proof}

\begin{lemma}\label{m0lem}
    Let $\mathcal{A}$ and $X$ be as defined in \eqref{mainAdef} and \eqref{mainXeq} respectively. Then, for any $\varepsilon>0$ and $N\geq N_{\varepsilon}$ sufficiently large, the condition \eqref{m0con} holds for $\tau=1/2$ and $\mu_0=2k+\varepsilon$.
\end{lemma}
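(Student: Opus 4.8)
The plan is to verify the defining inequality of $(M_0(\tau,\mu_0))$ directly: with $\tau=1/2$ and $\mu_0=2k+\varepsilon$ we must show that $\max_{a\in\mathcal{A}}|a|\leq X^{k+\varepsilon/2}$ once $N$ is large. Since every element of $\mathcal{A}$ has the shape $a=(N-p^k)/Q$ with $2\leq p<N^{1/k}$ and $Q\geq 1$, each $a$ is a positive integer satisfying $a\leq (N-2^k)/Q\leq N$. It therefore suffices to prove $N\leq X^{k+\varepsilon/2}$ for all $N\geq N_\varepsilon$, and after taking logarithms this is equivalent to
\[
    \log N\leq \Bigl(k+\tfrac{\varepsilon}{2}\Bigr)\log X.
\]

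First I would recall from the proof of Lemma \ref{q0lem} that $X\asymp_k N^{1/k}/\log N$, which follows from the definition \eqref{mainXeq} together with the prime number theorem, since the factors $\alpha_k(N)$, $\gamma_k(N)$ and $\varphi\bigl(Q\cdot\prod_i q_i^{\epsilon_i(N)}\bigr)$ are all bounded in terms of $k$ alone. Taking logarithms then gives
\[
    \log X=\frac{1}{k}\log N-\log\log N+O_k(1).
\]
Substituting this into the right-hand side of the target inequality yields
\[
    \Bigl(k+\tfrac{\varepsilon}{2}\Bigr)\log X=\log N+\frac{\varepsilon}{2k}\log N-\Bigl(k+\tfrac{\varepsilon}{2}\Bigr)\log\log N+O_k(1),
\]
so the desired bound reduces to
\[
    \frac{\varepsilon}{2k}\log N\geq \Bigl(k+\tfrac{\varepsilon}{2}\Bigr)\log\log N+O_k(1),
\]
which holds for every $N\geq N_\varepsilon$ because the left-hand side grows like a positive constant multiple of $\log N$, while the right-hand side is only $O_k(\log\log N)$.

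There is no genuine obstacle here; the one point requiring care is that the small exponent gain $\varepsilon/2$ is exactly what absorbs the $(\log N)^{k}$ discrepancy between $N$ and $X^{k}$ (together with the bounded multiplicative constants). This also explains why one cannot take $\mu_0=2k$ outright: with $\mu_0=2k$ the inequality would read $\log N\leq \log N-k\log\log N+O_k(1)$, which fails for all large $N$. Hence the $\varepsilon$ is necessary, and choosing $N$ sufficiently large in terms of $\varepsilon$ completes the verification.
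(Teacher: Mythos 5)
Your proof is correct and takes essentially the same approach as the paper's: the paper's one-line argument simply observes that every $a\in\mathcal{A}$ satisfies $a\leq N$ and that $X\asymp_k N^{1/k}/\log N$ by the prime number theorem, which is exactly the comparison you carry out, just with the logarithmic bookkeeping (and the observation about why $\mu_0=2k$ alone would fail) written out explicitly.
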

\begin{proof}
    The proof follows since all $a\in\mathcal{A}$ satisfy $a\leq N$, and $X\asymp_k N^{1/k}/\log N$ by the prime number theorem.
\end{proof}

\subsection{Necessitating that $\eta$ is $k$th power-free}
Before applying the DHR sieve, we bound the number of $a\in\mathcal{A}$ that are divisible by a $k$th power after small prime factors have been sieved out. This will allow us to necessitate that $\eta$ in Theorem \ref{mainthmtech} is $k$th power-free. Similar to Erd\H{o}s and Rao's proof of Theorem \ref{erdraothm}, we use Proposition \ref{thueprop} to bound the number of solutions to an equation involving a binary form. However, note that one can not adapt Erd\H{o}s and Rao's proof directly, since we are considering a sparser set of $a\in\mathcal{A}$. In particular, we have to make careful use of the fact that the sieving process removes small prime factors in $\mathcal{A}$. 

\begin{proposition}\label{akvprop}
    Let $k\geq 2$ with $\mathcal{A}$ and $X$ as defined in \eqref{mainAdef} and \eqref{mainXeq} respectively. For $v\geq 2$ define
    \begin{equation*}
        \mathcal{S}_{k,v}=\{a\in\mathcal{A}:P^-(a)\geq X^{1/v},\:\exists q\in\PP\ \text{such that}\ q^k\mid a\}
    \end{equation*}
    to be the set of $a\in\mathcal{A}$ with prime factors larger than $X^{1/v}$ and divisible by a $k$th power. Then,
    \begin{equation}\label{Akvbound}
        |\mathcal{S}_{k,v}|\ll_{k,v} X^{1-\frac{1}{2kv}}.
    \end{equation}
\end{proposition}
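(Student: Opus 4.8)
The plan is to bound $|\mathcal{S}_{k,v}|$ by counting the underlying primes $p$. Each $a=(N-p^k)/Q\in\mathcal{S}_{k,v}$ comes from a prime $p<N^{1/k}$, and by definition there is a prime $q\in\mathcal{P}$ with $q^k\mid a$; since $P^-(a)\geq X^{1/v}$ and $q\mid a$, necessarily $q\geq X^{1/v}$. As $(q,Q)=1$, the divisibility $q^k\mid a$ is equivalent to $q^k\mid N-p^k$, so each bad $p$ satisfies $N=p^k+Qq^kb$ for some integer $b\geq 1$ and some prime $q\geq X^{1/v}$. Summing over the admissible $q$ (which overcounts the bad $p$, hence gives a valid upper bound) I would split the range of $q$ at a threshold $Y:=X^{1-1/(2kv)}$, treating small $q$ by a congruence count and large $q$ by a Thue-equation count.

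For the small range $X^{1/v}\leq q\leq Y$, I would fix $q$ and count primes $p<N^{1/k}$ with $p^k\equiv N\pmod{q^k}$. Since $q\geq X^{1/v}>k$ for large $N$, Lemma \ref{localoblem} together with $\gcd(k,q^{k-1})=1$ shows the congruence $x^k\equiv N\pmod{q^k}$ has at most $\gcd(k,q-1)\leq k$ solutions, so the number of admissible $p$ is at most $k(N^{1/k}/q^k+1)$. Summing over $q$, the main term is $\ll_k N^{1/k}\sum_{q\geq X^{1/v}}q^{-k}\ll_k N^{1/k}X^{-(k-1)/v}$, which since $N^{1/k}\asymp_k X\log X$ and $k\geq 2$ is $\ll X^{1-(k-1)/v}\log X\ll X^{1-1/(2kv)}$; the ``$+1$'' contribution is $\ll k\,\pi(Y)\ll X^{1-1/(2kv)}$. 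Thus the small range is acceptable.

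For the large range $q>Y$, writing $b=(N-p^k)/(Qq^k)$, the bound $q>Y$ forces $b<N/(QY^k)\ll_k X^{1/(2v)}(\log X)^k$, so $b$ ranges over a short interval. I would fix $b$ and count solutions of the binary equation $p^k+(Qb)q^k=N$ in primes $(p,q)$. This is a Thue equation of degree $k\geq 2$, and by the Bombieri--Schmidt bound (for $k\geq 3$) and Estermann's bound (for $k=2$)—exactly the inputs to Proposition \ref{thueprop}, now with the nontrivial but uniformly bounded coefficient $Qb$—it has $\ll_k N^{\varepsilon}$ solutions. Summing over the $\ll_k X^{1/(2v)}(\log X)^k$ values of $b$ yields $\ll_k X^{1/(2v)+\varepsilon}$, which is $\ll X^{1-1/(2kv)}$ for $v\geq 2$ and $\varepsilon$ small. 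Combining the two ranges gives \eqref{Akvbound}.

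The step I expect to be the main obstacle is the large-$q$ range: Proposition \ref{thueprop} is stated only for the form $x^k+y^k$, whereas here the coefficient $Qb$ is genuinely present, so one must invoke the Thue and binary-form bounds with a constant that is \emph{uniform} in this coefficient (and, for $k=2$, phrase the count as representations by the quadratic form $x^2+(Qb)y^2$). One also needs the harmless observation that $(p,q)=1$ in these solutions: if $p=q$ then $p^k\mid N$, contradicting $(p,N)=1$. Everything else reduces to elementary summation and the prime number theorem estimate $X\asymp_k N^{1/k}/\log N$.
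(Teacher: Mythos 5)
Your proof is correct in outline and its numerics check out, but in the large-$q$ range it takes a genuinely different---and more demanding---route than the paper. Both arguments begin identically: count the primes $p$ underlying bad $a$, sum over primes $q\geq X^{1/v}$ with $q^k\mid N-p^k$, split the $q$-range, and handle small $q$ via the at most $k$ solutions of $x^k\equiv N\pmod{q^k}$ (the paper splits at $N^{1/k}X^{-1/(kv)}$, you at $X^{1-1/(2kv)}$; both work). The difference is the large range. The paper uses the sieve condition $P^-(a)\geq X^{1/v}$ a \emph{second} time: writing $N-p^k=Qbq^k$ with $b=a/q^k$, either $b=1$ or $b\geq X^{1/v}$, and in the large range the latter forces $N-p^k\geq N$, a contradiction; hence $b=1$ and the count reduces to the single equation $N=p^k+Qq^k$, to which Proposition \ref{thueprop} applies with a constant allowed to depend on $k$ (the paper in fact silently drops $Q$ at this point, a detail your write-up tracks more carefully). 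You instead allow all $b\ll_k X^{1/(2v)}(\log X)^k$ and invoke, for each $b$, a bound on solutions of $p^k+(Qb)q^k=N$ that is uniform in the coefficient $Qb$---a strictly stronger input than Proposition \ref{thueprop}. That input is true (the Bombieri--Schmidt bound is uniform in the coefficients of an irreducible form, and for $k=2$ the number of representations by $x^2+cy^2$ is $\ll N^{\varepsilon}$ uniformly in $c$), but it is not what the paper establishes, and one must additionally treat reducible forms such as $x^4+4y^4$; also, your phrase ``uniformly bounded coefficient'' is a slip, since $Qb$ grows with $N$---what you need, and do state correctly afterwards, is a solution count uniform \emph{in} the coefficient. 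The irony is that your own threshold already hands you the paper's trick: since $b=a/q^k$ and $P^-(a)\geq X^{1/v}$, either $b=1$ or $b\geq X^{1/v}$, while your estimate $b\ll_k X^{1/(2v)}(\log X)^k<X^{1/v}$ (for large $X$) rules out the second case. Inserting this one observation eliminates the summation over $b$ and the need for coefficient-uniformity, collapsing your argument onto the paper's.
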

\begin{remark}
    With more care, the exponent $1-1/2kv$ in \eqref{Akvbound} could be lowered. However, for our purposes any exponent strictly less than 1 is sufficient.
\end{remark}
\begin{proof}
    We have
    \begin{equation}\label{Akvineq}
        |\mathcal{S}_{k,v}|=\sum_{\substack{X^{1/v}<q<N^{1/k}\\ q\in\PP}}\sum_{\substack{a\in\mathcal{A}\\P^{-}(a)\geq X^{1/v}\\q^k\mid a}}1\leq \sum_{\substack{X^{1/v}<q<N^{1/k}\\ q\in\PP}}\sum_{\substack{p^k\equiv N\ \text{(mod}\ q^k\text{)}\\P^{-}(N-p^k)\geq X^{1/v}\\(p,N)=1}}1.
    \end{equation}
    We split the right-hand side of \eqref{Akvineq} into $S_1+S_2$, where
    \begin{align*}
        S_1=\sum_{\substack{X^{1/v}\leq q<N^{1/k}X^{-1/kv}\\ q\in\PP}}\:\sum_{\substack{p^k\equiv N\ \text{(mod}\ q^k\text{)}\\P^{-}(N-p^k)\geq X^{1/v}\\ (p,N)=1}}1\\
        S_2=\sum_{\substack{N^{1/k}X^{-1/kv}\leq q< N^{1/k}\\ q\in\PP}}\:\sum_{\substack{p^k\equiv N\ \text{(mod}\ q^k\text{)}\\P^{-}(N-p^k)\geq X^{1/v}\\ (p,N)=1}}1
    \end{align*}
    The congruence $p^k\equiv N\pmod{q^2}$ has at most $k$ solutions mod $q^2$. Thus, noting that $N^{1/k}\asymp_k X\log X$ by the prime number theorem,
    \begin{align*}
        S_1&\leq\sum_{X^{1/v}<q<N^{1/k}X^{-1/kv}}k\left(\frac{N^{1/k}}{q^k}+1\right)\\
        &\ll_{k,v} \frac{N^{1/k}}{X^{(k-1)/kv}}+X^{1-\frac{1}{kv}}\log X\\
        &\ll_{k,v} X^{1-\frac{(k-1)}{kv}}\log X+X^{1-\frac{1}{kv}}\log X\ll_{k,v} X^{1-\frac{1}{2kv}}.
    \end{align*}
    We now bound $S_2$. Here, $q\geq N^{1/k}X^{-1/v}$ and we are searching for solutions to
    \begin{equation*}
        p^k\equiv N\ \text{(mod}\ q^k\text{)}\implies N=p^k+mq^k
    \end{equation*}
    with $m\geq 1$ and $P^-(mq^k)>X^{1/v}$. We claim that only $m=1$ is possible. Suppose otherwise. Then $m\geq X^{1/v}$ and
    \begin{equation*}
        N-p^k=mq^k\geq X^{1/v}\left(N^{1/k}X^{-1/kv}\right)^k=N,
    \end{equation*}
    a contradiction. Hence, by Proposition \ref{thueprop}, we have for any $\varepsilon>0$
    \begin{equation*}
        S_2\leq\sum_{q<N^{1/k}}\:\sum_{\substack{(p,N)=(q,N)=1\\ N=p^k+q^k}}1\ll_k N^{\varepsilon}.
    \end{equation*}
    Setting $\varepsilon$ small enough so that $N^{\varepsilon}\ll X^{1-1/2kv}$ (e.g.\ $\varepsilon=1/2k$) finishes the proof.
\end{proof}

\subsection{Applying the DHR weighted sieve}
With all the conditions proved in the preceding sections, we now directly apply the DHR sieve and prove Theorem \ref{mainthmtech}.

\begin{proof}[Proof of Theorem \ref{mainthmtech}]
    To begin with, the case $k=1$ is covered by Chen and Li's theorems (Theorems \ref{chenthm} and \ref{lithm}) so it suffices to set $k\geq 2$.

    Let $\mathcal{A}$, $\mathcal{P}$ and $g(d)$ be as defined in \eqref{mainAdef}, \eqref{mainPdef} and \eqref{gdeqproof} respectively. By Propositions \ref{omegaprop} and \ref{rdprop}, and Lemmas \ref{q0lem} and \ref{m0lem}, we have that \eqref{omegacon}, \eqref{r0con}, \eqref{q0con} and \eqref{m0con} are satisfied with 
    \begin{equation}\label{kappataumu}
        \kappa=d(k),\ \tau=1/2, \ \text{and}\ \mu_0=2k+\varepsilon 
    \end{equation}
    for any choice of $\varepsilon>0$ and sufficiently large $N\geq N_{\varepsilon}$. Hence, by Lemmas \ref{DHRsieve} and \ref{Nklem}, we can take
    \begin{equation*}
        \widetilde{M}(k)=\left\lceil\widetilde{N}(\zeta,\xi)\right\rceil,
    \end{equation*}
    with $\widetilde{N}(\zeta,\xi)=\widetilde{N}(\kappa,\mu_0,\tau;\zeta,\xi)$ and the parameters $\zeta$ and $\xi$ as in Lemma \ref{Nklem}. More precisely, let
    \begin{equation*}
        \mathcal{A}_{k,v}=\{a\in\mathcal{A}:\Omega(a)\leq \widetilde{M}(k),\: P^-(a)\geq X^{1/v}\}
    \end{equation*}
    with $v=v(\zeta,\xi)$ as defined in \eqref{tauvtauu}. By \eqref{repasym} and Proposition \ref{omegaprop},
    \begin{align}
        |\mathcal{A}_{k,v}|\gg\  X\cdot \prod_{\substack{p<X^{1/v}\\ p\in\mathcal{P}}}\left(1-\frac{g(p)}{p}\right)\gg_{k,v}\frac{X}{(\log X)^{d(k)}}.\label{Xdkbound}
    \end{align}
    Then, by Proposition \ref{akvprop}, the number of $a\in\mathcal{A}_{k,v}$ divisible by a $k$th power is of smaller order than \eqref{Xdkbound} as $X\to\infty$. That is, for any $k\geq 2$, there are infinitely many $a\in\mathcal{A}$ which are $k$th power-free and have at most $\widetilde{M}(k)$ prime factors.
    
    We now describe our computations used to find suitable values of $\zeta$ and $\xi$ that minimise $\widetilde{N}$ and thus $\widetilde{M}(k)$. For further details, the code and tables are available on Github \cite{djsimoncode}.\\
    
    \textbf{Small values of $k$ ($2\leq k\leq 10)$:}
    We start with the case $k=2$ which corresponds to $\kappa=2$, $\tau=1/2$ and $\mu_0=4+\varepsilon$. In this setting, 
    \begin{align}\label{tildeNeq2}
        &\widetilde{N}(\zeta,\xi)=\\
        &\quad (1+\zeta)(4+\varepsilon)-3-(2+\varepsilon)\frac{\zeta}{\xi}+\left\{2+\zeta\left(1-\frac{f_\kappa(\xi)}{f_\kappa(\xi+\xi/\zeta-1)}\right)\right\}\log\left(\frac{\xi}{\zeta}\right)\notag,
    \end{align}
    where, for the purposes of computation, we set $\varepsilon=10^{-10}$. 
    
    The main computational difficulty comes from computing values of $f_{\kappa}(\xi)$ for $\xi\geq\beta_{2}$. Since $f_2(\xi)$ does not have a simple expression, we compute accurate upper bounds, denoted by $f^+_2(\xi)$, and lower bounds, denoted by $f_2^-(\xi)$. This way, an upper bound for $\widetilde{N}$ can be computed for any $\zeta$ and $\xi$ by noting that
    \begin{equation*}
        \frac{f_{2}(\xi)}{f_{2}(\xi+\xi/\zeta-1)}\geq\frac{f^{-}_{2}(\xi)}{f^{+}_{2}(\xi+\xi/\zeta-1)}.
    \end{equation*}
    In this framework, we begin by noting that the sifting limit $\beta_2=4.266\ldots$ is given to 20 decimal places in \cite{booker2016square}. Then, for $\xi\geq \beta_2$, we compute values for $f_2^+(\xi)$ and $f_2^-(\xi)$ using Booker and Browning's code \cite{bookercode}. In particular, for
    \begin{equation}\label{xiset}
        \xi\in\{4.27,4.28,\ldots, 10.33, 10.34\}
    \end{equation}
    we computed $f_2(\xi)$, setting $f_2(\xi)^{+}$ to be the value of $f_2(\xi)$ rounded up to 5 decimal places. Since $f_2^+(10.34)=1$ and $f_2$ increases monotonically towards 1 \cite[Theorem 6.1]{diamond2008higher}, we set $f_2^+(\xi)=1$ for all $\xi\geq 10.34$. A lower bound for $f_2(\xi)$ is then obtained by setting
    \begin{equation*}
        f_2^-(\xi)=f_2^+(\xi)-10^{-5}.
    \end{equation*}
    A final technical case to consider is when $4.27<\xi<10.34$ is not in the set \eqref{xiset}. However, this can be handled by again using the fact that $f_{\kappa}$ is increasing. For instance, if $4.27<\xi<4.28$ then
    \begin{equation*}
        f_2^{-}(4.27)<f_2(\xi)<f_2^+(4.28) 
    \end{equation*}
    so that it is valid to set $f_2^-(\xi)=f_2^-(4.27)$ and $f_2^+(\xi)=f_2^+(4.28)$.
    
    Equipped with these lower and upper bounds for $f_2$, we can compute an upper bound for $\widetilde{N}$ in \eqref{tildeNeq2}. Here, for ease of computation we restrict to $\zeta\geq 0.01$ and $\xi\geq 4.27$. To find the best values of $\zeta$ and $\xi$ we use the \emph{Minimize} function in \textit{Mathematica}$^{\text{\sffamily\textregistered}}$. To 2 decimal places, this yields $\zeta=0.49$ and $\xi=9.39$ which gives $\widetilde{N}(\kappa,\mu_0,\tau;\zeta,\xi)<7.88$ so that $\widetilde{M}(2)=8$ works.

    Repeating the same procedure for $3\leq k\leq 10$ yields the values of $\widetilde{M}(k)$ in Table~\ref{Mktable}. The corresponding values of $\zeta$, $\xi$ and $\widetilde{N}$ are then provided in Table \ref{table:params}.

    \def\arraystretch{1.4}
    \begin{table}[h]
    \centering
    \caption{Corresponding values of $\zeta$, $\xi$, $\widetilde{N}$ and $\widetilde{M}(k)$ for $2~\leq~k\leq~10$.}
    \label{table:params}
    \begin{tabular}{|c|c c c c|} 
    \hline
    $k$ & $\zeta$ & $\xi$ & $\widetilde{N}(\kappa,\mu_0,\tau;\zeta,\xi)$ & $\widetilde{M}(k)$ \\ 
    \hline
        2 & 0.49 & 5.25 & 7.88 & 8 \\
        3& 0.34 & 4.93 & 10.54 & 11 \\
        4& 0.37 & 7.57 & 16.24 & 17 \\
        5& 0.20 & 4.50 & 15.35 & 16 \\
        6& 0.32 & 9.97 & 24.87 & 25 \\
        7& 0.15 & 4.28 & 19.88 & 20 \\
        8& 0.24 & 9.66 & 29.88 & 30 \\
        9& 0.16 & 6.75 & 28.30 & 29 \\
        10& 0.19 & 9.38 & 34.65 & 35 \\ [1ex] 
    \hline
    \end{tabular}
    \end{table}

    \textbf{Large values of $k$ ($k>10$):} We now focus on $k>10$, with the aim of showing that $\widetilde{M}(k)=4k+1$ is valid for all even $k$ and $\widetilde{M}(k)=4k-1$ is valid for all odd $k$. To simplify the expression for $\widetilde{N}$, we set $\xi=\beta_{d(k)}$ so that $f_{d(k)}(\xi)=0$ (Lemma~\ref{largebetalem}). That is, with $\kappa$, $\tau$ and $\mu_0$ as in \eqref{kappataumu}, we have that \eqref{tildeNeq} reduces to
    \begin{equation}\label{tildeNeqbeta}
        \widetilde{N}(\zeta,\beta_{d(k)})=(1+\zeta)(2k+\varepsilon)-1-d(k)-\{2k+\varepsilon-d(k)\}\frac{\zeta}{\beta_{d(k)}}+\left\{d(k)+\zeta\right\}\log\left(\frac{\beta_{d(k)}}{\zeta}\right),
    \end{equation}
    where, as before, we set $\varepsilon=10^{-10}$. 

    For $10<k\leq 100$, we set $\zeta=0.2$ and directly compute \eqref{tildeNeqbeta}. Here, values of $\beta_{d(k)}$ are taken from \cite{bookertable}. For each such $k$, we find that $\widetilde{N}(0.5,\beta_{d(k)})\leq 4k<4k+1$ for even $k$ and $\widetilde{N}(0.5,\beta_{d(k)})\leq 3.1k<4k-1$ for odd $k$ as desired.

    For $k>100$ we set $\zeta=0.1$ and use the bounds in Lemmas \ref{largebetalem} and \ref{divisorlem} to bound $\widetilde{N}(\zeta,\beta_{d(k)})/k$ above by
    \begin{align}\label{Ntildeonk}
        &\frac{1.1}{k}(2k+\varepsilon)-\frac{3}{k}-\{2k+\varepsilon+d^+(k)\}\frac{0.1}{3.75k\:d^+(k)}+\frac{d^+(k)+0.1}{k}\log\left(\frac{3.75\: d^+(k)}{0.1}\right)\notag\\
        &\quad<\frac{1.1}{k}(2k+\varepsilon)+\frac{d^+(k)+0.1}{k}\log\left(37.5\: d^+(k)\right)
    \end{align}
    with $d^+(k)=k^{1.07/\log\log k}$. Now, \eqref{Ntildeonk} is decreasing for $k>100$ and is less than 3.94 in this range, so that we can set $\widetilde{M}(k)=\lceil3.94k\rceil<4k-1$ as desired.\\

    \textbf{Asymptotic value for $\widetilde{M}(k)$}: Finally we prove the asymptotic expressions \eqref{unconasym2} and \eqref{ehasym2} for $\widetilde{M}(k)$. To do so, we again let $\xi=\beta_{\kappa}$ so that $f_{\kappa}(\xi)=0$. Removing all the negative terms in the definition \eqref{tildeNeq} of $\widetilde{N}$, and substituting $\kappa=d(k)$ gives
    \begin{equation}\label{asymNtilde}
        \widetilde{N}(d(k),\mu_0,\tau;\zeta,\beta_{d(k)})<(1+\zeta)\mu_0+\{d(k)+\zeta\}\log\left(\frac{\beta_{d(k)}}{\zeta}\right).
    \end{equation}
    Now, for any $\varepsilon>0$, we can set $\mu_0=2k+\varepsilon$ (Lemma \ref{m0lem}), $d(k)\ll k^{\varepsilon}$ (Lemma \ref{divisorlem}) and $\beta_{d(k)}\ll d(k)$ (Lemma \ref{largebetalem}), so that
    \begin{equation*}
        \widetilde{N}(d(k),\mu_0,\tau;\zeta,\beta_{d(k)})=(1+\zeta)(2k+\varepsilon)+o_{\zeta}(k).
    \end{equation*}
    Since $\zeta>0$ is a free parameter, a suitable rescaling of $\varepsilon$ (in terms of $\zeta$) yields \eqref{unconasym2}. 

    We now assume the Elliott--Halberstam conjecture and prove \eqref{ehasym2}. Under this assumption, we can set $\tau=1-\delta$ for some arbitrary $\delta>0$ by Lemma \ref{bomvinthm2}. This means that $\mu_0$, appearing in the condition \eqref{m0con}, can be set as
    \begin{equation}\label{mu0eh}
        \mu_0=\frac{k+\delta}{1-\delta}
    \end{equation}
    for sufficiently large $N$. Substituting \eqref{mu0eh} into \eqref{asymNtilde} yields \eqref{ehasym2}, with $\varepsilon$ depending on $\zeta$ and $\delta$.
\end{proof}

\section{Proof of Theorem \ref{mainthmsimple}}\label{simplesect}
In this section we prove Theorem \ref{mainthmsimple}. This will be done using the following result, which is an immediate corollary of Theorem \ref{mainthmtech}.
\begin{corollary}\label{tildeMcor}
    Let $k\geq 1$ and $\widetilde{M}(k)$ be as in Theorem \ref{mainthmtech}. Then, every sufficiently large $N$ can be expressed as
    \begin{equation*}
        N=p^k+\eta
    \end{equation*}
    where $p$ is prime and $\eta$ has at most
    \begin{equation}\label{mtildeeq}
        \widetilde{M}(k)+\sum_{\substack{m\in\{1,2\} \\ 2^{m-1}\mid k}}1+\sum_{\substack{m\geq 3\\ 2^{m-2}\mid k}}1+\sum_{\substack{q\in\mathbb{P}\setminus\{2\}\\
        m\geq 1\\ q^{m-1}(q-1)\mid k}}1
    \end{equation}
    prime factors. Here, $\mathbb{P}$ denotes the set of all primes.
\end{corollary}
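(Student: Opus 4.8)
The plan is to deduce the corollary directly from Theorem~\ref{mainthmtech} by reabsorbing the forced prime-power factors $q_1^{m_1}\cdots q_{\ell_k}^{m_{\ell_k}}$ back into $\eta$, and then bounding the resulting number of prime factors uniformly in $N$. Concretely, Theorem~\ref{mainthmtech} furnishes, for all sufficiently large $N$, a prime $p$ and a representation $N=p^k+q_1^{m_1}\cdots q_{\ell_k}^{m_{\ell_k}}\cdot\eta_0$ with $\eta_0>0$ satisfying $\Omega(\eta_0)\leq\widetilde{M}(k)$. Setting $\eta=N-p^k=q_1^{m_1}\cdots q_{\ell_k}^{m_{\ell_k}}\cdot\eta_0$ and counting prime factors with multiplicity gives
\[
\Omega(\eta)=\sum_{i=1}^{\ell_k}m_i+\Omega(\eta_0)\leq\sum_{i=1}^{\ell_k}m_i+\widetilde{M}(k),
\]
since the $q_i$ are distinct primes.

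First I would eliminate the dependence on $N$. By construction each $m_i$ is at most $n_i$, the largest admissible exponent defined in Section~\ref{setupsect} via \eqref{nicon1}--\eqref{nicon2}: indeed $m_i$ is the largest integer for which both $N\equiv 1\pmod{q_i^{m_i}}$ and the relevant divisibility condition hold, whereas $n_i$ retains only the divisibility condition. Hence $\sum_i m_i\leq\sum_i n_i$, and it remains to identify $\widetilde{M}(k)+\sum_i n_i$ with the explicit quantity in \eqref{mtildeeq}.

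The substantive step is therefore the combinatorial identity asserting that $\sum_{i=1}^{\ell_k}n_i$ matches the combined contribution of the three prime-counting sums in \eqref{mtildeeq}. For each odd prime $q_i$ the defining condition \eqref{nicon1} reads $q_i^{m-1}(q_i-1)\mid k$, which is nested in $m$ (if it holds for some $m\geq 2$ then it holds for $m-1$); consequently $n_i$ equals exactly the number of integers $m\geq 1$ with $q_i^{m-1}(q_i-1)\mid k$. Summing over $i$, and observing that an odd prime $q$ with $q-1\nmid k$ already fails the $m=1$ condition and so contributes nothing, shows that $\sum_{i:\,q_i\neq 2}n_i$ coincides with the last sum in \eqref{mtildeeq} taken over all odd primes. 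For $q_1=2$ the admissibility conditions split into the regimes $m\leq 2$ (requiring $2^{m-1}\mid k$) and $m\geq 3$ (requiring $2^{m-2}\mid k$, which forces $k$ even); the same nesting argument shows that $n_1$ equals the number of such $m$, which is precisely the first two sums in \eqref{mtildeeq}.

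I expect the only point needing genuine care to be the bookkeeping for the prime $2$, where the two-regime definition of $n_1$ must be matched against the two separate sums over $m\in\{1,2\}$ and $m\geq 3$. Writing $k=2^a u$ with $u$ odd makes this transparent: a short case analysis yields $n_1=1$ when $k$ is odd and $n_1=a+2$ when $k$ is even, and one checks directly that the two relevant sums in \eqref{mtildeeq} return the same values. Every remaining step is an immediate consequence of Theorem~\ref{mainthmtech} and the definitions in Section~\ref{setupsect}, so no further analytic input is required.
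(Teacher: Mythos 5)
Your proof is correct and follows exactly the route the paper intends (the paper states Corollary \ref{tildeMcor} without proof, calling it immediate from Theorem \ref{mainthmtech}): absorb the forced factor $q_1^{m_1}\cdots q_{\ell_k}^{m_{\ell_k}}$ into $\eta$, bound each $m_i$ by its maximal admissible value $n_i$, and verify that $\sum_i n_i$ is counted by the three sums in \eqref{mtildeeq}, with the two-regime bookkeeping at $q_1=2$ handled as you describe. Nothing further is needed.
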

For fixed values of $k$, one can use Corollary \ref{tildeMcor} to compute valid values of $M(k)$ in Theorem \ref{mainthmsimple}. For $1\leq k\leq 10$ this is done in Table \ref{Mktable}. Notably, for odd $k$, one can simply set $M(k)=\widetilde{M}(k)+1$.
\begin{proof}[Proof of Theorem \ref{mainthmsimple}]
    If $k\geq 3$ is odd, then we can set $M(k)=\widetilde{M}(k)+1=4k$ by Corollary \ref{tildeMcor}. The asymptotic results \eqref{unconasym} and \eqref{ehasym} also follow similarly (for odd $k$). So, we assume that $k$ is even and bound each sum in \eqref{mtildeeq}.

    For even $2\leq k\leq 10$, we manually compute \eqref{mtildeeq}, giving the values $M(k)\leq 6k$ that appear in Table \ref{Mktable}. For $k>10$, we first let
    \begin{equation*}
        S(k):=\sum_{\substack{m\in\{1,2\} \\ 2^{m-1}\mid k}}1+\sum_{\substack{m\geq 3\\ 2^{m-2}\mid k}}1+\sum_{\substack{q\in\mathbb{P}\setminus\{2\}\\
        m\geq 1\\ q^{m-1}(q-1)\mid k}}1
    \end{equation*}
    be the three sums in \eqref{mtildeeq}. If $10<k\leq 100$, a short computation gives
    \begin{equation*}
        S(k)\leq 2k-1,
    \end{equation*}
    so that we can set $M(k)=6k$ for each even $k\leq 100$. Now suppose that $k>100$. To begin with,
    \begin{equation}\label{sumbound1}
        \sum_{\substack{m\in\{1,2\} \\ 2^{m-1}\mid k}}1\leq 2.
    \end{equation}
    Then,
    \begin{equation}\label{sumbound2}
        \sum_{\substack{m\geq 3\\ 2^{m-2}\mid k}}1\leq\frac{\log k}{\log 2}.
    \end{equation}
    For the third sum in $S(k)$, note that we are summing over $q$ such that $q-1\mid k$. The number of such $q$ is less than $d(k)$, the number of divisors of $k$. Hence,
    \begin{equation}\label{sumbound3}
        \sum_{\substack{q\in\mathbb{P}\setminus\{2\}\\ m\geq 1\\ q^{m-1}(q-1)\mid k}}1\leq d(k)\sum_{1\leq m\leq (\log k/\log 3)+1}1\leq d(k)\left(\frac{\log k}{\log 3}+1\right).
    \end{equation}
    Combining \eqref{sumbound1}, \eqref{sumbound2}, \eqref{sumbound3} and Lemma \ref{divisorlem}, we have
    \begin{equation}\label{sbound}
        S(k)\leq 2+\frac{\log k}{\log 2}+k^{1.07/\log\log k}\left(\frac{\log k}{\log 3}+1\right).
    \end{equation}
    From \eqref{sbound} we see that $S(k)=o(k)$ so that the asymptotic expressions \eqref{unconasym} and \eqref{ehasym} for $M(k)$ follow from the analogous expressions \eqref{unconasym2} and \eqref{ehasym2} for $\widetilde{M}(k)$. Moreover, we see that for $k>100$, $S(k)<1.4k<2k$ so that
    \begin{equation*}
        M(k)=6k
    \end{equation*}
    is valid for \emph{all} even $k\geq 2$ as required.
\end{proof}

\section{Further discussion and conjectures}\label{furthersect}

\subsection{A conjectural value for $\widetilde{M}(k)$}
Here we attempt to conjecture (within reason) a better value for $\widetilde{M}(k)$ in Theorem \ref{mainthmtech}, thereby giving an indication of how far our results may be improved. When $k=1$, extensive computational evidence (e.g.\ \cite{oliveira2014empirical}) suggests that $\widetilde{M}(1)=1$ is possible. For $k\geq 2$, an optimal value of $\widetilde{M}(k)$ is less clear. One natural extension of Goldbach's conjecture would be to guess that $\widetilde{M}(k)=k$ is valid for large enough $N$, which is ``just" better than our conditional expression \eqref{ehasym2} for large $k$. In fact we do conjecture this to be the case.

\begin{conjecture}\label{kconj}
    One can take $\widetilde{M}(k)=k$ for all $k\geq 1$ in Theorem \ref{mainthmtech}. 
\end{conjecture}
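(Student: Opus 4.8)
The plan is to work inside the same DHR-sieve framework used for Theorem \ref{mainthmtech} and first pin down exactly which losses separate the conditional bound $\widetilde{M}(k)=(1+\varepsilon)k$ from the conjectured $\widetilde{M}(k)=k$. The first step is to push the level of distribution as high as possible: under the Elliott--Halberstam conjecture one takes $\tau=1-\delta$, so that $\mu_0=(k+\delta)/(1-\delta)\to k$ as $\delta\to 0$ (Lemma \ref{m0lem} and \eqref{mu0eh}). The leading term of $\widetilde{N}$ in \eqref{asymNtilde} is $(1+\zeta)\mu_0$, which tends to $k$ as $\zeta,\delta\to 0$, while the residual term $\{d(k)+\zeta\}\log(\beta_{d(k)}/\zeta)$ is $o(k)$ because $d(k)\ll k^{\varepsilon}$ (Lemma \ref{divisorlem}) and $\beta_{d(k)}\ll d(k)$ (Lemma \ref{largebetalem}). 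The second step is therefore to extinguish this $o(k)$ overhead altogether: one would choose $\zeta$ as a function of $k$ optimally and replace the crude sifting estimate $2\kappa<\beta_{\kappa}<3.75\kappa$ by sharper bounds on $\beta_{d(k)}$ and $f_{d(k)}$, exploiting that the sieve dimension $d(k)$ is typically far smaller than $k$. Even carried out perfectly, this only yields $\widetilde{M}(k)=k+o(k)$, not exactly $k$.

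The genuine obstacle enters at the third step. No tuning of a lower-bound sieve can reach $\widetilde{M}(k)=k$, because the sifting limit obeys $\beta_{\kappa}>2\kappa$, which forces a bounded overcount and makes it impossible to descend to the expected number of prime factors by sieving alone. This is the parity problem, and it is not an artefact of the DHR sieve: for $k=1$ the assertion $\widetilde{M}(1)=1$ is Goldbach's conjecture (for even $N$) together with the statement $N=p+2p'$ (for odd $N$), both beyond all present methods for precisely this reason. I would therefore abandon the lower bound in favour of an asymptotic formula. The Hardy--Littlewood circle method predicts that the number of $p<N^{1/k}$ for which $(N-p^k)/Q$ is prime is $\sim\mathfrak{S}(N)\,N^{1/k}/(\log N)^{2}$, where the singular series $\mathfrak{S}(N)$ is positive exactly under the congruence hypotheses \eqref{qcon1}--\eqref{qcon2} of Theorem \ref{mainthmtech} --- this is the content of the Conjecture-H variant alluded to in Section \ref{furthersect}. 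Establishing that asymptotic for every sufficiently large $N$ would at once give the far stronger $\widetilde{M}(k)=1$, and hence the conjecture; so the crux of a circle-method route is proving the singular series is positive under \eqref{qcon1}--\eqref{qcon2} and then controlling the remainder.

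The main obstacle is thus twofold and, at present, insurmountable by known techniques. On the sieve side one meets the parity barrier; on the circle-method side the set $\{p^{k}:p<N^{1/k}\}$ is extremely sparse, with only $\sim N^{1/k}$ elements, so the minor arcs cannot be controlled for an individual $N$. Indeed Hua's theorem (Theorem \ref{huathm}) already extracts essentially everything the circle method can deliver here, namely a result for almost all $N$ with a density-zero exceptional set, and bridging the gap from \emph{almost all} $N$ to \emph{all} sufficiently large $N$ is exactly where Goldbach-type problems stall. A realistic intermediate target, rather than the full conjecture, is to combine Lichtman's sharpened Bombieri--Vinogradov input \cite{lichtman2023primes} with a hybrid Irving-type sieve \cite{irving2015almost} to drop the unconditional constant below $2$ and to remove the $\varepsilon$ in the conditional bound, thereby reaching $\widetilde{M}(k)=k+o(k)$ cleanly; the final descent to exactly $k$ --- let alone to the likely truth $\widetilde{M}(k)=1$ --- would require a genuinely new idea capable of circumventing parity.
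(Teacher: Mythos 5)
The statement you were asked to prove is a conjecture: the paper offers no proof of it, only supporting evidence, namely a computation verifying $\widetilde{M}(k)=k$ for $k\in\{2,3,4,5\}$ and all $N\leq 10^8$ (with persistent counterexamples appearing as soon as $\widetilde{M}(k)$ is lowered below $k$), together with Proposition \ref{cycloprop}, which proves the unconditional lower bound $\widetilde{M}(k)\geq d(k)$. Your proposal, appropriately, does not claim a proof either; your account of why the sieve route stalls at $(1+\varepsilon)k$ (the limit $\mu_0\to k$ under Elliott--Halberstam plus the parity barrier) and why the circle method cannot treat individual $N$ for the sparse sequence $\{p^k\}$ is broadly consistent with the paper's framing.

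However, one of your central claims is wrong, and it is refuted by the paper itself. You assert that the expected circle-method asymptotic for primes of the form $(N-p^k)/Q$ holds for every sufficiently large $N$ satisfying the congruence hypotheses \eqref{qcon1}--\eqref{qcon2}, that the singular series is positive ``exactly'' under those hypotheses, and that this would yield the ``far stronger'' and ``likely truth'' $\widetilde{M}(k)=1$. Proposition \ref{cycloprop} shows this is false for every $k\geq 2$: taking $N=M^k\in S_k$ (so that $Q=1$ and the congruence hypotheses hold vacuously), the factorization
\begin{equation*}
    N-p^k=\prod_{d\mid k}p^{\varphi(d)}\Phi_d\left(\frac{M}{p}\right)
\end{equation*}
forces $\Omega(N-p^k)\geq d(k)\geq 2$ for \emph{all} $p<N^{1/k}$, so $N-p^k$ is never prime and the asymptotic fails identically for infinitely many admissible $N$. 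The obstruction is algebraic ($N$ being a perfect $k$th power), not a congruence condition, and is therefore invisible to the singular series; this is precisely the point made in Section \ref{furthersect}, where Hardy--Littlewood's Conjecture H and its variant, Conjecture \ref{nonsquarecon}, must exclude square $N$ --- ``the crucial fact that $N$ must not be a square is often omitted.'' So $\widetilde{M}(k)=1$ is provably impossible for $k\geq 2$, and any conditional or heuristic route to Conjecture \ref{kconj} must build in the $k$th-power obstruction and explain why the truth should be exactly $k$ rather than something between $d(k)$ and $k$ --- a question your sketch does not address, and which the paper itself settles only by computation.
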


Corresponding conjectural values of $M(k)$ in Theorem \ref{mainthmsimple} can then be obtained using Corollary \ref{tildeMcor}. A particularly neat case of Conjecture \ref{kconj} is as follows.

\begin{conjecture}\label{kconj2}
    Let $k\geq 1$ be odd and $N$ be a sufficiently large even number. Then, $N$ can be expressed as the sum of a $k$th power of a prime and a number with at most $k$ prime factors. 
\end{conjecture}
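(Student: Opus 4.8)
The plan is to obtain Conjecture~\ref{kconj2} as the clean specialisation of Conjecture~\ref{kconj} to odd $k$ and even $N$, the point being that in this regime every forced prime factor in \eqref{secondNeq} disappears. First I would identify the set of forced primes $\mathcal{Q}_k=\{q:q-1\mid k\}$. Since $k$ is odd, every divisor of $k$ is odd, so $q-1\mid k$ forces $q-1$ to be odd and hence $q=2$; thus $\mathcal{Q}_k=\{2\}$ and $\ell_k=1$, and the only conceivable forced factor of $N-p^k$ is $2$.

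Next I would compute the exponent $m_1$ attached to $q_1=2$. By the definition in Theorem~\ref{mainthmtech}, $m_1$ is the largest integer with $N\equiv 1\pmod{2^{m_1}}$ subject to \eqref{qcon1}--\eqref{qcon2}; but $N$ is even, so $N\not\equiv 1\pmod 2$ and therefore $m_1=0$. Hence the product $q_1^{m_1}\cdots q_{\ell_k}^{m_{\ell_k}}$ equals $1$ and \eqref{secondNeq} collapses to $N=p^k+\eta$. (This is consistent with the fact that for the odd primes $p$ in our sieve set the difference $N-p^k$ is already odd, so no factor of $2$ can be forced.) Theorem~\ref{mainthmtech} then produces $\eta>0$ with at most $\widetilde{M}(k)$ prime factors, and granting Conjecture~\ref{kconj}, i.e.\ $\widetilde{M}(k)=k$, this is at most $k$ prime factors — exactly Conjecture~\ref{kconj2}. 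Equivalently, in Corollary~\ref{tildeMcor} the only nonzero correction for odd $k$ is the $m=1$ term of the first sum, a single forced factor of $2$; restricting $N$ to be even removes precisely this term and leaves the bound $\widetilde{M}(k)$.

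This reduction is pure bookkeeping, so the real obstacle is Conjecture~\ref{kconj} itself. Unconditionally the weighted DHR sieve gives only $\widetilde{M}(k)=4k-1$ for odd $k$, and even the Elliott--Halberstam-conditional estimate \eqref{ehasym2} yields $(1+\varepsilon)k$ rather than the sharp value $k$. Moreover the method cannot reach $k$ exactly even in an idealised limit: taking $\mu_0=k$ (the best value permitted by $\max_{a\in\mathcal{A}}|a|\le N$ as $\tau\to 1$) and even the impossible level $\tau=1$ in \eqref{asymNtilde}, then optimising over $\zeta$, the balance forces $\zeta\asymp d(k)/k$ and leaves $\widetilde{N}=k+O\!\left(d(k)\log k\right)$, strictly above $k$. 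Since a level of distribution as large as $x/(\log x)^B$ is already ruled out by Friedlander and Granville, no refinement of the present sieve — conditional or not — can close this gap. I therefore expect Conjecture~\ref{kconj2} to stand on the same footing as the $k=1$ Goldbach case $\widetilde{M}(1)=1$: provable only via a genuinely new, non-sieve input, and hence to remain conjectural.
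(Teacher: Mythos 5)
Your derivation is exactly how the paper positions Conjecture~\ref{kconj2}: it is the specialisation of Conjecture~\ref{kconj} to odd $k$ and even $N$, where $\mathcal{Q}_k=\{2\}$ (since $q-1\mid k$ with $k$ odd forces $q=2$) and $N$ even forces $m_1=0$, so every forced factor in \eqref{secondNeq} disappears and the bound $\widetilde{M}(k)=k$ applies directly; your bookkeeping, including that the only correction in Corollary~\ref{tildeMcor} for odd $k$ is the single $m=1$ term, is correct. You also rightly conclude that the statement has no proof in the paper and cannot have one by the present sieve machinery --- it stands or falls with Conjecture~\ref{kconj} itself.
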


Conjecture \ref{kconj} agrees well with computation. In particular, we found via a simple computation that for $k\in\{2,3,4,5\}$, every $N$ with $N_k\leq N\leq 10^8$ can be written in the form \eqref{secondNeq} with $\widetilde{M}(k)=k$, where $N_2=250$, $N_3=28$, $N_4=125522$ and $N_5=137876$. Although, if we lowered the value of $\widetilde{M}(k)$ any further, far more counterexamples for $N$ were found, which do not appear to taper off as $N$ grows. For example, the five largest odd $N\leq 10^8$ that cannot be expressed as $N=p^3+2\eta$ with $\Omega(\eta)\leq 2$ are
\begin{equation*}
    N=91\: 159\: 823,\ 97\: 544\: 663,\ 97\: 761\: 383,\ 98\: 674\: 883,\ 98\: 924\: 363.
\end{equation*}

However, besides computational evidence, there does not appear to be any obvious reason why $\widetilde{M}(k)<k$ is not possible. One cannot take $\widetilde{M}(k)$ too small though, as we demonstrate via the following proposition. 

\begin{proposition}\label{cycloprop}
    Fix $k\geq 2$ and let $S_k$ be the set of positive integers $N$ such that $N\not\equiv 1\pmod{q}$ for all primes $q$ with $q-1\mid k$. Then, there exists infinitely many $N\in S_k$ such that for all $p<N^{1/k}$
    \begin{equation*}
        \Omega(N-p^k)\geq d(k),
    \end{equation*}
    where $d(k)$ is the number of divisors of $k$.
\end{proposition}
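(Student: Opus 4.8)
The plan is to exploit the cyclotomic factorization of $N-p^k$ in the case where $N$ is itself a perfect $k$th power. Recall that $x^k-y^k=\prod_{d\mid k}\Phi_d(x,y)$, where $\Phi_d(x,y)=y^{\varphi(d)}\Phi_d(x/y)$ is the homogenized $d$th cyclotomic polynomial, and that there are exactly $d(k)$ divisors $d\mid k$. So if I set $N=a^k$, then for every prime $p<N^{1/k}=a$ I obtain $N-p^k=\prod_{d\mid k}\Phi_d(a,p)$, a product of $d(k)$ integer factors. Provided each factor exceeds $1$, this immediately gives $\Omega(N-p^k)=\sum_{d\mid k}\Omega(\Phi_d(a,p))\geq d(k)$, which is the desired bound.

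First I would choose $a$ so as to guarantee $N=a^k\in S_k$. Let $R=\prod_{q:\,q-1\mid k}q$ be the product of the finitely many primes $q$ with $q-1\mid k$, and insist that $R\mid a$. Then $q\mid a\mid N$ for each such $q$, so $N\equiv 0\not\equiv 1\pmod q$ and hence $N\in S_k$. Next, to bound each cyclotomic factor from below, I note that every primitive $d$th root of unity $\zeta$ satisfies $|a-\zeta p|\geq a-p$ by the reverse triangle inequality (since $|\zeta|=1$), so, using positivity of $\Phi_d(a,p)$ for $a>p$, we have $\Phi_d(a,p)=\prod_{\zeta}|a-\zeta p|\geq (a-p)^{\varphi(d)}$. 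Thus whenever $a-p\geq 2$ every factor satisfies $\Phi_d(a,p)\geq 2^{\varphi(d)}>1$, and the bound $\Omega(N-p^k)\geq d(k)$ follows for every prime $p\leq a-2$.

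The one genuine obstacle is the boundary prime $p=a-1$, for which $\Phi_1(a,p)=a-p=1$ contributes nothing to $\Omega$, leaving only $d(k)-1$ guaranteed factors. I would sidestep this by additionally requiring $a-1$ to be composite, so that $p=a-1$ is never prime and every prime $p<a$ automatically satisfies $p\leq a-2$, i.e.\ $a-p\geq 2$. It then remains to verify that infinitely many admissible $a$ exist: picking any prime $p_0\nmid R$ and taking $a=Rt$ with $t\equiv R^{-1}\pmod{p_0}$ forces $p_0\mid a-1$, so $a-1$ is composite as soon as $a-1>p_0$; these $t$ form an arithmetic progression, yielding infinitely many valid $a$, and hence infinitely many distinct $N=a^k\in S_k$ as $a\to\infty$. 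The main subtlety is thus concentrated entirely in the degenerate factor $\Phi_1(a,a-1)=1$; once the composite condition on $a-1$ eliminates that prime, the remaining estimates are routine.
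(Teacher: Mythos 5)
Your proposal is correct and is essentially the same argument as the paper's: set $N$ equal to the $k$th power of a multiple of $\prod_{q-1\mid k}q$, factor $N-p^k$ into the $d(k)$ homogenized cyclotomic factors, bound each factor below via the reverse triangle inequality, and rule out the degenerate prime $p=N^{1/k}-1$ by requiring $N^{1/k}-1$ to be composite (the paper appeals to the prime number theorem for infinitude of such bases, whereas your explicit congruence construction is a minor, equally valid variant).
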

\begin{proof}
    Let
    \begin{equation*}
        K=\prod_{\substack{q\ \text{prime}\\q-1\mid k}}q
    \end{equation*}
    and 
    \begin{equation*}
        \Phi_d(x)=\prod_{\substack{1\leq a<d\\(a,d)=1}}(x-\exp(2\pi ia/d))
    \end{equation*}
    denote the $d$th cyclotomic polynomial. Consider any $M\equiv 0\pmod{K}$ with $M-1$ not prime. Infinitely many $M$ exist of this form, for otherwise it would contradict the prime number theorem. For a choice of such $M$, let $N=M^k\in S_k$. Then, for any $p<M=N^{1/k}$,
    \begin{equation}
        N-p^k=p^k\left(\left(\frac{M}{p}\right)^k-1\right)=p^k\prod_{d\mid k}\Phi_d\left(\frac{M}{p}\right)=\prod_{d\mid k}p^{\varphi(d)}\Phi_d\left(\frac{M}{p}\right),\label{mkpkeq}
    \end{equation}
    where we have used the cyclotomic decomposition of $x^k-1$ \cite[Proposition 13.2.2]{ireland1990classical} and the elementary identity
    \begin{equation*}
        \sum_{d\mid k}\varphi(d)=k.
    \end{equation*}
    Since each $\Phi_d(x)$ has degree $\varphi(d)$ and $\Phi_d(x)\in\mathbb{Z}[x]$ \cite[p. 194]{ireland1990classical}, we then have
    \begin{equation*}
        p^{\varphi(d)}\Phi_d\left(\frac{M}{p}\right)\in\mathbb{Z}.
    \end{equation*}
    To conclude, it suffices to show that each factor in \eqref{mkpkeq} satisfies 
    \begin{equation*}
        \left| p^{\varphi(d)}\Phi_d\left(\frac{M}{p}\right)\right|>1.
    \end{equation*}
    This follows from the reverse triangle inequality:
    \begin{equation*}
        \left|p^{\varphi(d)}\Phi_d\left(\frac{M}{p}\right)\right|=\left|\prod_{\substack{1\leq a\leq d\\(a,d)=1}}\left(M-p\cdot \exp\left(\frac{2\pi i a}{d}\right)\right)\right|\geq\prod_{\substack{1\leq a\leq d\\(a,d)=1}}\left|M-p\right|>1
    \end{equation*}
    since $p<M$ and $M-1$ is not prime.
\end{proof}
\begin{remark}
    A similar argument could be used to prove an analogous statement for other residue classes of $N$. However, we only considered $N\in S_k$ here for simplicity as it corresponds to the case where $m_1=m_2=\cdots=m_{\ell_k}=0$ in Theorem \ref{mainthmtech}.
\end{remark}
Proposition \ref{cycloprop} shows that one always requires $\widetilde{M}(k)\geq d(k)$ in Theorem \ref{mainthmtech}. Note that for large $k$ this lower bound is far from our conjectured value $\widetilde{M}(k)=k$ since $d(k)=O(k^{\varepsilon})$ for any $\varepsilon>0$ (see Lemma \ref{divisorlem}).

\subsection{The case where $N$ is not a $k$th power}
As in the proof of Proposition \ref{cycloprop}, setting $N$ to be a $k$th power is a natural choice to increase the number of prime factors of $N^k-p^k$. Thus, if we restrict to $N$ not being a $k$th power, we expect that $\widetilde{M}(k)$ could be lowered. Certainly, one could attempt to rework our proof of Theorem \ref{mainthmtech} in this case. A key observation is that Proposition \ref{omegaprop} may be sharpened since if $N$ is not a $k$th power, then there is a non-zero proportion of primes $p$ for which $N$ is not a $k$th power residue mod $p$. This follows from the Chebotarev density theorem (or the weaker Frobenius density theorem). For example, if $k=2$ and $N$ not a square, then $N$ is only a quadratic residue mod $p$ for (asymptotically) $50\%$ of primes. Arguing as in the proof of Proposition \ref{omegaprop} then gives  
\begin{equation*}      
    \prod_{\substack{z_1\leq p<z_2\\ p\in\mathcal{P}}}\left(1-\frac{g(p)}{p}\right)^{-1}<\frac{\log z_2}{\log z_1}\left\{1+O_N\left(\frac{1}{\log z_1}\right)\right\},\qquad z_2>z_1\geq 2,
\end{equation*}
which suggests using a sieve of dimension $\kappa=1$. Problematically though, the error term arising from the Chebotarev density theorem depends on $N$ (see e.g.\ \cite{thorner2019unified}) so the DHR sieve can not be applied as in the proof of Theorem \ref{mainthmtech}.

Inspired by this observation, we make a further conjecture in the case when $k=2$ and $N$ is not a square. For simplicity we only consider $N\equiv0,2\pmod{6}$. That is, $N\not\equiv 1\pmod{2}$ and $N\not\equiv 1\pmod{3}$ so the coefficient of $\eta$ in \eqref{secondNeq} is 1.
\begin{conjecture}\label{nonsquarecon}
    Let $N>24476$ be a non-square such that $N\equiv 0,2\pmod{6}$. Then, $N$ can be written as
    \begin{equation}\label{Np2qeq}
        N=p^2+q,
    \end{equation}
    where $p$ and $q$ are primes.
\end{conjecture}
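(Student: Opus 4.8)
The plan is to attack the statement head-on by the Hardy--Littlewood circle method, seeking an asymptotic formula for the weighted representation count
\[
R(N)=\sum_{\substack{p<\sqrt N,\ q\ \mathrm{prime}\\ p^2+q=N}}(\log p)(\log q)=\int_0^1 f(\alpha)g(\alpha)e(-N\alpha)\,\mathrm d\alpha,
\]
where $e(x):=e^{2\pi i x}$, $f(\alpha)=\sum_{p<\sqrt N}(\log p)\,e(p^2\alpha)$ and $g(\alpha)=\sum_{q\le N}(\log q)\,e(q\alpha)$, and then showing $R(N)>0$ for \emph{every} $N>24476$ in the stated family. First I would dissect $[0,1]$ into major arcs $\mathfrak M$ around rationals $a/b$ of small denominator and the complementary minor arcs $\mathfrak m$, with the goal of proving $R(N)=\mathfrak S(N)\,\mathcal I(N)+(\text{error})$, where $\mathcal I(N)\asymp\sqrt N/(\log N)^2$ is the singular integral, and with an error strictly smaller than the main term for all such $N$ --- not merely on average.

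On the major arcs the standard analysis yields the main term $\mathfrak S(N)\,\mathcal I(N)$, where the singular series $\mathfrak S(N)=\prod_\ell\beta_\ell(N)$ is a product of local densities $\beta_\ell(N)$ counting solutions of $x^2+y\equiv N\pmod\ell$ with $(xy,\ell)=1$. The crucial input is that $\mathfrak S(N)$ is bounded below by a positive absolute constant for every $N\equiv 0,2\pmod 6$. Indeed, taking $x=p$ odd gives $x^2\equiv 1\pmod 2$ and $x^2\equiv 1\pmod 3$, so $y=N-x^2$ is a unit mod $2$ iff $N$ is even and a unit mod $3$ iff $N\not\equiv 1\pmod 3$; the hypothesis $N\equiv 0,2\pmod 6$ guarantees both, forcing $\beta_2(N),\beta_3(N)>0$. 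This is precisely the removal of the forced divisors discussed in Section \ref{kthressect}. For $\ell\ge 5$ the factors $\beta_\ell(N)$ differ from $1$ by a quantity summable over $\ell$, so the Euler product converges to a positive limit uniformly in $N$. The non-square hypothesis is used separately, to exclude the global factorisation obstruction of Proposition \ref{cycloprop}: were $N=M^2$ a square, $N-p^2=(M-p)(M+p)$ would be composite for all $p<M$, so barring squares is exactly what makes the predicted main term a genuine count of prime representations.

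The decisive step --- and the one I expect to be the main obstacle --- is to bound the minor-arc integral $\int_{\mathfrak m}|f(\alpha)g(\alpha)|\,\mathrm d\alpha$ by $o\!\left(\sqrt N/(\log N)^2\right)$ \emph{uniformly in} $N$. Here the problem is genuinely hard, and I believe it lies beyond the circle method and sieve theory as developed in this paper. The obstruction is the extreme sparsity of the main term, which has size only $N^{1/2}$: a Cauchy--Schwarz bound gives merely $\int_{\mathfrak m}|fg|\le\|f\|_2\|g\|_2\ll N^{3/4}/\log N$, overshooting the target by a factor $N^{1/4}$, and Vinogradov's pointwise estimate $\sup_{\mathfrak m}|g(\alpha)|\ll N(\log N)^{-A}$ combined with the Weyl savings for the square-of-prime sum $f$ still controls only the average over $N$. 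That average is exactly what yields Hua's Theorem \ref{huathm} (the almost-all form of this statement), but it leaves a possibly infinite exceptional set untouched. Moreover, because both summands must be genuinely prime, the parity obstruction forbids replacing one of them by a lower-bound sieve, so the $\Omega(\eta)\le 8$ output of Theorem \ref{mainthmtech} cannot be driven down to $\Omega(\eta)=1$ by sieve methods alone. A complete proof would therefore require either an unexpected pointwise minor-arc estimate for the mixed exponential sum $f(\alpha)g(\alpha)$ valid for every admissible $N$, or a genuinely new input outside the present toolkit; I regard securing such a uniform minor-arc bound as the essential obstruction to be overcome.
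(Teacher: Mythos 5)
You have not proved the statement, and you say so explicitly --- which is exactly the right outcome, because the statement is a \emph{conjecture}: the paper never proves it either. The paper's support for Conjecture \ref{nonsquarecon} consists of (i) the observation that Hua's theorem (Theorem \ref{huathm}) already yields the representation \eqref{Np2qeq} for almost all $N\equiv 0,2\pmod 6$, (ii) the necessity of the non-square hypothesis, made precise by Proposition \ref{cycloprop} and the factorisation obstruction, and (iii) a direct computation verifying the conjecture for all $N\le 10^8$, which is also the only source of the explicit threshold $N>24476$. Your circle-method analysis reconstructs (i) and (ii) faithfully: the positivity of the local factors at $2$ and $3$ under $N\equiv 0,2\pmod 6$ matches the discussion of forced divisors in Section \ref{kthressect}, and your identification of a uniform minor-arc bound as the essential obstruction --- Cauchy--Schwarz overshooting by $N^{1/4}$, average-over-$N$ savings giving only Hua's almost-all statement with a possibly infinite exceptional set, and parity blocking any sieve-theoretic reduction of $\Omega(\eta)$ to $1$ --- is a sound diagnosis of why this remains open and why the paper states it as a conjecture rather than a theorem. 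What your route cannot supply, and the paper's computation does, is the explicit constant $24476$: no major/minor-arc heuristic locates where the finitely many exceptions stop, so any treatment of the statement as written must include the finite verification.

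Two small inaccuracies, neither load-bearing. First, with the logarithmic weights you chose, the expected main term satisfies $\mathfrak{S}(N)\,\mathcal{I}(N)\asymp \sqrt{N}$, not $\sqrt{N}/(\log N)^2$; the factor $(\log N)^{-2}$ belongs to the unweighted representation count. Second, your claim that for $N=M^2$ the number $N-p^2=(M-p)(M+p)$ is ``composite for all $p<M$'' fails at $p=M-1$, where $N-p^2=2M-1$ may well be prime; this edge case is precisely why Proposition \ref{cycloprop} imposes that $M-1$ be composite.
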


By Hua's theorem (Theorem \ref{huathm}) almost all $N\equiv 0,2\pmod{6}$ can be represented as \eqref{Np2qeq}. A non-explicit version of Conjecture \ref{nonsquarecon} has also previously been conjectured in the literature \cite{kumchev2009sums,penevarecent}. However, the crucial fact that $N$ must not be a square is often omitted.

Note that Conjecture \ref{nonsquarecon} is a strengthening of Hardy and Littlewood's ``Conjecture H" for $N\equiv 0,2\pmod{6}$. Namely, in \cite{hardy1923some}, Hardy and Littlewood conjectured that every large non-square integer $N$ can be written as 
\begin{equation}\label{hardylittleeq}
    N=n^2+q
\end{equation}
for some \emph{integer} $n$ and prime $q$. 

Using a simple computation, we verified Conjecture \ref{nonsquarecon} for $N\leq 10^8$. This computation is also how we arrived at the lower bound $N>24476$. We remark that an analogous computation suggests an optimal lower bound of $N>21679$ for Hardy and Littlewood's Conjecture H.

\section*{Acknowledgements}
We appreciate the fruitful discussions with Adrian Dudek, Michael Harm, Bryce Kerr, Igor Shparlinski, Valeriia Starichkova, Timothy Trudgian and John Voight during the composition of this work. We also thank the anonymous referees for their valuable feedback and suggestions. Both authors' research was supported by an Australian Government Research Training Program (RTP) Scholarship.

\printbibliography

\end{document}